\documentclass[11pt, reqno]{amsart}
\pdfoutput=1

\usepackage[UKenglish]{babel}
\usepackage{enumitem}
\usepackage{amsmath, amssymb}
\usepackage{tikz-cd}
\usepackage{mathtools}
\usepackage{graphicx}
\usepackage{mathrsfs}
\usepackage{stmaryrd}
\usepackage{bm}
\usepackage{mathrsfs}
\usepackage[scr=euler,cal=dutchcal]{mathalfa}
\usepackage{stackengine}
\usepackage[hyphens]{url}
\usepackage[breaklinks=true]{hyperref}
\hypersetup{
    colorlinks = true,
    urlcolor = black,
    linkcolor= black,
    citecolor= black,
    filecolor= black,
}
\usepackage[normalem]{ulem}

\providecommand{\noopsort}[1]{}

\newcommand{\rotateRPY}[4][0/0/0]
{   \pgfmathsetmacro{\rollangle}{#2}
    \pgfmathsetmacro{\pitchangle}{#3}
    \pgfmathsetmacro{\yawangle}{#4}

    \pgfmathsetmacro{\newxx}{cos(\yawangle)*cos(\pitchangle)}
    \pgfmathsetmacro{\newxy}{sin(\yawangle)*cos(\pitchangle)}
    \pgfmathsetmacro{\newxz}{-sin(\pitchangle)}
    \path (\newxx,\newxy,\newxz);
    \pgfgetlastxy{\nxx}{\nxy};

    \pgfmathsetmacro{\newyx}{cos(\yawangle)*sin(\pitchangle)*sin(\rollangle)-sin(\yawangle)*cos(\rollangle)}
    \pgfmathsetmacro{\newyy}{sin(\yawangle)*sin(\pitchangle)*sin(\rollangle)+ cos(\yawangle)*cos(\rollangle)}
    \pgfmathsetmacro{\newyz}{cos(\pitchangle)*sin(\rollangle)}
    \path (\newyx,\newyy,\newyz);
    \pgfgetlastxy{\nyx}{\nyy};

    \pgfmathsetmacro{\newzx}{cos(\yawangle)*sin(\pitchangle)*cos(\rollangle)+ sin(\yawangle)*sin(\rollangle)}
    \pgfmathsetmacro{\newzy}{sin(\yawangle)*sin(\pitchangle)*cos(\rollangle)-cos(\yawangle)*sin(\rollangle)}
    \pgfmathsetmacro{\newzz}{cos(\pitchangle)*cos(\rollangle)}
    \path (\newzx,\newzy,\newzz);
    \pgfgetlastxy{\nzx}{\nzy};

    \foreach \x/\y/\z in {#1}
    {   \pgfmathsetmacro{\transformedx}{\x*\newxx+\y*\newyx+\z*\newzx}
        \pgfmathsetmacro{\transformedy}{\x*\newxy+\y*\newyy+\z*\newzy}
        \pgfmathsetmacro{\transformedz}{\x*\newxz+\y*\newyz+\z*\newzz}

    }
}

\tikzset{RPY/.style={x={(\nxx,\nxy)},y={(\nyx,\nyy)},z={(\nzx,\nzy)}}}

\makeatletter
\setcounter{tocdepth}{2}

\renewcommand{\tocsection}[3]{%
  \indentlabel{\@ifnotempty{#2}{\bfseries\ignorespaces#1 #2\quad}}\bfseries#3}
\renewcommand{\tocsubsection}[3]{%
  \indentlabel{\@ifnotempty{#2}{\ignorespaces#1 #2\quad}}#3}

\newcommand\@dotsep{4.5}
\def\@tocline#1#2#3#4#5#6#7{\relax
  \ifnum #1>\c@tocdepth 
  \else
    \par \addpenalty\@secpenalty\addvspace{#2}%
    \begingroup \hyphenpenalty\@M
    \@ifempty{#4}{%
      \@tempdima\csname r@tocindent\number#1\endcsname\relax
    }{%
      \@tempdima#4\relax
    }%
    \parindent\z@ \leftskip#3\relax \advance\leftskip\@tempdima\relax
    \rightskip\@pnumwidth plus1em \parfillskip-\@pnumwidth
    #5\leavevmode\hskip-\@tempdima{#6}\nobreak
    \leaders\hbox{$\m@th\mkern \@dotsep mu\hbox{.}\mkern \@dotsep mu$}\hfill
    \nobreak
    \hbox to\@pnumwidth{\@tocpagenum{\ifnum#1=1\fi#7}}\par
    \nobreak
    \endgroup
  \fi}
\AtBeginDocument{%
\expandafter\renewcommand\csname r@tocindent0\endcsname{0pt}
}
\def\l@subsection{\@tocline{2}{0pt}{2.5pc}{5pc}{}}
\makeatother



\theoremstyle{plain}
\newtheorem{theorem}{Theorem}[section]
\newtheorem{lemma}[theorem]{Lemma}
\newtheorem{proposition}[theorem]{Proposition}
\newtheorem{corollary}[theorem]{Corollary}

\newtheorem*{claim*}{Claim}

\theoremstyle{definition}
\newtheorem{definition}[theorem]{Definition}
\newtheorem{remark}[theorem]{Remark}
\newtheorem{assumption}[theorem]{Assumption}
\newtheorem{example}[theorem]{Example}

\newcommand{\op}{\mathrm{op}} 
\newcommand{\colim}
{\operatornamewithlimits{colim}}
\DeclareMathOperator{\Lan}{Lan}
\DeclareMathOperator{\ind}{Ind} 
\DeclareMathOperator{\pro}{Pro} 
\newcommand{\Pk}{\mathbb{P}_k} 

\newcommand{\down}{{\downarrow}} 
\newcommand{\pit}{\pitchfork}
\newcommand{\sk}[1]{#1^{\bullet}} 
\newcommand{\N}{\mathbb{N}} 
\newcommand{\fp}{\mathrm{fp}} 
\newcommand{\fg}{\mathrm{fg}} 
\newcommand{\fin}{\mathrm{fin}} 
\newcommand{\R}{\mathscr{R}} 
\newcommand{\cvr}{\prec} 
\newcommand{\V}{\mathscr{V}} 
\newcommand{\CL}{\mathcal L} 
\renewcommand{\k}{\mathbf{k}} 
\newcommand{\woeq}{\text{\small{$\overset{w.o.}=$}}}

\newcommand{\w}{\widehat}
\newcommand{\yo}{\mathcal{y}}

\newcommand{\emb}{\rightarrowtail} 
\newcommand{\into}{\hookrightarrow} 
\newcommand{\epi}{\twoheadrightarrow} 
\newcommand{\id}{\mathrm{id}} 
\newcommand{\Q}{\mathscr{Q}} 
\newcommand{\M}{\mathscr{M}} 

\tikzcdset{
    relay arrow/.default = 10pt,
    relay arrow/.style = {
        rounded corners,
        to path = {
            \pgfextra{
                \def\sourcecoordinate{\pgfpointanchor{\tikztostart}{center}}
                \def\targetcoordinate{\pgfpointanchor{\tikztotarget}{center}}
                \pgfmathanglebetweenpoints{\sourcecoordinate}{\targetcoordinate}
                \edef\tempangle{\pgfmathresult}
                \pgftransformrotate{\tempangle}
                \pgfmathifthenelse{#1>0}{\tempangle+90}{\tempangle-90}
                \pgfcoordinate{tempcoord}{\pgfpointanchor{\tikztostart}{\pgfmathresult}}
            }
            (tempcoord)
            -- ([yshift=#1]tempcoord)
            -- ([yshift=#1]tempcoord-|\tikztotarget.center)\tikztonodes
            --(\tikztotarget)
        }
    }
}

\renewcommand{\epsilon}{\varepsilon}
\renewcommand{\theta}{\vartheta}
\renewcommand{\phi}{\varphi}

\DeclareMathOperator{\Set}{\mathbf{Set}} 
\DeclareMathOperator{\FinSet}{\mathbf{FinSet}} 
\DeclareMathOperator{\Stone}{\mathbf{Stone}} 
\DeclareMathOperator{\Boole}{\mathbf{Boole}} 
\DeclareMathOperator{\A}{\mathscr{A}} 
\DeclareMathOperator{\C}{\mathscr{C}} 
\DeclareMathOperator{\D}{\mathscr{D}} 
\DeclareMathOperator{\T}{\mathscr{T}} 
\newcommand{\EM}{\mathbf{EM}} 
\newcommand{\K}{\mathscr{K}} 


\title{Polyadic Sets and Homomorphism Counting}

\author{Luca Reggio}
\address{Department of Computer Science, University of Oxford, United Kingdom}
\email{luca.reggio@cs.ox.ac.uk}

\thanks{Research supported by the European Union's Horizon 2020 research and innovation programme under the Marie Sk{\l}odowska-Curie grant agreement No 837724, and partially supported by the European Research Council (ERC) under the European Union’s
Horizon 2020 research and innovation program (grant agreement No 670624).}


\begin{document}

\maketitle

\begin{abstract}
A classical result due to Lov\'{a}sz (1967) shows that the isomorphism type of a graph is determined by homomorphism counts. That is, graphs $G$ and $H$ are isomorphic whenever the number of homomorphisms $K\to G$ is the same as the number of homomorphisms $K\to H$ for all graphs $K$. Variants of this result, for various classes of finite structures, have been exploited in a wide range of research fields, including graph theory and finite model theory. 

We provide a categorical approach to homomorphism counting based on the concept of polyadic (finite) set. The latter is a special case of the notion of polyadic space introduced by Joyal (1971) and related, via duality, to Boolean hyperdoctrines in categorical logic. We also obtain new homomorphism counting results applicable to a number of infinite structures, such as finitely branching trees and profinite algebras.
\end{abstract}

\tableofcontents

\section{Introduction}
A celebrated result of Lov\'{a}sz~\cite{Lovasz1967} states that two (finite, directed) graphs $G$ and $H$ are isomorphic if and only if, for all graphs $K$, the number of graph homomorphisms $K\to G$ is the same as the number of graph homomorphisms $K\to H$.\footnote{Lov\'{a}sz proved this result, more generally, for finite $\sigma$-structures over a finite relational signature $\sigma$.} This is a prototypical example of homomorphism counting result. Extensions and variations of this theorem have been exploited in a broad range of research areas, such as graph theory, finite model theory and quantum information~\cite{dvovrak2010recognizing,grohe2020counting,manvcinska2020quantum}.

Categorical generalisations of Lov\'{a}sz' result were proved by Pultr~\cite{pultr1973isomorphism} and Isbell~\cite{isbell1991some}, cf.\ also~\cite{Lovasz1972}, utilising a direct generalisation of Lov\'{a}sz' combinatorial counting argument. A different approach, which is generalised in the present paper, was adopted in a recent joint work with Dawar and Jakl~\cite{DJR2021}. These results apply to a large class of \emph{locally finite} categories, i.e.\ categories with only finitely many morphisms between any two objects. Examples include graphs, finite groups and, more generally, finite members of any equationally defined class of universal algebras.

The aim of this article is two-fold. On the one hand, we show that homomorphism counting results are basically a consequence of a more general result about polyadic (finite) sets, a special case of Joyal's polyadic spaces~\cite{Joyal1971}. The latter are dual---in the sense of Stone duality for Boolean algebras~\cite{Stone1936}---to Boolean hyperdoctrines, a fundamental tool of categorical logic~\cite{Lawvere2006}; for more details, see~\cite{Marques2021}. This exposes a new connection between homomorphism counting, as studied in graph theory and related fields, and the structural methods of category theory and categorical logic.

On the other hand, we exploit the framework of polyadic sets, combined with a topological argument, to extend homomorphism counting results beyond categories of finite structures. We thus obtain new general results that apply, for example, to finitely branching trees and (topologically finitely generated) profinite algebras. An application to finite-variable logics in finite model theory, which relies on the notion of game comonad introduced by Abramsky, Dawar \emph{et al.}, is also presented.

The present paper is structured as follows. In Section~\ref{s:prelim} we recall some basic categorical definitions and facts, and introduce the notion of (left- and right-) combinatorial category, in which the isomorphism types of objects are determined by homomorphism counts. In Section~\ref{s:polyadic-sets} we study polyadic (finite) sets and prove some of their main properties. These results are used in Section~\ref{s:hom-count-finite} to establish homomorphism counting results for locally finite categories. In Section~\ref{s:beyond-loc-finite} we prove a general theorem characterising the isomorphism type of certain objects in categories that are not necessarily locally finite, and specialise it to the setting of locally finitely presentable categories. Several applications of these results are discussed in Section~\ref{s:examples}.

\section{Preliminaries and Basic Notions}\label{s:prelim}
Throughout the paper, we assume the reader is familiar with basic notions of category theory; standard references include~\cite{AHS1990, MacLane1998}.

Recall that a category $\A$ is \emph{well-powered} if, for each object $a\in \A$, the collection of subobjects of $a$ is a set (as opposed to a proper class), see e.g.\ \cite[Definition~7.82]{AHS1990}. Dually, $\A$ is \emph{well-copowered} if the opposite category $\A^\op$ is well-powered.

\begin{assumption}\label{assump:well-(co)powered}
All categories $\A$ under consideration are assumed to be locally small, well-powered and well-copowered.
\end{assumption}

Let $\Set$ be the category of sets and functions. Given a category $\A$, denote by $\w{\A}$ the category of presheaves over $\A$, i.e.\ the functor category $\Set^{\A^\op}$.
The Yoneda embedding 
\[
\yo^{\A}\colon \A \to \w{\A}
\] 
sends an object $a$ of $\A$ to the presheaf 
\[
\yo^{\A}_a\colon \A^\op \to \Set, \quad b\mapsto \yo^{\A}_a(b)\coloneqq \hom_{\A}(b,a).
\]
When the category $\A$ is clear from the context, we omit the superscript and simply write $\yo$ and $\yo_a$.
A well known consequence of the Yoneda Lemma (cf.\ e.g.\ \cite[Theorem~6.20]{AHS1990}) states that, for all objects $a,b\in\A$, 
\begin{equation}\label{eq:Yoneda-reflects-isos}
a\cong b \enspace \Longleftrightarrow \enspace \yo_a \cong \yo_b.
\end{equation}
That is, $a$ and $b$ are isomorphic (as objects of $\A$) precisely when the hom-functors $\yo_a$ and $\yo_b$ are \emph{naturally} isomorphic. 

In general, two functors $F,G\colon \A\to\mathscr{B}$ can be pointwise isomorphic (i.e., $F(a)\cong G(a)$ for all $a\in \A$) without being naturally isomorphic; we illustrate this point with an example due to Joyal \cite[Exemple~6]{Joyal1981}.

\begin{example}
Let $\A$ be the category of finite sets and bijections. Consider the functor $F\colon \A\to\A$ sending a finite set $a$ to its set of permutations. Given a morphism $f\colon a\to b$ in $\A$, $F(f)$ sends a permutation $\phi$ of $a$ to the permutation $f\circ \phi\circ f^{-1}$ of $b$. Further, let $G\colon \A\to\A$ be the functor which sends a finite set to the set of its linear orderings. If $f\colon a\to b$ is a morphism in $\A$, $G(f)$ sends a linear order $R\subseteq a\times a$ on $a$ to the linear order $\{(f(x),f(y))\mid (x,y)\in R\}$ on $b$.

The functors $F$ and $G$ are pointwise isomorphic because the number of permutations of an $n$-element set $a$ is $n!$, which is also the number of linear orders on $a$. However, they are not naturally isomorphic; just note that there is no natural way of assigning a linear order to the identity permutation.
\end{example}

Broadly speaking, a homomorphism counting result asserts that the existence of a natural isomorphism $\yo_a \cong \yo_b$ in equation~\eqref{eq:Yoneda-reflects-isos} can be replaced by the weaker condition that $\yo_a$ and $\yo_b$ are \emph{pointwise} isomorphic.
\begin{definition}
A category $\A$ is said to be \emph{right-combinatorial} if, for all objects $a,b\in \A$, 
\[
a\cong b \enspace \Longleftrightarrow \enspace \forall c\in \A \ (\yo_a(c) \cong \yo_b(c)).
\]
\end{definition}

Note that the isomorphisms on the right-hand side above are nothing but bijections between sets. Thus, the isomorphism type of an object $a$ of a right-combinatorial category $\A$ is completely determined by the cardinalities $|\hom_{\A}(c,a)|$ of the hom-sets for $c$ which varies in $\A$.

There is of course a dual notion of left-combinatorial category: 
\begin{definition}
A category $\A$ is \emph{left-combinatorial} provided that $\A^\op$ is right-combinatorial. A category that is both left- and right-combinatorial is called a \emph{combinatorial category}.
\end{definition}

Let us hasten to point out that the term \emph{combinatorial category} was coined by Pultr~\cite{pultr1973isomorphism} (and used, e.g., in~\cite{DJR2021}) to refer to what we call a right-combinatorial category. As both notions of left- and right-combinatorial category are interesting and useful, we opted for a nomenclature that distinguishes between these dual notions.

\section{Polyadic Sets}\label{s:polyadic-sets}

The aim of this section is to introduce the main tool of this paper: polyadic sets. In Section~\ref{s:amalgamation} we introduce the amalgamation property for presheaves. Polyadic sets, which are presheaves with the amalgamation property, are studied in Section~\ref{s:polyad-spaces-SK}, where the construction of the Stirling kernel is presented. In Section~\ref{s:pointwise-isomorphisms} we prove a result concerning Stirling kernels that will be used to establish homomorphism counting results in the subsequent sections.

\subsection{The amalgamation property}\label{s:amalgamation}
Let us say that a category $\A$ has the \emph{amalgamation property} if any span of morphisms in $\A$ can be completed to a commutative square:
\[\begin{tikzcd}
{\cdot} \arrow{r} \arrow{d} & {\cdot} \arrow[dashed]{d} \\
{\cdot} \arrow[dashed]{r} & {\cdot}
\end{tikzcd}\]
By extension, we say that a presheaf $F\colon \A^\op\to \Set$ has the amalgamation property if so does its \emph{category of elements} $\int F$. Recall that objects of $\int F$ are pairs $(a,x)$ with $a\in \A$ and $x\in F(a)$, and a morphism $(a,x)\to (a',x')$ is a morphism $f\colon a\to a'$ in $\A$ such that $F(f)(x')=x$ (see, e.g., \cite[\S I.5]{MM1994}). Unravelling the definition above, we see that a presheaf $F\colon \A^\op\to\Set$ has the amalgamation property if, for all spans of morphisms 
\[\begin{tikzcd}
b & {\cdot} \arrow{l}[swap]{g} \arrow{r}{f} & a
\end{tikzcd}\]
in $\A$ and elements $x\in F(a)$ and $y\in F(b)$ such that $F(f)(x)=F(g)(y)$, there exist a commutative square
\[\begin{tikzcd}
{\cdot} \arrow{r}{f} \arrow{d}[swap]{g} & a \arrow[dashed]{d}{g'} \\
b \arrow[dashed]{r}{f'} & c
\end{tikzcd}\]
in $\A$ and $z\in F(c)$ such that $F(g')(z)=x$ and $F(f')(z)=y$.

\begin{lemma}\label{l:repr-polyadic-set}
Any representable functor $\A^\op\to\Set$ has the amalgamation property.
\end{lemma}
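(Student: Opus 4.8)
The plan is to use the fact that the category of elements of a representable functor has a terminal object, together with the trivial observation that any category with a terminal object has the amalgamation property: given a span, complete it by the square whose lower-right corner is the terminal object and whose two new edges are the (unique) morphisms into it.

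In detail, suppose $F = \yo_d \colon \A^\op \to \Set$ for some fixed object $d \in \A$. Unwinding the definition recalled above, an object of $\int F$ is a pair $(a,x)$ with $x \colon a \to d$ a morphism of $\A$, and a morphism $(a,x)\to(a',x')$ is a morphism $f\colon a\to a'$ with $x'\circ f = x$; in other words, $\int F$ is (isomorphic to) the slice category $\A \down d$. The pair $(d,\id_d)$ is a terminal object of $\int F$, since for any $(a,x)$ the unique morphism $(a,x)\to(d,\id_d)$ is $x$ itself.

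Given a span $(b,y)\xleftarrow{g}(e,w)\xrightarrow{f}(a,x)$ in $\int F$ (so that $w = x\circ f = y\circ g$), I would then complete it by the square
\[\begin{tikzcd}
(e,w) \arrow{r}{f} \arrow{d}[swap]{g} & (a,x) \arrow{d}{x} \\
(b,y) \arrow{r}{y} & (d,\id_d)
\end{tikzcd}\]
which commutes because both composites $(e,w)\to(d,\id_d)$ equal $w$. Equivalently, in the unravelled formulation: for a span $b\xleftarrow{g}e\xrightarrow{f}a$ in $\A$ together with $x\in F(a)$ and $y\in F(b)$ such that $x\circ f = y\circ g$, one takes $c \coloneqq d$, $g'\coloneqq x$, $f'\coloneqq y$ and $z \coloneqq \id_d\in F(d)$; the required square commutes by hypothesis, and $F(g')(z)=x$, $F(f')(z)=y$ hold by construction.

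There is no real obstacle here: the content of the statement is simply that the identity morphism of the representing object amalgamates any compatible pair of elements. The only point worth making explicit is the identification of $\int \yo_d$ with the slice category $\A\down d$, from which the existence of a terminal object—and hence the amalgamation property—is immediate.
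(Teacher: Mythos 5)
Your proof is correct and is essentially the paper's own argument: the paper also completes the span using the elements $x,y$ themselves as the new legs of the square and takes $\id_d$ as the amalgamating element, and your observation that $\int\yo_d\cong\A\down d$ has a terminal object is just a clean conceptual packaging of that computation. The only (trivial) point you elide is that a representable functor is merely naturally \emph{isomorphic} to some $\yo_d$, so one should note, as the paper does, that the amalgamation property is invariant under natural isomorphism.
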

\begin{proof}
It suffices to show that, for an arbitrary object $a\in \A$, the functor $\yo_a\colon \A^\op\to\Set$ has the amalgamation property. The statement then follows because the amalgamation property for presheaves is preserved under natural isomorphisms.

Consider a span of morphisms
\[\begin{tikzcd}
b & {\cdot} \arrow{l}[swap]{g} \arrow{r}{f} & c
\end{tikzcd}\]
in $\A$ and let $h\in \yo_a(c)$ and $k\in \yo_a(b)$ satisfy $\yo_a(f)(h)=\yo_a(g)(k)$. The latter equation amounts to the commutativity of the following square:
\[\begin{tikzcd}
{\cdot} \arrow{r}{f} \arrow{d}[swap]{g} & c \arrow{d}{h} \\
b \arrow{r}{k} & a
\end{tikzcd}\]
Observe that the identity morphism $\id_a\in \yo_a(a)$ satisfies $\yo_a(h)(\id_a)=h$ and $\yo_a(k)(\id_a)=k$, and so $\yo_a$ has the amalgamation property.
\end{proof}

When the category $\A$ admits pushouts, the amalgamation property for presheaves $\A^\op\to\Set$ can be rephrased as follows. Recall that a commutative square
\[\begin{tikzcd}
u \arrow{r}{j'} \arrow{d}[swap]{i'} & s \arrow{d}{i} \\ 
t \arrow{r}{j} & {\cdot}
\end{tikzcd}\]
in $\Set$ is a \emph{quasi-pullback} if the unique mediating morphism $u\to p$, where $p$ is the pullback of $i$ along $j$, is a surjection. In other words, whenever $x\in s$ and $y\in t$ satisfy $i(x)=j(y)$, there is a (not necessarily unique) $z\in u$ such that $j'(z)=x$ and $i'(z)=y$. 

We have the following elementary result:
\begin{lemma}\label{l:amalg-quasi-pull}
Let $\A$ be a category with pushouts. The following statements are equivalent for any functor $F\colon \A^\op \to\Set$:
\begin{enumerate}
\item $F$ has the amalgamation property.
\item $F$ sends pushout squares in $\A$ to quasi-pullbacks in $\Set$.
\end{enumerate}
\end{lemma}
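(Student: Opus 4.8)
The plan is to establish the two implications separately, the bridge in both directions being the elementary translation between a pushout square in $\A$ and the commutative square it determines in $\Set$ after applying the contravariant functor $F$. Fix a pushout square
\[\begin{tikzcd}
d \arrow{r}{f} \arrow{d}[swap]{g} & a \arrow{d}{g'} \\
b \arrow{r}{f'} & c
\end{tikzcd}\]
in $\A$; applying $F$ yields the commutative square in $\Set$ with vertices $F(c), F(a), F(b), F(d)$, with arrows $F(g')\colon F(c)\to F(a)$ and $F(f')\colon F(c)\to F(b)$ out of the corner $F(c)$, and $F(f)\colon F(a)\to F(d)$, $F(g)\colon F(b)\to F(d)$ into the corner $F(d)$. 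By definition this square is a quasi-pullback precisely when, for all $x\in F(a)$ and $y\in F(b)$ with $F(f)(x)=F(g)(y)$, there is some $z\in F(c)$ with $F(g')(z)=x$ and $F(f')(z)=y$. Comparing with the unravelled form of the amalgamation property recalled above, one sees that (2) is exactly the special case of (1) in which the completing square is required to be the chosen pushout.

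This observation already gives (2) $\Rightarrow$ (1): given any span $b\xleftarrow{\,g\,}d\xrightarrow{\,f\,}a$ in $\A$ together with $x\in F(a)$, $y\in F(b)$ satisfying $F(f)(x)=F(g)(y)$, I would form the pushout of the span — which exists by the standing hypothesis on $\A$ — and apply (2) to the resulting pushout square to produce the desired element $z$ of $F$ of the pushout object, the pushout square itself serving as the required commutative completion.

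For (1) $\Rightarrow$ (2) I would fix the pushout square above and take $x\in F(a)$, $y\in F(b)$ with $F(f)(x)=F(g)(y)$. Applying the amalgamation property of $F$ to the span $b\xleftarrow{\,g\,}d\xrightarrow{\,f\,}a$ and these elements yields \emph{some} commutative square with apex $c'$ and legs $g''\colon a\to c'$, $f''\colon b\to c'$, together with $z'\in F(c')$ such that $F(g'')(z')=x$ and $F(f'')(z')=y$. The key step is then to invoke the universal property of the pushout: since $g''\circ f = f''\circ g$, there is a unique mediating morphism $u\colon c\to c'$ with $u\circ g'=g''$ and $u\circ f'=f''$. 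Setting $z\coloneqq F(u)(z')\in F(c)$ and using functoriality of $F$ gives $F(g')(z)=F(g'')(z')=x$ and $F(f')(z)=F(f'')(z')=y$, so the image of the pushout square under $F$ is a quasi-pullback.

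The argument is entirely routine; the only point deserving attention — and the closest thing to an obstacle — lies in the implication (1) $\Rightarrow$ (2), where one must not be content with the amalgam supplied directly by the amalgamation property but must transport its witness $z'$ along the comparison map $u$ so as to land in $F(c)$ for the \emph{given} pushout object $c$. The other minor care point is bookkeeping the variance of $F$, so that the cospan $F(a)\to F(d)\leftarrow F(b)$ sits at the pullback corner of the square in $\Set$.
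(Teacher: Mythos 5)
Your proof is correct, and it is the argument the paper has in mind: the paper states Lemma~\ref{l:amalg-quasi-pull} as an ``elementary result'' and omits the proof entirely. Both directions are handled properly, and you rightly identify the one non-trivial point, namely transporting the amalgamation witness $z'$ along the mediating morphism $u\colon c\to c'$ (using contravariance of $F$) so that it lands in $F(c)$ for the given pushout.
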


Note that, whenever $\A$ has pushouts, Lemma~\ref{l:repr-polyadic-set} can be deduced from Lemma~\ref{l:amalg-quasi-pull} and the fact that representable functors preserve limits. Further, in this case any presheaf $F\colon \A^\op\to\Set$ that is a filtered colimit of representable ones has the amalgamation property. Just recall that filtered colimits in $\Set$ commute with finite limits (see e.g.\ \cite[Theorem~1 p.~215]{MacLane1998}).

\subsection{Polyadic sets and Stirling kernels}\label{s:polyad-spaces-SK}

\begin{definition}\label{def:polyadic-set}
A \emph{polyadic set} on a category $\A$ is a functor \[F\colon \A^\op\to\Set\] with the amalgamation property. If, in addition, the sets $F(a)$ are finite for all $a\in\A$, then $F$ is called a \emph{polyadic finite set}. 
\end{definition}

Let $\FinSet$ denote the full subcategory of $\Set$ defined by the finite sets. Throughout this paper, we shall identify polyadic finite sets with functors ${\A^\op\to\FinSet}$ satisfying the amalgamation property.

Note that, with this terminology, Lemma~\ref{l:repr-polyadic-set} states that all representable presheaves are polyadic sets. 

\begin{remark}\label{rem:amalgam-prop-equiv}
Polyadic sets are a discrete variant of Joyal's notion of polyadic space~\cite{Joyal1971}, which we now recall. Let $\Stone$ denote the category of \emph{Stone spaces} (i.e., zero-dimensional compact Hausdorff spaces, also known as \emph{Boolean spaces}) and continuous maps. In~\cite{Joyal1971}, a \emph{polyadic space} (on $\FinSet$) is defined to be a functor 
\[
F\colon \FinSet^\op\to\Stone
\]
satisfying the following conditions:
\begin{enumerate}[label=(\roman*)]
\item $F$ sends pushout squares in $\FinSet$ to quasi-pullbacks\footnote{A commutative square in $\Stone$ is a quasi-pullback if its image under the underlying-set functor $\Stone\to \Set$ is a quasi-pullback in $\Set$.} in $\Stone$.
\item $F(f)$ is an open map for any morphism $f$ in $\FinSet$.
\end{enumerate}
The observation that condition~(i) above can be generalised in terms of the amalgamation property is due to Marqu\`{e}s~\cite{Marques2021}.

By Stone duality between $\Stone$ and the category $\Boole$ of Boolean algebras and their homomorphisms~\cite{Stone1936}, polyadic spaces can be identified with the pointwise duals of Boolean hyperdoctrines ${\FinSet\to \Boole}$. For a thorough account of this connection, we refer the interested reader to~\cite{Marques2021}.

Polyadic sets are obtained by dispensing with condition~(ii) above, which has a topological nature. In particular, note that a polyadic finite set (on $\FinSet$) is the same as a polyadic space $F\colon \FinSet^\op\to\Stone$ such that, for all $n\in \FinSet$, $F(n)$ is a finite (equivalently, discrete) space.
\end{remark}

For the remainder of this section, we fix an arbitrary category $\A$ equipped with a \emph{proper factorisation system} $(\Q,\M)$. That is, a weak factorisation system such that every $\Q$-morphism is an epimorphism and every $\M$-morphism is a monomorphism. For more details and some basic properties of these factorisation systems, see Appendix~\ref{s:fact-systems}. We refer to $\M$-morphisms as \emph{embeddings} and denote them by $\emb$. $\Q$-morphisms will be referred to as \emph{quotients} and denoted by $\epi$. A \emph{proper quotient} is a quotient that is not an isomorphism (in view of Lemma~\ref{l:factorisation-properties}\ref{isos} in the appendix, proper quotients can be identified with arrows in $\Q\setminus \M$).

The following easy observation will come in handy in the following.
\begin{lemma}\label{l:quot-inj-polyadic}
Any polyadic set $F\colon \A^\op\to\Set$ sends quotients in $\A$ to injections.
\end{lemma}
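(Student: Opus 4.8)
The plan is to read the conclusion straight off the amalgamation property, applied to the ``kernel-pair-shaped'' span obtained by pairing the given quotient with itself. Let $q\colon a\epi b$ be a quotient and suppose $x,y\in F(b)$ satisfy $F(q)(x)=F(q)(y)$; we must show $x=y$. First I would instantiate the amalgamation property of $F$ at the span
\[\begin{tikzcd}
b & a \arrow{l}[swap]{q} \arrow{r}{q} & b
\end{tikzcd}\]
whose apex is $a$ and both of whose legs are $q$. The compatibility condition required to amalgamate over this span is exactly the hypothesis $F(q)(x)=F(q)(y)$, so we obtain a commutative square
\[\begin{tikzcd}
a \arrow{r}{q} \arrow{d}[swap]{q} & b \arrow{d}{g'} \\
b \arrow{r}{f'} & c
\end{tikzcd}\]
in $\A$ together with an element $z\in F(c)$ such that $F(g')(z)=x$ and $F(f')(z)=y$.

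To conclude, I would note that commutativity of this square means $g'\circ q=f'\circ q$, and since $q$ is a $\Q$-morphism it is in particular an epimorphism, so $g'=f'$. Applying $F$ and evaluating at $z$ then gives $x=F(g')(z)=F(f')(z)=y$, which is the desired injectivity of $F(q)$.

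I do not expect any real obstacle: the only thing to spot is that the appropriate span to amalgamate is the one with both legs equal to $q$, so that an amalgamation datum over it is precisely a pair of elements of $F(b)$ with the same image under $F(q)$, after which the epicness of $q$ does the work. When $\A$ admits pushouts this is a special case of Lemma~\ref{l:amalg-quasi-pull}, since the pushout of $q$ along itself is an isomorphism (because $q$ is epi) and hence $F$ sends it to a quasi-pullback whose cospan has two coincident legs.
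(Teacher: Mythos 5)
Your proof is correct and follows exactly the same route as the paper's: amalgamate over the span whose two legs are both the quotient $q$, then use that $q$ is an epimorphism to force the two completing arrows to coincide. The closing remark about the pushout case is a nice extra observation but not needed.
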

\begin{proof}
Let $f\colon n\epi m$ be a quotient in $\A$ and let $s,t\in F(m)$ be arbitrary elements satisfying $F(f)(s)=F(f)(t)$. By the amalgamation property for $F$ there are a commutative diagram
\[\begin{tikzcd}
n \arrow[twoheadrightarrow]{r}{f} \arrow[twoheadrightarrow]{d}[swap]{f} & m \arrow[dashed]{d}{h}\\
m \arrow[dashed]{r}{g} & p
\end{tikzcd}\]
and an element $u\in F(p)$ such that $F(h)(u)=s$ and $F(g)(u)=t$. Since $g\circ f=h\circ f$ and $f$ is an epimorphism, we get $g=h$. Thus, $s=F(g)(u)=t$ and so $F(f)$ is injective.
\end{proof}

Next, we introduce the \emph{Stirling kernel} construction for polyadic sets. This notion is due to Joyal in the more general context of polyadic spaces\footnote{Private e-mail communication.} (see Par\'e's work~\cite{Pare1999} for a special case) and has been further explored by Marqu\`{e}s in~\cite{Marques2021}.

Before giving a formal definition, let us motivate the idea underlying Stirling kernels with an example. Consider the polyadic set
\[
\FinSet^\op \to \Set, \quad n\mapsto [0,1]^n
\]
obtained by restricting the representable functor $\yo_{[0,1]}\colon \Set^\op\to \Set$, where $[0,1]$ is the unit interval regarded as a set. Each (generalised) cube $[0,1]^n$ can be decomposed into smaller `pieces'. Some of these pieces will have maximal dimension, i.e.\ dimension $n$, while the others will have lower dimension. The decomposition of $[0,1]$ is the trivial one consisting of a single piece, namely $[0,1]$ itself (dimension~$1$). Moving one dimension up, the square $[0,1]^2$ can be decomposed into its diagonal (dimension~$1$) and two triangles (dimension~$2$):

  \[\begin{tikzpicture}
    \draw[fill=blue!10] (0.05,0) -- (2,0) -- (2,1.95);
     \draw[style=dotted] (2,1.95) -- (0.05,0);
     \draw[fill=blue!10] (0,0.05) -- (0,2) -- (1.95,2);
     \draw[style=dotted] (1.95,2) -- (0,0.05);
  \end{tikzpicture}\]

\noindent Similarly, $[0,1]^3$ can be decomposed into its diagonal, consisting of the points $(x,y,z)$ such that $x=y=z$, the six triangles depicted below consisting of the points where exactly two of the coordinates are equal,

\begin{equation*}
\begin{tikzpicture}\rotateRPY{0}{60}{0}
  \begin{scope}[RPY]
    \draw[fill=blue!10] (0,0.07,-0.07) -- (0,2,-2) -- (1.93,2,-2); 
    \draw[style=dotted] (1.93,2,-2) -- (0,0.07,-0.07);
        \draw[fill=blue!10] (0.07,0,0) -- (2,0,0) -- (2,1.93,-1.93); 
        \draw[style=dotted] (2,1.93,-1.93) -- (0.07,0,0);
    \draw[style=dashed]  (0,0,0) -- (2,0,0) -- (2,2,0) -- (0,2,0) -- cycle;
    \draw[style=dashed] (0,2,0) -- (0,2,-2) -- (2,2,-2) -- (2,2,0);
    \draw[style=dashed] (2,2,-2) -- (2,0,-2) -- (2,0,0);
    \draw[style=dashed] (0,0,0) -- (0,0,-2) -- (0,2,-2);
    \draw[style=dashed] (0,0,-2) -- (2,0,-2);
    \draw (0,0,0) node[below] {{\tiny(0,0,0)}};
    \draw (2,2,-2) node[above] {{\tiny(1,1,1)}};
    \end{scope}
  \end{tikzpicture}
  \ \ \ \ \ \ \
  \begin{tikzpicture}
    \draw[fill=blue!10] (0.07,0,-0.07) -- (2,0,-2) -- (2,1.93,-2); 
    \draw[style=dotted] (2,1.93,-2) -- (0.07,0,-0.07);
        \draw[fill=blue!10] (0,0.07,0) -- (0,2,0) -- (1.93,2,-1.93); 
        \draw[style=dotted] (1.93,2,-1.93) -- (0,0.07,0);
    \draw[style=dashed] (0,0,0) -- (2,0,0) -- (2,2,0) -- (0,2,0) -- cycle;
    \draw[style=dashed] (0,2,0) -- (0,2,-2) -- (2,2,-2) -- (2,2,0);
    \draw[style=dashed] (2,2,-2) -- (2,0,-2) -- (2,0,0);
    \draw[style=dashed] (0,0,0) -- (0,0,-2) -- (0,2,-2);
    \draw[style=dashed] (0,0,-2) -- (2,0,-2);
        \draw (0,0,0) node[below] {{\tiny(0,0,0)}};
    \draw (2,2,-2) node[above] {{\tiny(1,1,1)}};
  \end{tikzpicture}
  \ \ \ \ \ \ \
  \begin{tikzpicture}\rotateRPY{0}{60}{0}
  \begin{scope}[RPY]
    \draw[fill=blue!10] (0.07,0.07,0) -- (2,2,0) -- (2,2,-1.93); 
    \draw[style=dotted] (2,2,-1.93) -- (0.07,0.07,0);
        \draw[fill=blue!10] (0,0,-0.07) -- (0,0,-2) -- (1.93,1.93,-2); 
        \draw[style=dotted] (1.93,1.93,-2) -- (0,0,-0.07);
    \draw[style=dashed] (0,0,0) -- (2,0,0) -- (2,2,0) -- (0,2,0) -- cycle;
    \draw[style=dashed] (0,2,0) -- (0,2,-2) -- (2,2,-2) -- (2,2,0);
    \draw[style=dashed] (2,2,-2) -- (2,0,-2) -- (2,0,0);
    \draw[style=dashed] (0,0,0) -- (0,0,-2) -- (0,2,-2);
    \draw[style=dashed] (0,0,-2) -- (2,0,-2);
        \draw (0,0,0) node[below] {{\tiny(0,0,0)}};
    \draw (2,2,-2) node[above] {{\tiny(1,1,1)}};
    \end{scope}
  \end{tikzpicture}
  \end{equation*}
along with the remaining three-dimensional (convex) polyhedra. (All these decompositions are determined by the polyadic set in a canonical way.) The Stirling kernel of this polyadic set associates with a finite set $n$ the subset of $[0,1]^n$ defined by the `generic' points, i.e.\ those points that belong to a piece of maximal dimension in the decomposition; equivalently, the points whose coordinates are all distinct. The discussion above for $n\leq 3$ suggests that in the passage from the polyadic set to its Stirling kernel no essential information is lost: a cube $[0,1]^n$ can be fully reconstructed as long as we know the generic points of all cubes $[0,1]^m$ with~$m\leq n$.

\begin{definition}
Given a polyadic set $F\colon \A^\op\to\Set$ and an object $n\in\A$, we say that an element $x\in F(n)$ is \emph{degenerate} if there exist a proper quotient $f\colon n\epi m$ and an element $y\in F(m)$ such that $F(f)(y)=x$. The elements of $F(n)$ that are not degenerate are called \emph{generic}. The subset of $F(n)$ consisting of the generic elements is denoted by $\sk{F}(n)$. 
\end{definition}

If $g$ is a morphism in $\A$, then the function $F(g)$ need not preserve generic elements. However, this is the case when $g$ is an embedding. This is the content of the following lemma, where we denote by $\A_*$ the category with the same objects as $\A$ and morphisms the embeddings in $\A$. (The dual of the category $\A_*$ is denoted by $\A_*^\op$, in place of the unwieldy $(\A_*)^\op$.)

\begin{lemma}
Let $F\colon \A^\op\to \Set$ be a polyadic set. The assignment \[{n\mapsto \sk{F}(n)}\] yields a functor $\sk{F}\colon \A_{*}^\op\to \Set$.
\end{lemma}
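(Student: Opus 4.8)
The plan is to reduce everything to a single claim: for every embedding $g\colon m\emb n$ in $\A$, the map $F(g)\colon F(n)\to F(m)$ carries $\sk{F}(n)$ into $\sk{F}(m)$. Granting this, one sets $\sk{F}(g)\coloneqq F(g)|_{\sk{F}(n)}\colon\sk{F}(n)\to\sk{F}(m)$ for each such $g$, viewed as a morphism of $\A_*^\op$. Since identities are isomorphisms and hence embeddings, embeddings are closed under composition, and the restricted maps compose exactly as the unrestricted ones do, functoriality of $\sk{F}$ on $\A_*^\op$ is then inherited verbatim from functoriality of $F$ on $\A^\op$. So the whole content of the lemma is the displayed claim.

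To prove it, fix an embedding $g\colon m\emb n$ and a generic element $x\in\sk{F}(n)$, and suppose for contradiction that $F(g)(x)\in F(m)$ is degenerate. Then there are a proper quotient $f\colon m\epi k$ and $y\in F(k)$ with $F(f)(y)=F(g)(x)$. Applying the amalgamation property of $F$ to the span $k\xleftarrow{\,f\,}m\xrightarrow{\,g\,}n$ with the compatible elements $y\in F(k)$ and $x\in F(n)$ produces a commutative square
\[\begin{tikzcd}
m \arrow{r}{g} \arrow{d}[swap]{f} & n \arrow{d}{g'} \\
k \arrow{r}{f'} & c
\end{tikzcd}\]
together with $z\in F(c)$ satisfying $F(g')(z)=x$ and $F(f')(z)=y$. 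Factor $g'$ through the proper factorisation system as $g'=e\circ q$ with $q\colon n\epi d$ a quotient and $e\colon d\emb c$ an embedding. Then $x=F(g')(z)=F(q)\bigl(F(e)(z)\bigr)$, so if $q$ were a proper quotient $x$ would be degenerate, contrary to hypothesis; hence $q$ is an isomorphism.

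It remains to derive a contradiction from the properness of $f$. Rewriting $g'\circ g=f'\circ f$ as $e\circ(q\circ g)=f'\circ f$ and using the left lifting property of $\Q$-morphisms against $\M$-morphisms, we obtain a diagonal $h\colon k\to d$ with $h\circ f=q\circ g$ and $e\circ h=f'$; combining with the invertibility of $q$ gives $g=(q^{-1}\circ h)\circ f$. Feeding the commutative square with top edge $\id_m$, left edge $f$, right edge $g$ and bottom edge $q^{-1}\circ h$ into the lifting property once more yields a morphism $d'\colon k\to m$ with $d'\circ f=\id_m$. Thus $f$ is a split monomorphism; being simultaneously an epimorphism, it is an isomorphism, contradicting the assumption that $f$ is a proper quotient. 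I expect this last stretch to be the main obstacle: the delicate point is to split $g'$ so as to isolate the quotient part that genericity of $x$ annihilates from the embedding part, and then to exploit that $g$ is an embedding factoring through the epimorphism $f$ in order to collapse $f$ via the lifting property. The remaining steps — the reduction to the claim and the bookkeeping of which arrows lie in $\Q$ and which in $\M$ — are routine.
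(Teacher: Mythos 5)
Your proof is correct and follows essentially the same route as the paper's: complete the span formed by the witnessing proper quotient $f$ and the embedding $g$ via amalgamation, use genericity of $x$ to force the quotient part of the resulting arrow to be an isomorphism, and conclude that $f$ itself must be an isomorphism, contradicting properness. The only differences are cosmetic: the paper invokes Lemma~\ref{l:amalg-quotients} to arrange the completed square so that the arrow opposite $f$ is a quotient and then finishes with the cancellation properties of Lemma~\ref{l:factorisation-properties}, whereas you inline these steps by taking the $(\Q,\M)$-factorisation of $g'$ and applying the lifting property twice (a longer path to the same cancellation fact).
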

\begin{proof}
It suffices to show that, for all embeddings $g\colon n\emb n'$ in $\A$, the map $F(g)\colon F(n')\to F(n)$ preserves generic elements, for then it restricts to a map $\sk{F}(n')\to \sk{F}(n)$ (functoriality is clear). 

Let $x\in F(n')$ be an arbitrary generic element. Suppose that there exist a quotient $f\colon n\epi m$ and an element $y\in F(m)$ such that $F(f)(y)=F(g)(x)$. By Lemma~\ref{l:amalg-quotients} in the appendix, the span formed by $f$ and $g$ can be completed to a commutative square
\[\begin{tikzcd}
n \arrow[twoheadrightarrow]{r}{f} \arrow[rightarrowtail]{d}[swap]{g} & m \arrow[dashed]{d}{g'} \\
n' \arrow[dashed,twoheadrightarrow]{r}{f'} & p
\end{tikzcd}\]
and there exists $z\in F(p)$ satisfying $F(f')(z)=x$ (and $F(g')(z)=y$). As $f'$ is a quotient and $x$ is generic, $f'$ must be an isomorphism. It follows easily by Lemma~\ref{l:factorisation-properties}\ref{compositions},\ref{isos},\ref{cancellation-m} that $f$ is an isomorphism. Therefore, $F(g)(x)$ is a generic element of $F(n)$. 
\end{proof}

\begin{definition}
The \emph{Stirling kernel} of a polyadic set $F\colon \A^\op\to\Set$ is the functor $\sk{F}\colon \A_{*}^\op\to \Set$.
\end{definition}
In Corollary~\ref{cor:kernel-is-polyadic} below we shall see that, under mild assumptions on $\A$, the Stirling kernel of a polyadic set on $\A$ is a polyadic set on $\A_{*}$.

The next example will play a pivotal role in the study of homomorphism counting results.
\begin{example}\label{ex:kernel-repr}
Let $F\colon \A^\op\to\Set$ be any representable presheaf. Up to a natural isomorphism, we can assume that $F=\yo_a$ for some object $a\in \A$. By Lemma~\ref{l:repr-polyadic-set}, $F$ is a polyadic set. Its Stirling kernel $\sk{F}\colon \A_*^\op\to\Set$ sends an object $b\in \A$ to the set $\M(b,a)$ of all embeddings $b\emb a$. In other words, an element of $F(b)=\yo_a(b)$ is generic if, and only if, it is an embedding. This was proved in \cite[Lemma~11]{DJR2021}; for the sake of completeness, we provide a proof of this fact.

 Let $f$ be an arbitrary element of $\yo_a(b)$. Assume that $f$ is generic, and take its $(\Q,\M)$ factorisation:
    \[\begin{tikzcd}
        b \arrow[rr, relay arrow=2ex, "f"] \arrow[r, twoheadrightarrow, "g"] & c \arrow[r, rightarrowtail, "h"] & a
    \end{tikzcd}\]
Since $\yo_a(g)(h)=f$ and the latter is generic, $g$ must be an embedding. By Lemma~\ref{l:factorisation-properties}\ref{compositions}, $f=h\circ g$ is also an embedding. 
    
 Conversely, suppose that $f$ is an embedding and pick a quotient $g\colon b\epi c$ and an element $h\in \yo_a(c)$ such that $\yo_a(g)(h)=f$, i.e.\ $h\circ g=f$. By Lemma~\ref{l:factorisation-properties}\ref{cancellation-m}, $g$ is an embedding, so it is not a proper quotient. It follows that $f$ is generic.
\end{example}
The next proposition shows that, under mild assumptions, a polyadic set can be recovered from its Stirling kernel (as an appropriate left Kan extension).\footnote{For applications to homomorphism counting in the next section, we will only need the first part of Proposition~\ref{p:reconstruction}. The reader unfamiliar with the concept of Kan extension can safely ignore the second part of the lemma.} 
This observation is due to Joyal (for polyadic spaces), see also~\cite{Marques2021}; the proof offered below is a simple adaptation to the discrete case.

For all objects $a\in \A$, we denote by $\Q(a)$ and $\M(a)$, respectively, the poset of quotients of $a$ and the poset of embeddings of $a$; for a definition, see Appendix~\ref{s:fact-systems}. Loosely speaking, the elements of $\Q(a)$ are equivalence classes of quotients from $a$, and the elements of $\M(a)$ are equivalence classes of embeddings into $a$. The category $\A$ is said to be \emph{$\Q$-well-founded} if, for all objects $a\in \A$, the poset $\Q(a)$ is well-founded. That is, any non-empty subset of $\Q(a)$ has a minimal element. Similarly, $\A$ is \emph{$\M$-well-founded} if, for all objects $a\in \A$, the poset $\M(a)$ is well-founded.
\begin{proposition}\label{p:reconstruction}
Let $F\colon \A^\op\to \Set$ be a polyadic set and assume that $\A$ is $\Q$-well-founded. The following statements hold:
\begin{enumerate}[label=(\alph*)]
\item\label{coprod-formula} For all $n\in \A$, \[F(n)\cong \coprod_{n\epi m} \sk{F}(m)\] where the coproduct is indexed by the set $\Q(n)$.
\item\label{Lan-formula} $F\cong \Lan_J \sk{F}$, where $\Lan_J \sk{F}$ is the left Kan extension of $\sk{F}$ along the inclusion functor $J\colon \A_*^\op\into \A^\op$.
\end{enumerate}
\end{proposition}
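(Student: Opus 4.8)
The plan is to prove part~\ref{coprod-formula} by exhibiting an explicit bijection, and then to derive part~\ref{Lan-formula} from it using the pointwise formula for the left Kan extension together with part~\ref{coprod-formula} itself.

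For~\ref{coprod-formula}, I fix $n\in\A$ and consider the map
\[
\Phi_n\colon\ \coprod_{n\epi m}\sk{F}(m)\ \longrightarrow\ F(n),\qquad (f,y)\ \longmapsto\ F(f)(y),
\]
with the coproduct indexed by $\Q(n)$. First I would check that $\Phi_n$ is well defined: if $\theta\colon m\to m'$ is an isomorphism with $\theta\circ f=f'$, then $\theta$ is an embedding, so $\sk{F}(\theta)(y')$ is defined and $F(f')(y')=F(f)(\sk{F}(\theta)(y'))$, i.e.\ equivalent representatives of the coproduct have the same image. \emph{Surjectivity} is where $\Q$-well-foundedness enters: given $x\in F(n)$, the set of classes of quotients $f\colon n\epi m$ with $x\in\operatorname{im}F(f)$ is non-empty (it contains $\id_n$), hence has a minimal element $f\colon n\epi m$; picking $y\in F(m)$ with $F(f)(y)=x$, the element $y$ must be generic, since if $y=F(g)(y')$ with $g\colon m\epi m'$ a proper quotient then $g\circ f$ is again a quotient, strictly below $f$ in $\Q(n)$ (as $g$ is not invertible and $f$ is an epimorphism), with $x\in\operatorname{im}F(g\circ f)$, contradicting minimality. \emph{Injectivity} is where the amalgamation property does the real work: if $F(f)(y)=F(f')(y')$ with $f,f'$ quotients and $y,y'$ generic, applying the amalgamation property of $F$ to the span formed by $f$ and $f'$ produces a commutative square with new legs $g,g'$ and an element $z$ above such that $F(g)(z)=y$ and $F(g')(z)=y'$; factoring $g$ as a quotient followed by an embedding and using that $y$ is generic forces the quotient part to be an isomorphism, so $g$, and likewise $g'$, is an embedding; then $g\circ f$ and $g'\circ f'$ are two $(\Q,\M)$-factorisations of one and the same morphism, so essential uniqueness of factorisations gives an isomorphism $\theta\colon m\to m'$ with $\theta\circ f=f'$ and $g'\circ\theta=g$, and since $F(\theta)(y')=F(g'\circ\theta)(z)=F(g)(z)=y$ the pairs $(f,y)$ and $(f',y')$ represent the same element of the coproduct.

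For~\ref{Lan-formula}, I would use the pointwise formula $(\Lan_J\sk{F})(n)=\colim_{J\down n}\sk{F}\circ\pi$, where the comma category $J\down n$ has as objects the morphisms $\phi\colon n\to b$ of $\A$, a morphism $\phi\to\phi'$ being an embedding $e\colon\operatorname{cod}\phi'\emb\operatorname{cod}\phi$ with $e\circ\phi'=\phi$, and $\pi(\phi)=\operatorname{cod}\phi$. Using $(\Q,\M)$-factorisations one shows two things: (i) the full subcategory $\Q_n\subseteq J\down n$ spanned by the quotients $n\epi m$ is a groupoid in which every object has trivial automorphism group (an automorphism of $n\epi m$ in $\Q_n$ is an embedding $m\emb m$ that cancels against the epimorphism $n\epi m$, hence the identity) and whose set of components is $\Q(n)$; and (ii) the inclusion $\Q_n\into J\down n$ is a final functor, because for each object $\phi$ of $J\down n$ the corresponding comma category is the non-empty connected groupoid of $(\Q,\M)$-factorisations of $\phi$. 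Consequently
\[
(\Lan_J\sk{F})(n)\ \cong\ \colim_{\Q_n}\sk{F}\circ\pi\ \cong\ \coprod_{[q]\in\Q(n)}\sk{F}(\operatorname{cod}q),
\]
which is precisely the coproduct of~\ref{coprod-formula}. To upgrade this to a natural isomorphism $F\cong\Lan_J\sk{F}$, I would observe that the inclusion $\iota\colon\sk{F}\into F\circ J$ of generic elements is a natural transformation and let $\widehat{\iota}\colon\Lan_J\sk{F}\to F$ be the morphism it induces through the universal property of the left Kan extension (concretely, $\Lan_J\iota$ composed with the counit $\Lan_J(F\circ J)\to F$); a short diagram chase shows that $\widehat{\iota}_n$ sends the class of $(y,\phi)$ to $F(\phi)(y)$, so under the identification above $\widehat{\iota}_n$ is exactly the bijection $\Phi_n$ from~\ref{coprod-formula}. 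Since a pointwise-bijective natural transformation is an isomorphism, this completes the argument.

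The step I expect to be the main obstacle is the injectivity argument in~\ref{coprod-formula}: one has to convert the bare commutative square handed over by the amalgamation property into a genuine isomorphism of quotients, and this rests on a careful interplay between genericity and the cancellation and essential-uniqueness properties of the proper factorisation system---first to see that the new legs of the square are embeddings, and then to compare the two resulting factorisations. By contrast, the additional work needed for~\ref{Lan-formula}---finality of $\Q_n\into J\down n$, triviality of the automorphism groups, and the identification of $\widehat{\iota}_n$ with $\Phi_n$---is routine bookkeeping with the factorisation system and the Kan extension formula and carries no extra mathematical content.
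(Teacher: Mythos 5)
Your proof is correct and follows essentially the same route as the paper: an explicit bijection for part (a), with $\Q$-well-foundedness yielding surjectivity via a minimal quotient and the amalgamation property yielding injectivity, and finality of the subcategory of quotients in the comma category for part (b). The only local differences are that for injectivity you upgrade the amalgamating legs to embeddings and invoke essential uniqueness of $(\Q,\M)$-factorisations where the paper instead applies its Lemma on amalgamation along quotients (the two arguments are interchangeable), and in (b) you make the comparison map $\Lan_J\sk{F}\to F$ and its identification with $\Phi_n$ explicit, which the paper leaves implicit.
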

\begin{proof}
(a) Let $\A$ and $F\colon \A^\op\to\Set$ be as in the statement. For each quotient $f\colon n\epi m$ in $\Q(n)$, the function $F(f)\colon F(m)\to F(n)$ is an injection by Lemma~\ref{l:quot-inj-polyadic}. Let $\xi_f\colon \sk{F}(m)\emb F(n)$ be the obvious restriction of $F(f)$. By the universal property of the coproduct in $\Set$, the family of functions $\{\xi_f \mid f\in \Q(n)\}$ induces a unique map
\[
\xi\colon \coprod_{n\epi m} \sk{F}(m) \to F(n).
\]
We claim that $\xi$ is a bijection. For injectivity, suppose that there are quotients $f_1\colon n\epi m_1$, $f_2\colon n\epi m_2$ and elements $x_1\in \sk{F}(m_1)$ and $x_2\in \sk{F}(m_2)$ such that $\xi(x_1)=\xi(x_2)$. That is, $F(f_1)(x_1)=F(f_2)(x_2)$. By the amalgamation property for $F$ and Lemma~\ref{l:amalg-quotients}, there are quotients $g_1\colon m_1 \epi p$ and $g_2\colon m_2\epi p$ making the square below commute,
\[\begin{tikzcd}
n \arrow[twoheadrightarrow]{r}{f_1} \arrow[twoheadrightarrow]{d}[swap]{f_2} & m_1 \arrow[twoheadrightarrow, dashed]{d}{g_1} \\
m_2 \arrow[twoheadrightarrow, dashed]{r}{g_2} & p
\end{tikzcd}\]
and $y\in F(p)$ such that $F(g_1)(y)=x_1$ and $F(g_2)(y)=x_2$. As $x_1$ and $x_2$ are generic, $g_1$ and $g_2$ must be isomorphisms. In particular, the isomorphism $g_2^{-1}\circ g_1\colon m_1\to m_2$ witnesses the fact that $f_1=f_2$ as elements of $\Q(n)$. Since $\xi$ is injective on each summand, it follows that $x_1=x_2$. 

For surjectivity of $\xi$, suppose that $x\in F(n)$ and consider the set 
\[
S\coloneqq \{f\in \Q(n)\mid F(f)^{-1}(x)\neq \emptyset\}.
\] 
If $S=\emptyset$, then $x\in \sk{F}(n)$ and thus it belongs to the image of $\xi$. Therefore, assume that $S$ is non-empty. Because $\A$ is $\Q$-well-founded, $S$ has a minimal element $g\colon n\epi m$. Let $y\in F(m)$ be such that $F(g)(y)=x$. As $g$ is minimal, $y$ must belong to $\sk{F}(m)$, and so $x$ is in the image of $\xi$.

(b) Fix an arbitrary object $n\in \A$. Consider the composite functor
\[\begin{tikzcd}
G\colon J\down n \arrow{r}{\pi} & \A_*^\op \arrow{r}{\sk{F}} & \Set
\end{tikzcd}\] 
where $J\down n$ is the comma category and $\pi$ sends an object $(m, J(m)\to n)$, where $J(m)\to n$ is an arrow in $\A^{\op}$, to its first component $m$. In view of Lemma~\ref{l:amalg-quot}\ref{comma-cat-quot}, we can replace without loss of generality the category $J\down n$ with its full subcategory $\D$ consisting of the pairs $(m, J(m)\to n)$ whose second components correspond to quotients in $\A$. Note that $\D$ is a groupoid, i.e.\ every arrow in $\D$ is an isomorphism. Just observe that a morphism 
\[
(m',J(m')\to n)\to (m, J(m)\to n)
\]
in $\D$ corresponds to a commutative triangle
\[\begin{tikzcd}
{} & n \arrow[twoheadrightarrow]{dl} \arrow[twoheadrightarrow]{dr} & {} \\
m \arrow[rightarrowtail]{rr} & {} & m' 
\end{tikzcd}\]
in $\A$, and by Lemma~\ref{l:factorisation-properties}\ref{cancellation-e},\ref{isos} the horizontal arrow in necessarily an isomorphism.
Upon choosing representatives, this is equivalent to considering the small diagram
\[\begin{tikzcd}
G'\colon \Q(n)^\op \arrow{r}{\pi'} & \A_*^\op \arrow{r}{\sk{F}} & \Set
\end{tikzcd}\] 
where $\Q(n)$ is regarded as a set (i.e., a discrete category) and $\pi'$ sends (the equivalence class of) a quotient $n\epi m$ in $\A$ to its codomain.

The colimit of $G$ thus coincides with the coproduct $\coprod_{n\epi m} \sk{F}(m)$ indexed by the set $\Q(n)$. Moreover, any morphism $h\colon n\to n'$ in $\A$ induces a unique function
\[
\coprod_{n\epi m'} \sk{F}(m') \ \longleftarrow \ \coprod_{n'\epi m} \sk{F}(m)
\]
whose restriction to $\sk{F}(m)$, for all (representatives of equivalence classes of) quotients ${f\colon n'\epi m}$, is the function $\sk{F}(m)\to \sk{F}(m')$ obtained by applying $\sk{F}$ to the bottom horizontal arrow in the following commutative diagram.
\[\begin{tikzcd}
n \arrow{r}{h} \arrow[dashed,twoheadrightarrow]{d} & n' \arrow[twoheadrightarrow]{d}{f} \\
m' \arrow[dashed,rightarrowtail]{r} & m
\end{tikzcd}\]
By item (a), and the colimit formula for pointwise left Kan extensions (cf.\ \cite[Theorem~1 p.~237]{MacLane1998} for the dual statement), we see that $F\cong \Lan_J \sk{F}$.
\end{proof}

We illustrate the previous proposition by means of an example, which also justifies the terminology \emph{Stirling kernel}:
\begin{example}
Let $F\colon \FinSet^\op\to\Set$ be any polyadic set on the category $\FinSet$, and equip the latter with the usual (surjective, injective) factorisation system. We denote by $\uline{n}\in\FinSet$ an $n$-element set. For all non-negative integers $m\leq n$, the number of non-equivalent surjections $\uline{n}\epi \uline{m}$ in $\Q(\uline{n})$ coincides with the number of ways to partition an $n$-element set into $m$ non-empty subsets. This is commonly denoted by $S(n,m)$ and known as the \emph{Stirling number of the second kind} associated with the pair $(n,m)$, see e.g.\ \cite[\S 1.5]{Godsil1993}. Therefore, denoting by $k\, \uline{n}$ the disjoint sum of $k$ copies of $\uline{n}$, Proposition~\ref{p:reconstruction} yields the formula
\[
F(\uline{n})\cong \coprod_{m\leq n} S(n,m) \, \sk{F}(\uline{m})
\]
for all finite sets $\uline{n}$. In particular, if $F$ is a polyadic finite set, we get
\[
|F(\uline{n})|= \sum_{m\leq n} S(n,m) \cdot |\sk{F}(\uline{m})|.
\]
\end{example}

\begin{remark}
The assumption in Proposition~\ref{p:reconstruction} that $\A$ be $\Q$-well-founded is necessary, even when $F$ is a polyadic \emph{finite} set. For instance, let $\A$ be the set $\N$ of natural numbers with the usual total order, regarded as a category. If $\A$ is equipped with the factorisation system $(\Q,\M)$ where $\Q$ consists of all morphisms, and $\M$ consists of the identity morphisms, then $\A_*$ is the category with set of objects $\N$ and no arrows but identities. Note that $\A$ is not $\Q$-well-founded. Just observe that, for all $n\in \A$, the strictly increasing sequence of natural numbers $n < n+1 < n+2 < \cdots$ induces the strictly decreasing sequence
\[
n \epi n+1 \epi n+2 \epi \cdots
\]
in $\Q(n)$. Thus, the latter is not well-founded.
Now, let $a$ be any non-empty finite set, and let $F\colon \A^\op \to \FinSet$ be the constant functor of value $a$. It is not difficult to see that $F$ is a polyadic finite set, and ${\sk{F}\colon \A_*^\op\to\FinSet}$ is the constant functor of value $\emptyset$. Then ${\Lan_J \sk{F}\colon \A^\op\to \FinSet}$ is also the constant functor of value $\emptyset$ and so $F\not\cong \Lan_J \sk{F}$.
\end{remark}

\begin{corollary}\label{cor:kernel-is-polyadic}
Let $\A$ be $\Q$-well-founded. The Stirling kernel of a polyadic (finite) set on $\A$ is a polyadic (finite) set on $\A_*$.
\end{corollary}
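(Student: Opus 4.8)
The plan is to show that $\sk{F}\colon \A_*^\op\to\Set$ satisfies the amalgamation property, i.e.\ that its category of elements admits amalgamation of spans. So fix a span $m \xleftarrow{g} n \xrightarrow{f} m'$ of \emph{embeddings} in $\A$ (these are the morphisms of $\A_*$), together with generic elements $x\in \sk{F}(m)$ and $x'\in \sk{F}(m')$ satisfying $F(g)(x) = F(f)(x')$ in $F(n)$. Since $F$ itself is a polyadic set, the amalgamation property for $F$ — combined with Lemma~\ref{l:amalg-quotients} or Lemma~\ref{l:amalg-quot} in the appendix, which guarantees that a span of embeddings can be completed by embeddings — produces a commutative square in $\A$
\[\begin{tikzcd}
n \arrow[rightarrowtail]{r}{f} \arrow[rightarrowtail]{d}[swap]{g} & m' \arrow[rightarrowtail,dashed]{d}{g'} \\
m \arrow[rightarrowtail,dashed]{r}{f'} & p
\end{tikzcd}\]
with both dashed arrows embeddings, and an element $z\in F(p)$ with $F(f')(z) = x$ and $F(g')(z) = x'$.

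The remaining work — and this is the crux — is to promote $z\in F(p)$ to a \emph{generic} element of some suitable amalgam, since $z$ need not lie in $\sk{F}(p)$. Here I would invoke the $\Q$-well-foundedness of $\A$ together with the coproduct decomposition of Proposition~\ref{p:reconstruction}\ref{coprod-formula}: write $z$ as $F(q)(w)$ for some quotient $q\colon p\epi q_0$ and generic $w\in\sk{F}(q_0)$, choosing $q$ minimal in $\Q(p)$ among quotients through which $z$ factors (well-foundedness makes this possible, exactly as in the surjectivity argument in the proof of Proposition~\ref{p:reconstruction}\ref{coprod-formula}). Then I would form the composite embeddings-followed-by-quotient $q\circ f'\colon m\to q_0$ and $q\circ g'\colon m'\to q_0$ and take their $(\Q,\M)$ factorisations, $q\circ f' = \bar f' \circ e$ and $q\circ g' = \bar g'\circ e'$ with $e,e'$ quotients and $\bar f',\bar g'$ embeddings. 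The key point is that $e$ and $e'$ are forced to be \emph{isomorphisms}: indeed $F(\bar f')$ maps $w$ to $x$ up to the quotient $e$, and since $x$ is generic, Lemma~\ref{l:factorisation-properties} (cancellation and the isos clause) forces $e$ to be an iso; symmetrically for $e'$. Composing with these isomorphisms, $w$ itself becomes a generic element of $\sk{F}(q_0)$ whose images under the embeddings $\bar f'\colon m\emb q_0$ and $\bar g'\colon m'\emb q_0$ are $x$ and $x'$ respectively. Thus the square formed by $f,g,\bar g',\bar f'$ (which commutes because both composites equal $q\circ f = q\circ g$ after the factorisations, using that embeddings and quotients are each closed under composition) together with $w\in\sk{F}(q_0)$ witnesses amalgamation in $\int\sk{F}$.

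Finally, the finiteness clause is immediate: if $F(a)$ is finite for every $a$, then $\sk{F}(a)\subseteq F(a)$ is a subset and hence finite, so the Stirling kernel of a polyadic finite set is again pointwise finite. This gives the parenthetical strengthening in the statement.

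I expect the main obstacle to be the bookkeeping in the middle step — correctly extracting the minimal quotient $q$ through which $z$ factors and then verifying that the induced quotients $e,e'$ out of $m$ and $m'$ are isomorphisms. This is where $\Q$-well-foundedness is genuinely used and where one must be careful to apply the factorisation-system lemmas (composition stability, cancellation of embeddings, and the fact that a morphism that is both a quotient and an embedding is an isomorphism) in the right order. Once that step is in place, everything else is a direct unwinding of the amalgamation property for $F$ and the definition of $\sk{F}$.
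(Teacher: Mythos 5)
Your proof follows essentially the same route as the paper's: amalgamate in $F$, invoke Proposition~\ref{p:reconstruction}\ref{coprod-formula} (this is exactly where $\Q$-well-foundedness enters) to replace the amalgamating element by a generic $w$ over a quotient $q\colon p\epi q_0$, and then use genericity of $x$ and $x'$ together with the $(\Q,\M)$ factorisation to force the quotient parts of $q\circ f'$ and $q\circ g'$ to be isomorphisms. One claim in your first step is wrong, however: neither Lemma~\ref{l:amalg-quot} nor Lemma~\ref{l:amalg-quotients} guarantees that a span of embeddings can be amalgamated by embeddings --- those lemmas concern spans one of whose legs is a \emph{quotient}, and the analogous statement for embeddings is essentially the content of the corollary you are proving, so it cannot be cited at the outset. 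Fortunately the claim is never used: your argument that $e$ and $e'$ are isomorphisms relies only on the identities $F(\bar f'\circ e)(w)=x$ and $F(\bar g'\circ e')(w)=x'$ together with the genericity of $x$ and $x'$, not on $f'$ or $g'$ being embeddings. Deleting the appeal to the appendix lemmas and taking $f',g'$ to be the arbitrary morphisms supplied by the amalgamation property of $F$ leaves a correct proof that coincides with the one in the paper.
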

\begin{proof}
Let $\A$ be as in the statement and let $F\colon \A^\op\to\Set$ be an arbitrary polyadic set. We claim that $\sk{F}\colon \A_*^\op\to\Set$ has the amalgamation property. Consider embeddings $f_1\colon n\emb m_1$, $f_2\colon n\emb m_2$ and generic elements $s\in \sk{F}(m_1)$ and $t\in\sk{F}(m_2)$ such that $F(f_1)(s)=F(f_2)(t)$. By the amalgamation property for $F$, there exist a commutative square
\[\begin{tikzcd}
n \arrow[rightarrowtail]{r}{f_1} \arrow[rightarrowtail]{d}[swap]{f_2} & m_1 \arrow[dashed]{d}{g_2} \\
m_2 \arrow[dashed]{r}{g_1} & p
\end{tikzcd}\]
in $\A$ and an element $u\in F(p)$ such that $F(g_2)(u)=s$ and $F(g_1)(u)=t$. By Proposition~\ref{p:reconstruction}\ref{coprod-formula}, there exist a quotient $h\colon p\epi q$ and a generic element $v\in \sk{F}(q)$ such that $F(h)(v)=u$. Clearly, we have 
\[
F(h\circ g_2)(v)=s \ \text{ and } \ F(h\circ g_1)(v)=t.
\] 
It suffices to show that $h\circ g_1$ and $h\circ g_2$ are embeddings, for then the following commutative square in $\A_*$ shows that $\sk{F}$ has the amalgamation property.
\[\begin{tikzcd}
n \arrow[rightarrowtail]{r}{f_1} \arrow[rightarrowtail]{d}[swap]{f_2} & m_1 \arrow[rightarrowtail, dashed]{d}{h\circ g_2} \\
m_2 \arrow[rightarrowtail, dashed]{r}{h\circ g_1} & q
\end{tikzcd}\]
To see that $h\circ g_1$ is an embedding, take its $(\Q,\M)$ factorisation:
\[\begin{tikzcd}[row sep=0.5em]
m_2 \arrow[twoheadrightarrow]{dr}[swap]{e} \arrow{rr}{h\circ g_1} & & q \\
{} & {\cdot} \arrow[rightarrowtail,shorten >=0.5ex]{ur}[swap]{l} & 
\end{tikzcd}\]
Let $x\coloneqq F(l)(v)$. As $F(e)(x)=t$ and $t$ is generic, it follows that $e$ is an isomorphism. Therefore, $h\circ g_1$ is an embedding by Lemma~\ref{l:factorisation-properties}\ref{compositions},\ref{isos}. The same proof, mutatis mutandis, shows that $h\circ g_2$ is an embedding.

Finally, note that if $F$ is a polyadic \emph{finite} set, then so is $\sk{F}$.
\end{proof}

\subsection{Pointwise isomorphisms}\label{s:pointwise-isomorphisms}
The main result of this section (Proposition~\ref{pr:pointwise-iso-kernel} below) states that, under appropriate assumptions, whenever two polyadic finite sets are pointwise isomorphic, their Stirling kernels are also pointwise isomorphic. This observation is at the core of the homomorphism counting results presented in Sections~\ref{s:hom-count-finite} and~\ref{s:beyond-loc-finite}. 

Recall that two parallel functors $F, G\colon \C\to \D$ are \emph{pointwise isomorphic} if, for all $c\in \C$, there is an isomorphism $\eta_c\colon F(c)\to G(c)$ in $\D$. This contrasts with the concept of \emph{natural isomorphism} between $F$ and $G$, whereby the isomorphisms $\eta_c$ are required to be natural in $c$.

In view of Proposition~\ref{p:reconstruction}, if $F$ and $G$ are polyadic sets whose Stirling kernels $\sk{F}$ and $\sk{G}$ are pointwise isomorphic, then $F$ and $G$ are also pointwise isomorphic. The converse holds for polyadic \emph{finite} sets and follows from an application of the M\"obius inversion formula, due to Gian-Carlo Rota. 

Recall that, given a finite\footnote{More generally, throughout this paragraph we could consider a \emph{locally finite} poset, i.e.\ a poset $P$ such that, for all $x,y\in P$, the interval $[x,y]\coloneqq\{z\in P\mid x\leq z\leq y\}$ is finite.} poset $P$, the \emph{incidence algebra} of $P$ has as elements the functions $f\colon P\times P\to \mathbb{R}$ satisfying $f(x,y)=0$ whenever ${x\not\leq y}$. These functions form an associative algebra over the reals with respect to pointwise sum, pointwise multiplication by scalars, and the convolution product $h=fg$ defined as
\[
h(x,y)\coloneqq\sum_{x\leq z\leq y} f(x,z)g(z,y).
\]
The identity element of the incidence algebra is the Kronecker function $\delta(x,y)$ satisfying $\delta(x,y)=1$ if $x=y$ and $\delta(x,y)=0$ otherwise.
The \emph{zeta function} of $P$ is the element $\zeta(x,y)$ of the incidence algebra such that $\zeta(x,y)=1$ if $x\leq y$ and $\zeta(x,y)=0$ otherwise. The function $\zeta$ is invertible in the incidence algebra, see \cite[Proposition~1]{Rota1964}, and its (two-sided) inverse $\mu$ is called the \emph{M\"obius function of $P$}.

\begin{lemma}[M\"obius inversion formula~\cite{Rota1964}]
Let $P$ be a finite poset and let $f_1,f_2\colon P\to \mathbb{R}$ be any two functions. For all $y\in P$,
\[
f_1(y)=\sum_{x\leq y}{f_2(x)} \enspace \Longleftrightarrow \enspace f_2(y)=\sum_{x\leq y}{f_1(x) \mu(x,y)},
\]
where $\mu$ is the M\"obius function of $P$.
\end{lemma}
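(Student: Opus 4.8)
The plan is to realise both displayed identities as instances of a single equation in a module over the incidence algebra of $P$, and then to invoke the invertibility of $\zeta$ recalled just above.

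First I would turn $\mathbb{R}^P$, the set of all functions $P\to\mathbb{R}$, into a right module over the incidence algebra: for $f\colon P\to\mathbb{R}$ and an element $g$ of the incidence algebra, set
\[
(f\cdot g)(y)\coloneqq \sum_{x\le y} f(x)\,g(x,y).
\]
Since $P$ is finite (or merely locally finite, as in the footnote), every such sum is finite, so this is well defined. A direct computation, reordering a double sum exactly as in the proof that the convolution product is associative, gives $(f\cdot g)\cdot h = f\cdot(gh)$ for all $g,h$ in the incidence algebra, and $f\cdot\delta = f$ is immediate; hence $\mathbb{R}^P$ is a right module over the incidence algebra.

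With this notation, the left-hand identity in the statement reads exactly $f_1 = f_2\cdot\zeta$, and the right-hand identity reads $f_2 = f_1\cdot\mu$; so the lemma amounts to the assertion that $f_1 = f_2\cdot\zeta$ is equivalent to $f_2 = f_1\cdot\mu$. This now follows formally from $\zeta\mu = \mu\zeta = \delta$ (the invertibility of $\zeta$ recorded above, \cite[Proposition~1]{Rota1964}): if $f_1 = f_2\cdot\zeta$, then $f_1\cdot\mu = f_2\cdot(\zeta\mu) = f_2\cdot\delta = f_2$; conversely, if $f_2 = f_1\cdot\mu$, then $f_2\cdot\zeta = f_1\cdot(\mu\zeta) = f_1\cdot\delta = f_1$.

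I expect no real obstacle here: the only step needing care is the associativity identity $(f\cdot g)\cdot h = f\cdot(gh)$, i.e.\ the interchange of summation $\sum_{x\le z\le y} f(x)g(x,z)h(z,y) = \sum_{x\le y} f(x)\big(\sum_{x\le z\le y} g(x,z)h(z,y)\big)$, which is routine because all index sets involved are finite. Alternatively, one can avoid the module language altogether and prove each implication directly by substituting one identity into the other and swapping the order of summation, using $\sum_{w\le x\le y}\mu(x,y) = (\zeta\mu)(w,y) = \delta(w,y)$ (and its mirror image involving $\mu\zeta$) to collapse the inner sum; this is the same argument in more elementary dress.
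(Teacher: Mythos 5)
Your proof is correct. The paper itself gives no proof of this lemma --- it is quoted directly from Rota \cite{Rota1964} --- but your argument is the standard derivation and fits precisely the machinery the paper recalls just before the statement: the incidence algebra, the convolution product, and the fact that $\mu$ is the two-sided inverse of $\zeta$. Packaging $\mathbb{R}^P$ as a right module so that the two displayed identities become $f_1=f_2\cdot\zeta$ and $f_2=f_1\cdot\mu$ is exactly the right way to reduce the equivalence to $\zeta\mu=\mu\zeta=\delta$, and the only computation requiring any care (associativity of the module action, i.e.\ the interchange of the finite double sum) is handled correctly.
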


\begin{proposition}\label{pr:pointwise-iso-kernel}
Suppose that $\A$ is $\Q$-well-founded, and let $F, G$ be polyadic finite sets on $\A$. If $F$ and $G$ are pointwise isomorphic, then so are their Stirling kernels $\sk{F}$ and $\sk{G}$.
\end{proposition}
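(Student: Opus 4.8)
The plan is to reduce the statement to the numerical identities $|\sk{F}(n)| = |\sk{G}(n)|$, one for each object $n\in\A$, and to establish these by a well-founded induction on $n$. Since $F$ and $G$ are polyadic \emph{finite} sets, the sets $\sk{F}(n)\subseteq F(n)$ and $\sk{G}(n)\subseteq G(n)$ are finite, so once their cardinalities agree a choice of bijections delivers the required (not necessarily natural) pointwise isomorphism $\sk{F}\cong\sk{G}$.

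The key ingredient is the identity
\[
|F(n)| = \sum_{[f]\in\Q(n)} |\sk{F}(\operatorname{cod} f)|
\]
supplied by Proposition~\ref{p:reconstruction}\ref{coprod-formula}, together with its analogue for $G$. Splitting off the contribution $|\sk{F}(n)|$ of the top element $[\id_n]\in\Q(n)$, the remaining sum ranges over the equivalence classes of proper quotients of $n$; it is finite, being bounded above by $|F(n)|<\infty$. Moreover $|F(n)| = |G(n)|$, since $F$ and $G$ are pointwise isomorphic.

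The induction is carried out along the relation that holds between $m$ and $n$ whenever $m$ is isomorphic to a proper quotient of $n$. This relation is set-like on isomorphism classes of objects --- the objects so related to $n$ lie, up to isomorphism, among the codomains of the \emph{set} $\Q(n)$, using well-copoweredness of $\A$ --- and it is well-founded. Indeed, an infinite descending sequence would be witnessed by proper quotients $q_i\colon n_i\epi n_{i+1}$, and then the quotients $q_0,\ q_1\de q_0,\ q_2\de q_1\de q_0,\dots$ of $n_0$ would form a strictly decreasing sequence in $\Q(n_0)$: each term factors through the previous one, and if two consecutive terms were equivalent then, right-cancelling the epimorphism $q_i\de\cdots\de q_0$, the map $q_{i+1}$ would be a split monomorphism, hence --- being a quotient, and so an epimorphism --- an isomorphism, contradicting its properness. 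This contradicts the $\Q$-well-foundedness of $\A$. For the cancellation and closure properties of the proper factorisation system $(\Q,\M)$ used here, I would appeal to Appendix~\ref{s:fact-systems}.

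The inductive step is then a one-line computation: granting that $|\sk{F}(m)| = |\sk{G}(m)|$ for every proper quotient $m$ of $n$, in the two identities above the left-hand sides coincide and the two sums over proper quotients coincide term by term (every codomain occurring there being a proper quotient of $n$), so subtracting the two finite sums yields $|\sk{F}(n)| = |\sk{G}(n)|$. I expect the only genuine work to lie in the bookkeeping indicated above --- the well-foundedness of the comparison relation, equivalently the absence of a proper quotient endomorphism in $\A$, and the finiteness of the sums, which legitimises the cancellation --- the combinatorial substance being already encapsulated in Proposition~\ref{p:reconstruction}. Finally, I would observe that when $\Q(n)$ is finite the argument is literally M\"obius inversion over the poset $\Q(n)$, specialising to classical Stirling inversion in the case $\A=\FinSet$; the induction above is the form of this inversion that requires no finiteness assumption on $\Q(n)$.
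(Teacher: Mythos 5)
Your proof is correct, and it reaches the conclusion by a genuinely different route from the paper's. Both arguments rest on the decomposition $|F(n)|=\sum_{[f]\in\Q(n)}|\sk{F}(m_f)|$ supplied by Proposition~\ref{p:reconstruction}\ref{coprod-formula}, but the paper works at each object $n$ in isolation: it gathers the finitely many classes of $\Q(n)$ whose summand is non-empty for $F$ or for $G$ into a finite sub-poset $P$ with top element $[\id_n]$ and applies Rota's M\"obius inversion over $P$, so that $|\sk{F}(n)|$ and $|\sk{G}(n)|$ are exhibited as one and the same linear combination $\sum_{x\in P}f_1(x)\mu(x,[\id_n])$ of the cardinalities $|F(m_i)|=|G(m_i)|$. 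You instead run a well-founded induction over (isomorphism classes of) objects along the proper-quotient relation and simply subtract the finite, termwise equal sums over proper quotients. Your closing remark is accurate: the induction is the unrolled form of that inversion. It is the more elementary of the two, dispensing with the incidence algebra and with the verification (left implicit in the paper) that the inversion hypothesis $f_1(y)=\sum_{x\leq y}f_2(x)$ holds at \emph{every} $y\in P$ rather than only at the top; what the M\"obius route buys in exchange is an explicit closed formula for $|\sk{F}(n)|$ at a single object, with no recursion over $\A$. Your justification that the comparison relation is set-like (well-copoweredness) and admits no infinite descending chain --- composites of the witnessing proper quotients form a strictly descending sequence in $\Q(n_0)$, strictness following by cancelling an epimorphism --- is sound and uses exactly the hypotheses available. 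The only blemish is the aside that this well-foundedness is ``equivalent'' to the absence of proper quotient endomorphisms: that absence is necessary but not sufficient, and nothing in your proof actually relies on the claim.
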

\begin{proof}
Let $F,G\colon \A^\op\to\FinSet$ be any two polyadic finite sets, and suppose that they are pointwise isomorphic. We must prove that $\sk{F}(n)\cong \sk{G}(n)$ for all $n\in \A$. Let us fix an arbitrary object $n\in \A$. By Proposition~\ref{p:reconstruction}\ref{coprod-formula}, we know that 
\[
F(n)\cong \coprod_{n\epi m} \sk{F}(m).
\] 
As the left-hand side is a finite set, there exist finitely many quotients 
\[
q_1\colon n\epi m_1, \ \ldots, \ q_l\colon n\epi m_l
\] 
such that $F(n)\cong \coprod_{i=1}^l\sk{F}(m_i)$. Similarly, there are finitely many quotients 
\[
q_{l+1}\colon n\epi m_{l+1}, \ \ldots, \ q_u\colon n\epi m_{u}
\] 
such that $G(n)\cong \coprod_{i=l+1}^u\sk{G}(m_i)$. Let $P$ be the sub-poset of $\Q(n)$ defined by the elements $q_1,\ldots, q_u$ and the identity of $n$. For convenience of notation, we let $m_0\coloneqq n$ and let $q_0\colon n \epi m_0$ be the identity. Note that $q_0$ is the top element of $P$. Clearly, $F(n)\cong \coprod_{i=0}^u\sk{F}(m_i)$ and $G(n)\cong \coprod_{i=0}^u\sk{G}(m_i)$. 

Consider the functions on $P$ determined, for all $i\in \{0,\ldots, u\}$, by the cardinalities of $F(m_i)$ and $\sk{F}(m_i)$, respectively:
\[
f_1\colon P\to \N, \ \ f_1(q_i)\coloneqq |F(m_i)|
\]
and
\[
f_2\colon P\to \N, \ \ f_2(q_i)\coloneqq |\sk{F}(m_i)|.
\]
Then $f_1(q_0)=\sum_{x\in P} f_2(x)$ and so, by the M\"{o}bius inversion formula, 
\[
f_2(q_0)=\sum_{x\in P} f_1(x)\mu(x,q_0)
\] 
where $\mu$ is the M\"{o}bius function of the poset $P$. Reasoning in a similar manner for the functions $g_1, g_2\colon P\to \N$ defined by $g_1(q_i)\coloneqq |G(m_i)|$ and $g_2(q_i)\coloneqq |\sk{G}(m_i)|$, we get  
\[
g_2(q_0)=\sum_{x\in P} g_1(x)\mu(x,q_0).
\]
By assumption, for all $i\in\{0,\ldots, u\}$, we have $|F(m_i)|=|G(m_i)|$ and so $f_1(q_i)=g_1(q_i)$. We conclude that $f_2(q_0)=g_2(q_0)$, i.e.\ $\sk{F}(n)\cong \sk{G}(n)$.
\end{proof}

If the category $\A$ has pushouts, then Proposition~\ref{pr:pointwise-iso-kernel} can be proved by exploiting the inclusion-exclusion principle (see e.g.\ \cite[\S 2.1]{Stanley1}), rather than the M\"{o}bius inversion formula, in which case we can dispense with the assumption that $\A$ be $\Q$-well-founded. This is an immediate consequence of a result in~\cite{DJR2021}, and will come in handy in Section~\ref{s:beyond-loc-finite}.
\begin{proposition}\label{pr:pointwise-iso-kernel-pushouts}
Suppose that $\A$ has pushouts, and let $F, G$ be polyadic finite sets on $\A$. If $F$ and $G$ are pointwise isomorphic, then so are their Stirling kernels $\sk{F}$ and $\sk{G}$.
\end{proposition}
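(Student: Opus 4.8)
The plan is to show that, for each fixed object $n\in\A$, the cardinality $|\sk{F}(n)|$ is computed from the numbers $|F(m)|$ (as $m$ ranges over the codomains of quotients of $n$) by an inclusion--exclusion formula whose shape does not depend on the polyadic finite set $F$. Since $F$ and $G$ are pointwise isomorphic, $|F(m)|=|G(m)|$ for every object $m$, so such a formula immediately yields $|\sk{F}(n)|=|\sk{G}(n)|$, i.e.\ $\sk{F}(n)\cong\sk{G}(n)$; as $\sk{F}$ and $\sk{G}$ take finite values and $n$ is arbitrary, this is exactly the assertion that they are pointwise isomorphic. This is the inclusion--exclusion computation carried out in~\cite{DJR2021}, the observation being that it applies to arbitrary polyadic finite sets and not just to hom-functors.

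To set this up I would fix $n\in\A$ and, for each quotient $q\colon n\epi m$, put $A^F_q\coloneqq\mathrm{im}(F(q))\subseteq F(n)$; by Lemma~\ref{l:quot-inj-polyadic} the map $F(q)$ is injective, so $|A^F_q|=|F(m)|$. By the definition of generic element, $\sk{F}(n)=F(n)\setminus\bigcup_q A^F_q$, the union ranging over all proper quotients $q$ of $n$. Since $F(n)$ is finite there are only finitely many distinct subsets among the $A^F_q$, so this union equals the union of finitely many of them, and the same holds for $G$. I would therefore fix a single finite list of proper quotients $\beta_1,\dots,\beta_N$ of $n$ with both $\bigcup_{j}A^F_{\beta_j}=\bigcup_q A^F_q$ and $\bigcup_j A^G_{\beta_j}=\bigcup_q A^G_q$.

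The hypothesis that $\A$ has pushouts enters through the fact that $\Q(n)$ has binary meets and that $\mathrm{im}(F(-))$ converts them into intersections. Given quotients $q_1,q_2$ of $n$, one forms the pushout of $q_1$ along $q_2$; the induced map $n\to p$ factors as a quotient $e$ followed by an embedding, and a diagonal fill-in using the lifting property of $(\Q,\M)$ shows that $e$ represents the meet $q_1\wedge q_2$ in $\Q(n)$ --- a proper quotient when $q_1,q_2$ are, since $\id_n$ is the top of $\Q(n)$. Then $A^F_{q_1}\cap A^F_{q_2}=A^F_{q_1\wedge q_2}$: the inclusion $\supseteq$ holds because $q_1\wedge q_2\le q_1,q_2$, and for $\subseteq$ one uses that $F$, being polyadic, sends the pushout square to a quasi-pullback (Lemma~\ref{l:amalg-quasi-pull}) to produce a common preimage in $F(p)$, hence in $\mathrm{im}(F(e))$. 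Iterating this, $\bigcap_{j\in I}A^F_{\beta_j}=A^F_{q_I}$ for every $I\subseteq\{1,\dots,N\}$, where $q_I\coloneqq\bigwedge_{j\in I}\beta_j$ and $q_\emptyset\coloneqq\id_n$. Setting $m_I\coloneqq\mathrm{cod}(q_I)$, the inclusion--exclusion principle then gives
\[
|\sk{F}(n)|=\sum_{I\subseteq\{1,\dots,N\}}(-1)^{|I|}\,|A^F_{q_I}|=\sum_{I\subseteq\{1,\dots,N\}}(-1)^{|I|}\,|F(m_I)|,
\]
and the identical formula with $G$ in place of $F$ computes $|\sk{G}(n)|$; since $|F(m_I)|=|G(m_I)|$ the two sums agree.

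I expect the main obstacle to be keeping the inclusion--exclusion \emph{uniform in the polyadic set}: a priori the proper quotients that witness the union $\bigcup_q A^F_q$ depend on $F$, so the computation must be run over a single family $\beta_1,\dots,\beta_N$ adequate for both $F$ and $G$, and every term in the expansion must be expressed as $|F(m)|$ for an object $m=m_I$ determined by the $\beta_j$ alone. Once the identity $A^F_{q_1}\cap A^F_{q_2}=A^F_{q_1\wedge q_2}$ is established the remainder is routine bookkeeping; this is precisely the argument of~\cite{DJR2021}, whose proof invokes only the amalgamation property and Lemma~\ref{l:quot-inj-polyadic}, both of which hold for arbitrary polyadic finite sets, so it in fact suffices to cite it.
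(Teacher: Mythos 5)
Your proposal is correct and follows essentially the same route as the paper: the paper simply cites \cite[Lemma~12]{DJR2021} for the inclusion--exclusion computation after checking (via Lemmas~\ref{l:amalg-quasi-pull} and~\ref{l:quot-inj-polyadic}) that polyadic finite sets send pushout squares of quotients to pullbacks of injections, and you have written out that same computation in full with the same two ingredients. One cosmetic remark: the induced map $n\to p$ out of the pushout of two quotients is already a quotient (by Lemma~\ref{l:factorisation-properties}\ref{compositions},\ref{pushouts}), so the $(\Q,\M)$-factorisation step in your construction of the meet is superfluous.
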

\begin{proof}
This was proved in \cite[Lemma~12]{DJR2021} whenever $F,G\colon \A^{\op}\to \FinSet$ are functors sending pushout squares in $\A$ consisting of quotients to pullback squares in $\FinSet$. (The proof of the aforementioned result relies on the inclusion-exclusion principle.) 

In turn, every polyadic (finite) set has this property. Just observe that, by Lemmas~\ref{l:amalg-quasi-pull} and~\ref{l:quot-inj-polyadic}, a polyadic set $\A^{\op}\to\Set$ sends pushout squares of quotients in $\A$ to quasi-pullbacks of injections. But a quasi-pullback in $\Set$ consisting of injections is a pullback, because the unique mediating map into the pullback is both injective and surjective.
\end{proof}

\section{Homomorphism Counting in Locally Finite Categories}\label{s:hom-count-finite}
We shall now exploit the framework of Section~\ref{s:polyadic-sets}, and in particular Proposition~\ref{pr:pointwise-iso-kernel}, to establish a homomorphism counting result for so-called locally finite categories (Theorem~\ref{thm:right-combinatorial-cats} below).

\begin{definition}
A category $\A$ is \emph{locally finite} if, for all objects $a,b\in\A$, the set $\hom_{\A}(a,b)$ is finite.
\end{definition}

To start with, we record for future reference a well known and easily proved fact about finite monoids. Recall that a monoid $(M,{\cdot},1)$ satisfies the \emph{left-cancellation law} provided that, for all $x,y,z\in M$,
\[
x\cdot y=x\cdot z \enspace \Longrightarrow \quad y=z.
\]

\begin{lemma}\label{l:left-can-group}
A finite monoid satisfying the left-cancellation law is a group.
\end{lemma}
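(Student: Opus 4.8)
The plan is to show that every element of $M$ has a two-sided inverse, using finiteness to promote injectivity of left-translations to surjectivity, and then invoking left-cancellation a second time to turn a right inverse into a genuine inverse.

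First I would fix an arbitrary element $x\in M$ and consider the left-translation map $L_x\colon M\to M$, $y\mapsto x\cdot y$. The left-cancellation law says precisely that $L_x$ is injective; since $M$ is finite, $L_x$ is therefore a bijection. In particular $1$ lies in the image of $L_x$, so there is some $y\in M$ with $x\cdot y=1$; that is, $x$ admits a right inverse $y$.

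Next I would verify that this $y$ is also a left inverse of $x$. Computing $x\cdot(y\cdot x)=(x\cdot y)\cdot x=1\cdot x=x=x\cdot 1$ and cancelling $x$ on the left yields $y\cdot x=1$. Hence $y$ is a two-sided inverse of $x$, and since $x$ was arbitrary, every element of $M$ is invertible, so $M$ is a group.

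I do not expect any real obstacle: the only points worth care are that finiteness is exactly what converts the injection $L_x$ into a surjection, and that a second appeal to left-cancellation is genuinely needed to see the right inverse is two-sided. Both uses of the hypotheses are essential, as the left-cancellative monoid $(\N,+,0)$ shows that neither finiteness nor cancellativity alone suffices.
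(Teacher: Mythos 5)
Your proof is correct; the paper records Lemma~\ref{l:left-can-group} as a well-known fact and gives no proof of its own, and your argument (injectivity of left-translation promoted to bijectivity by finiteness, then a second application of left-cancellation to show the right inverse is two-sided) is exactly the standard one intended.
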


\begin{theorem}\label{thm:right-combinatorial-cats}
Let $\A$ be a locally finite category admitting a proper factorisation system $(\Q,\M)$ such that $\A$ is $\Q$-well-founded. Then $\A$ is right-combinatorial.
\end{theorem}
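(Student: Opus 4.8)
The plan is to show that for objects $a,b \in \A$, if $\yo_a(c) \cong \yo_b(c)$ for all $c \in \A$, then $a \cong b$. The first step is to observe that $\yo_a$ and $\yo_b$ are polyadic finite sets: they are representable, hence polyadic by Lemma~\ref{l:repr-polyadic-set}, and each value $\hom_\A(c,a)$ is finite by local finiteness. The hypothesis says precisely that $\yo_a$ and $\yo_b$ are pointwise isomorphic, so by Proposition~\ref{pr:pointwise-iso-kernel} (applicable since $\A$ is $\Q$-well-founded) their Stirling kernels $\sk{(\yo_a)}$ and $\sk{(\yo_b)}$ are pointwise isomorphic as functors $\A_*^\op \to \FinSet$. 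By Example~\ref{ex:kernel-repr}, $\sk{(\yo_a)}(c) = \M(c,a)$ and $\sk{(\yo_b)}(c) = \M(c,b)$, the sets of embeddings. Hence $|\M(c,a)| = |\M(c,b)|$ for every $c \in \A$.

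The second step extracts an isomorphism from this equality of embedding-counts. Setting $c = a$ gives $|\M(a,a)| = |\M(a,b)|$, and setting $c = b$ gives $|\M(b,b)| = |\M(b,a)|$. The monoid $\M(a,a)$ of self-embeddings of $a$ (composition of embeddings is an embedding, and $\id_a$ is an embedding) is finite and satisfies the left-cancellation law, since embeddings are monomorphisms; by Lemma~\ref{l:left-can-group} it is a group. In particular every self-embedding of $a$ is an isomorphism, and likewise for $b$. It remains to produce mutually inverse embeddings between $a$ and $b$. Since $|\M(a,b)| = |\M(a,a)| \geq 1$ (it contains $\id_a$), there is at least one embedding $g\colon a \emb b$; symmetrically there is an embedding $f\colon b \emb a$. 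Then $f \circ g\colon a \emb a$ is a self-embedding, hence an isomorphism (as $\M(a,a)$ is a group), and similarly $g \circ f$ is an isomorphism; a standard argument then shows $g$ is an isomorphism, so $a \cong b$.

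The point requiring a little care is passing from ``there exists an embedding in each direction'' to ``$a \cong b$''. Having $f\circ g$ and $g\circ f$ both isomorphisms immediately gives that $g$ is split mono and split epi, hence an isomorphism: explicitly, $g$ has left inverse $(f\circ g)^{-1}\circ f$ and right inverse $f \circ (g\circ f)^{-1}$, and a morphism with a left and a right inverse is invertible. This is the main (and only mildly delicate) obstacle; the rest is assembling the pieces. One should also double-check the trivial base cases, e.g.\ that $\M(a,b)$ is nonempty — which follows since its cardinality equals $|\M(a,a)| \geq 1$ — so that an embedding $a \emb b$ genuinely exists before invoking the cancellation argument.

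\begin{proof}
Let $a, b \in \A$ and suppose $\yo_a(c) \cong \yo_b(c)$ for all $c \in \A$; we show $a \cong b$. By Lemma~\ref{l:repr-polyadic-set}, $\yo_a$ and $\yo_b$ are polyadic sets, and since $\A$ is locally finite they are polyadic finite sets. The hypothesis says that $\yo_a$ and $\yo_b$ are pointwise isomorphic, so by Proposition~\ref{pr:pointwise-iso-kernel} their Stirling kernels are pointwise isomorphic. By Example~\ref{ex:kernel-repr}, $\sk{(\yo_a)}(c) = \M(c,a)$ and $\sk{(\yo_b)}(c) = \M(c,b)$; hence $|\M(c,a)| = |\M(c,b)|$ for all $c \in \A$.

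In particular, $|\M(a,a)| = |\M(a,b)|$ and $|\M(b,b)| = |\M(b,a)|$. The set $\M(a,a)$ of self-embeddings of $a$, equipped with composition and unit $\id_a$, is a monoid; it is finite by local finiteness, and satisfies the left-cancellation law because embeddings are monomorphisms. By Lemma~\ref{l:left-can-group}, $\M(a,a)$ is a group, and similarly $\M(b,b)$ is a group.

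Since $\id_a \in \M(a,a)$, we have $|\M(a,b)| = |\M(a,a)| \geq 1$, so there exists an embedding $g\colon a \emb b$; symmetrically, $|\M(b,a)| = |\M(b,b)| \geq 1$ yields an embedding $f\colon b \emb a$. Now $f \circ g \in \M(a,a)$ and $g \circ f \in \M(b,b)$ are both isomorphisms, as these monoids are groups. Therefore $g$ has a left inverse $(f\circ g)^{-1}\circ f$ and a right inverse $f\circ (g\circ f)^{-1}$, so $g$ is an isomorphism. Hence $a \cong b$, and $\A$ is right-combinatorial.
\end{proof}
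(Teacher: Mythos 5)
Your proof is correct and follows essentially the same route as the paper: reduce to pointwise isomorphism of Stirling kernels via Proposition~\ref{pr:pointwise-iso-kernel}, identify the kernels with embedding-sets via Example~\ref{ex:kernel-repr}, and use Lemma~\ref{l:left-can-group} to turn the finite left-cancellative monoid of self-embeddings into a group. The only (immaterial) difference is the final step: the paper concludes that $j$ is an isomorphism from $j\circ i$ being invertible via the cancellation properties of the proper factorisation system (Lemma~\ref{l:factorisation-properties}), whereas you invoke both composites being invertible and the standard ``left inverse plus right inverse'' argument — both are fine.
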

\begin{proof}
Let $\A$ be as in the statement. For any object $a\in\A$, the representable functor $\yo_a\colon \A^\op\to\FinSet$ is a polyadic finite set by Lemma~\ref{l:repr-polyadic-set}, and its Stirling kernel sends an object $c\in \A$ to the finite set $\M(c,a)$ consisting of the embeddings $c\emb a$ (see Example~\ref{ex:kernel-repr}). 

Now, if $a,b\in \A$ are any two objects such that $\yo_a(c)\cong \yo_b(c)$ for all $c\in \A$ then, by Proposition~\ref{pr:pointwise-iso-kernel}, $\M(c,a)\cong \M(c,b)$ for all $c\in \A$. In particular, as $\M(a,a)$ is non-empty (it contains the identity arrow), there exists an embedding $i\in \M(a,b)$. Similarly, there exists an embedding $j\in\M(b,a)$. Note that, by Lemma~\ref{l:factorisation-properties}\ref{compositions}, $j\circ i\in\M(a,a)$. Lemma~\ref{l:left-can-group}, combined with the fact that every embedding is a monomorphism, entails that the set $\M(a,a)$ equipped with the composition operation is a group. So, $j\circ i$ has an inverse. It follows by Lemma~\ref{l:factorisation-properties}\ref{isos},\ref{cancellation-e} that $j$ is an isomorphism.
\end{proof}

\begin{remark}
A variant of Theorem~\ref{thm:right-combinatorial-cats}, where $\Q$ is the collection of extremal epimorphisms and $\M$ is the collection of monomorphisms, and each poset of embeddings $\M(a)$ is finite, was proved by Pultr \cite[Theorem~2.2]{pultr1973isomorphism} exploiting a direct generalisation of Lov\'{a}sz' original counting argument~\cite{Lovasz1967}.

Furthermore, reasoning along the same lines as in the previous proof, we get the following variant of Theorem~\ref{thm:right-combinatorial-cats} by applying Proposition~\ref{pr:pointwise-iso-kernel-pushouts} instead of Proposition~\ref{pr:pointwise-iso-kernel}: \emph{A locally finite category admitting pushouts and a proper factorisation system is right-combinatorial}. This result was first proved in \cite[Theorem~5]{DJR2021}.
\end{remark}

Theorem~\ref{thm:right-combinatorial-cats} admits a dual version (cf.\ Remark~\ref{rem:dual-proper-f-s} in the appendix):

\begin{theorem}\label{thm:left-combinatorial-cats}
Let $\A$ be a locally finite category admitting a proper factorisation system $(\Q,\M)$ such that $\A$ is $\M$-well-founded. Then $\A$ is left-combinatorial.
\end{theorem}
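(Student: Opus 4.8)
The plan is to obtain Theorem~\ref{thm:left-combinatorial-cats} as a formal consequence of Theorem~\ref{thm:right-combinatorial-cats} by dualising. Since, by definition, $\A$ is left-combinatorial exactly when $\A^\op$ is right-combinatorial, it suffices to check that $\A^\op$ meets the three hypotheses of Theorem~\ref{thm:right-combinatorial-cats}: local finiteness, existence of a proper factorisation system, and well-foundedness of the poset of quotients of each object.

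Local finiteness is immediate, as $\hom_{\A^\op}(a,b)=\hom_{\A}(b,a)$ is finite for all $a,b$. For the factorisation system, I would appeal to the (routine) fact recorded in the appendix (Remark~\ref{rem:dual-proper-f-s}) that if $(\Q,\M)$ is a proper factorisation system on $\A$, then $(\M^\op,\Q^\op)$ --- the classes of $\A^\op$-morphisms that are opposites of $\M$- and $\Q$-morphisms of $\A$, respectively --- is a proper factorisation system on $\A^\op$. Properness is the only thing to note here: a $\Q$-morphism of $\A$ is an epimorphism, hence its opposite is a monomorphism in $\A^\op$, so the right class $\Q^\op$ consists of monomorphisms; dually the left class $\M^\op$ consists of epimorphisms. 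Consequently, with respect to this factorisation system, the quotients of $\A^\op$ are precisely the opposites of the embeddings of $\A$, and the embeddings of $\A^\op$ are precisely the opposites of the quotients of $\A$.

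It remains to match up well-foundedness. For an object $a$, the identification of morphisms just described induces an isomorphism of posets between the poset of quotients of $a$ in $\A^\op$ (with respect to $(\M^\op,\Q^\op)$) and the poset $\M(a)$ of embeddings of $a$ in $\A$: an $\A^\op$-quotient $f^\op\colon a\to b$ corresponds to an $\A$-embedding $f\colon b\emb a$, and one checks that $f^\op$ factors through $g^\op$ in $\A^\op$ if and only if $f$ factors through $g$ in $\A$, so the orderings agree. Hence $\A^\op$ is well-founded with respect to its class of quotients precisely when $\A$ is $\M$-well-founded, which holds by hypothesis. Applying Theorem~\ref{thm:right-combinatorial-cats} to $\A^\op$ now gives that $\A^\op$ is right-combinatorial, i.e.\ $\A$ is left-combinatorial.

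I do not anticipate a genuine obstacle here: the content is entirely in Theorem~\ref{thm:right-combinatorial-cats}, and the rest is bookkeeping about opposite categories. The one place to be careful is the direction of the order isomorphism between the $\A^\op$-quotients of $a$ and $\M(a)$ --- it must be order-preserving (not order-reversing) for well-foundedness to transfer --- but since both orders are given by ``factors through'', this is automatic.
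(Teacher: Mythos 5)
Your proposal is correct and matches the paper's approach exactly: the paper gives no separate proof, deriving the theorem by dualising Theorem~\ref{thm:right-combinatorial-cats} via Remark~\ref{rem:dual-proper-f-s}, which records precisely the facts you check (the dual factorisation system on $\A^\op$ and the order isomorphism between $\M(a)$ and the poset of quotients of $a$ in $\A^\op$, both orders being ``factors through''). Your write-up simply makes the bookkeeping explicit.
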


We conclude this section with several examples of applications of Theorems~\ref{thm:right-combinatorial-cats} and~\ref{thm:left-combinatorial-cats}.

\begin{example}
Let $\V$ be any variety of universal algebras (regarded as a category, with morphisms the homomorphisms). By Theorems~\ref{thm:right-combinatorial-cats} and~\ref{thm:left-combinatorial-cats}, the full subcategory $\V_{\fin}$ of $\V$ consisting of the finite members is combinatorial. Just observe that, if $(\Q,\M)$ is the proper factorisation system consisting of surjective and injective homomorphisms, respectively, then $\V_{\fin}$ is both $\Q$-well-founded and $\M$-well-founded. For instance, the following classes of algebras are combinatorial: finite Boolean algebras, finite monoids, finite groups, and finite Abelian groups.
\end{example}

\begin{example}
Generalising the previous example, let $\C$ be any class of universal algebras closed under taking subalgebras. Then the usual factorisation system in the category of all algebras for the given algebraic signature, given by surjective and injective homomorphisms, respectively, restricts to $\C$. Thus, $\C$ is combinatorial. A similar fact holds if $\C$ is closed under taking homomorphic images. For example, the following classes of algebras are closed under homomorphic images and therefore combinatorial: finite regular semigroups, finite inverse semigroups, and finite $p$-groups (cf.\ \cite[Lemma~2.4.4]{Howie1995}, \cite[Lemma~7.35]{CP1967}, and \cite[Exercise~6(c) p.~32]{Serre2002}, respectively). 
\end{example}

\begin{example}\label{ex:relational-structures}
Let $\sigma$ be a relational signature, i.e.\ a (possibly infinite) set of relation symbols of finite arity, and let $\R(\sigma)$ be the category of $\sigma$-structures with their homomorphisms. Then the full subcategory $\R_{\fin}(\sigma)$ of $\R(\sigma)$ defined by the \emph{finite} $\sigma$-structures is combinatorial; for a finite signature~$\sigma$, this is precisely Lov\'{a}sz' homomorphism counting theorem~\cite{Lovasz1967}.

First, recall that epimorphisms and monomorphisms in $\R(\sigma)$ coincide, respectively, with the surjective and injective homomorphisms. Further, strong epimorphisms (respectively, strong monomorphisms) coincide with the epimorphisms (respectively, monomorphisms) that reflect the relation symbols. The same holds in $\R_{\fin}(\sigma)$.

Now, note that the factorisation system $(\Q,\M)$ in $\R_{\fin}(\sigma)$, where $\Q$ consists of the strong epimorphisms and $\M$ of the monomorphisms, is proper and $\R_{\fin}(\sigma)$ is $\Q$-well-founded. Thus, $\R_{\fin}(\sigma)$ is right-combinatorial by Theorem~\ref{thm:right-combinatorial-cats}. On the other hand, the factorisation system $(\Q',\M')$ in $\R_{\fin}(\sigma)$, where $\Q'$ consists of the epimorphisms and $\M'$ of the strong monomorphisms, is also proper and $\R_{\fin}(\sigma)$ is $\M'$-well-founded. So, $\R_{\fin}(\sigma)$ is left-combinatorial in view of Theorem~\ref{thm:left-combinatorial-cats}. 

Therefore, $\R_{\fin}(\sigma)$ is combinatorial. Observe that, when $\sigma$ is infinite, the category $\R_{\fin}(\sigma)$ need not be $\Q'$-well-founded nor $\M$-well-founded.
\end{example}

\section{Beyond Locally Finite Categories}\label{s:beyond-loc-finite}
In Theorem~\ref{thm:right-combinatorial-cats} we saw that a large class of locally finite categories is right-combinatorial. The main result of this section (Theorem~\ref{th:Lovasz-finite-type} below) is a `local' extension of this fact to categories that need not be locally finite. This result is then specialised to the case of locally finitely presentable categories.

\subsection{Nerves and hom-spaces}\label{s:nerves-hom-spaces}
Throughout this section we fix a category $\D$ admitting a proper factorisation system $(\Q,\M)$, and a full subcategory $\C$ of $\D$ such that:
\begin{enumerate}[label=(\roman*)]
\item $\C$ is a \emph{dense} subcategory of $\D$, i.e.\ every $a\in \D$ is the colimit of the canonical diagram given by the forgetful functor $\C\down a \to \D$.
\item $\C$ has all finite colimits and they are preserved by the inclusion functor $\C\hookrightarrow \D$.
\item For every composite $a \epi b \emb c$ in $\D$, if $a,c\in \C$ then also $b\in \C$. 
\end{enumerate}

\begin{remark}\label{rem:assumptions-dense-subcat}
Because $\C$ is closed in $\D$ under finite colimits by item~(ii), the diagrams of the form $\C\down a \to \D$ in item~(i) are automatically directed. 

Further, item~(iii) amounts to saying that the proper factorisation system $(\Q,\M)$ in $\D$ restricts to a (proper) factorisation system in $\C$. This condition is satisfied, e.g., if $\C$ is closed in $\D$ under $\Q$-images (i.e., given a quotient $a\epi b$ in $\D$, if $a\in \C$ then also $b\in\C$) or under $\M$-subobjects. 
Also, note that item~(iii) implies that $\C$ is closed in $\D$ under isomorphisms.
\end{remark}

Given an object $a\in\D$, let $N_a\colon \C^\op\to\Set$ be the restriction of the presheaf $\yo_a\colon \D^\op\to\Set$ to $\C^\op$.
We consider the \emph{nerve functor}
\[
N\colon \D\to \w{\C}, \quad a\mapsto N_a.
\]
In Theorem~\ref{th:Lovasz-finite-type}, we will identify a class of objects of $\D$ such that any two objects in this class are isomorphic precisely when their images under the nerve functor are pointwise isomorphic.

It is useful to observe that the nerve functor is full and faithful because $\C$ is a dense subcategory of $\D$ (see e.g.\ \cite[p.~218]{AHS1990}). Hence, $N$ is conservative (that is, it reflects isomorphisms). Explicitly, this means that a morphism $f\colon a\to b$ in $\D$ is an isomorphism if, and only if, the function
\[
f\circ -\colon N_a(c)\to N_b(c)
\]
is a bijection for every $c\in \C$.

Since we want to be able to count morphisms from objects of $\C$, it makes sense to restrict our attention to those objects of $\D$ that look `finite' from the viewpoint of every object of $\C$:
\begin{definition}
An object $a\in\D$ is of \emph{finite $\C$-type} provided that the set $\hom_{\D}(c,a)$ is finite for each $c\in\C$.
\end{definition}

\begin{example}
The previous definition makes sense for all categories $\D$ equipped with a (full) subcategory $\C$. For instance, let $\D$ be the opposite of the category of groups and group homomorphisms, and let $\C$ be the full subcategory of $\D$ defined by the finite groups. Then any finitely generated group is of finite $\C$-type, because there are finitely many homomorphisms from a finitely generated group to a finite one.
\end{example}

\begin{lemma}\label{lem:nerve-discrete-polyadic-space}
Let $a\in\D$ be an object of finite $\C$-type. The following statements hold:
\begin{enumerate}[label=(\alph*)]
\item\label{nerv-pol-set} The nerve $N_a\colon \C^\op\to\FinSet$ is a polyadic finite set.
\item\label{Stirling-nerve} The Stirling kernel of $N_a$ sends an object $c\in\C$ to the set $\M(c,a)$ of embeddings $c\emb a$.
\end{enumerate} 
\end{lemma}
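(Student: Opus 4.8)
The plan is to handle the two clauses separately: clause~(a) is formal, while clause~(b) carries the real content. For part~(a), note first that since $a$ is of finite $\C$-type, $N_a(c)=\hom_\D(c,a)$ is a finite set for every $c\in\C$, so $N_a$ does land in $\FinSet$. For the amalgamation property I would appeal to Lemma~\ref{l:amalg-quasi-pull}: $\C$ has pushouts (being closed under finite colimits), and by item~(ii) these are preserved by $\C\hookrightarrow\D$; since the representable $\yo_a\colon\D^\op\to\Set$ preserves limits, it sends pushout squares in $\D$ to pullback squares in $\Set$, so its restriction $N_a$ sends pushout squares in $\C$ to pullbacks, in particular to quasi-pullbacks, in $\Set$. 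Lemma~\ref{l:amalg-quasi-pull} then gives that $N_a$ has the amalgamation property, i.e.\ $N_a$ is a polyadic finite set on $\C$.

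For part~(b), observe that $\sk{N_a}(c)$ is by definition the set of generic elements of $N_a(c)=\hom_\D(c,a)$ relative to the factorisation system that item~(iii) induces on $\C$, so it suffices to prove that $f\colon c\to a$ is generic if and only if $f\in\M$; this is the exact analogue of Example~\ref{ex:kernel-repr}. The implication ``$f\in\M\Rightarrow f$ generic'' is straightforward: if $f=y\circ p$ with $p\colon c\epi m$ a quotient in $\C$ (hence in $\D$, as the factorisation system restricts) and $y\in N_a(m)$, then $p\in\M$ by the cancellation property Lemma~\ref{l:factorisation-properties}\ref{cancellation-m}, so $p$ is an isomorphism by Lemma~\ref{l:factorisation-properties}\ref{isos}, and in particular not a proper quotient.

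For the converse, suppose $f$ is generic and take its $(\Q,\M)$-factorisation $c\xrightarrow{g}d\xrightarrow{h}a$ in $\D$. The plan is to show $d\in\C$: granting this, $g$ is a quotient \emph{in $\C$} through which $f$ factors, so genericity forces $g$ to be an isomorphism, whence $f=h\circ g\in\M$ by Lemma~\ref{l:factorisation-properties}\ref{compositions},\ref{isos}. To obtain $d\in\C$ I would use density of $\C$ in $\D$: since $\C\down a$ is filtered (Remark~\ref{rem:assumptions-dense-subcat}), $f$ factors as $c\xrightarrow{\bar f}m\xrightarrow{u}a$ with $m\in\C$ and $\bar f$ a morphism of $\C$; after taking the $(\Q,\M)$-factorisation of $\bar f$ inside $\C$ and comparing the resulting factorisation of $f$ with $c\xrightarrow{g}d\xrightarrow{h}a$ through the unique diagonal fill-in of the orthogonal factorisation system (using the cancellation lemmas to track which legs lie in $\Q$ and which in $\M$), the goal is to sandwich $d$ in a composite $c'\epi d\emb c''$ with $c',c''\in\C$, so that $d\in\C$ by item~(iii).

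The step I expect to be the main obstacle is precisely this sandwiching. Producing the quotient $c'\epi d$ from an object of $\C$ is not the difficulty; the trouble is producing an embedding $d\emb c''$ into an object of $\C$: the embedding $h\colon d\emb a$ given by the factorisation does not suffice on its own because $a$ need not lie in $\C$, and the comparison legs $u\colon m\to a$ coming from the density diagram need not be monomorphisms, so the diagonal fill-ins have to be chained with some care to keep the relevant objects inside $\C$. Once $d\in\C$ is secured, the argument concludes as indicated, and then $\sk{N_a}(c)=\M(c,a)$.
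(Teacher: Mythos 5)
Part~(a) of your proposal is correct and essentially the paper's argument in a slightly more direct form: the paper first shows that the restriction to $\C^\op$ of \emph{any} polyadic set on $\D$ has the amalgamation property (using the amalgamation property of $F$ on $\D$ together with preservation of pushouts by $\C\hookrightarrow\D$, then Lemma~\ref{l:amalg-quasi-pull}), and specialises to $\yo_a$ via Lemma~\ref{l:repr-polyadic-set}; your shortcut through limit-preservation of representables reaches the same quasi-pullback condition. The easy half of part~(b) --- embeddings are generic, by cancellation Lemma~\ref{l:factorisation-properties}\ref{cancellation-m} --- is also fine.

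The gap is exactly where you flag it, and your proposal does not close it: in the direction ``generic $\Rightarrow$ embedding'' you need the middle object $d$ of the $(\Q,\M)$-factorisation $c\epi d\emb a$ to lie in $\C$, and your density-based ``sandwiching'' cannot produce an embedding $d\emb c''$ with $c''\in\C$, since the only embedding available is $d\emb a$ and $a$ need not lie in $\C$. The paper's own proof of~(b) is a one-line reduction to Example~\ref{ex:kernel-repr} and does not elaborate; the way the argument is meant to close is that $d$ is a $\Q$-image of $c\in\C$ and hence lies in $\C$, which is immediate in all of the paper's applications because there $\C$ is closed under $\Q$-images (cf.\ Remark~\ref{rem:assumptions-dense-subcat}; e.g.\ $\A_{\fg}$ is closed under strong epimorphic images). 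If you want to avoid assuming closure under $\Q$-images, the productive move is not density-plus-sandwiching but the following: if $f\notin\M$ then $g$ is a proper quotient; in the case where $g$ is not a monomorphism there are $u\neq v\colon e\to c$ with $g\circ u=g\circ v$, and by faithfulness of the nerve (a consequence of density) one may take $e\in\C$; the coequaliser $q\colon c\epi m$ of $u,v$ is computed in $\C$ by hypothesis~(ii), is a strong epimorphism (hence in $\Q$) and is not monic (hence a proper quotient), and $f$ factors through $q$ because $f\circ u=h\circ g\circ u=h\circ g\circ v=f\circ v$. This exhibits $f$ as degenerate without ever placing $d$ in $\C$. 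As written, however, your proof of~(b) is incomplete: you must either record the closure of $\C$ under $\Q$-images (true in the intended applications) or supply an argument of the above kind.
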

\begin{proof}
First we show that, for any polyadic set $F\colon \D^\op\to\Set$, its restriction to $\C^\op$ is also a polyadic set. Item (a) then follows at once from Lemma~\ref{l:repr-polyadic-set}.

Let $F\colon \D^\op\to\Set$ be an arbitrary polyadic set. Since $\C$ has pushouts, by Lemma~\ref{l:amalg-quasi-pull} it suffices to show that $F$ sends pushout squares in $\C$ to quasi-pullbacks in $\Set$. Consider a pushout square in $\C$ as on the left-hand side below and the corresponding square in $\Set$ as on the right-hand side.
\begin{equation*}
\begin{tikzcd}
{\cdot} \arrow{d}[swap]{g} \arrow{r}{f} & a \arrow{d} \\
b \arrow{r} & c
\end{tikzcd}
\ \ \ \ \ \ \ \ \ \ \ \ \ 
\begin{tikzcd}
F(\cdot) & F(a) \arrow{l}[swap]{F(f)} \\
F(b) \arrow{u}{F(g)} & F(c) \arrow{u} \arrow{l}
\end{tikzcd}
\end{equation*}
If $x\in F(a)$ and $y\in F(b)$ are such that $F(f)(x)=F(g)(y)$, by the amalgamation property of $F$ there exist $d\in \D$, morphisms $f'\colon b\to d$ and $g'\colon a\to d$ such that $f'\circ g=g'\circ f$, and an element $z\in F(d)$ such that $F(g')(z)=x$ and $F(f')(z)=y$. Because the inclusion $\C\hookrightarrow \D$ preserves pushouts, there exists a unique morphism $h\colon c\to d$ making the ensuing diagram commute. The element $F(h)(z)\in F(c)$ then witnesses the fact that the rightmost square above is a quasi-pullback.

Item (b) follows by reasoning as in Example~\ref{ex:kernel-repr}, using the fact that the factorisation system in $\D$ restricts to a proper factorisation system in $\C$.
\end{proof}

Consider any two objects $a,b\in\D$ and the canonical colimit cocone 
\[
\{b_i\to b\mid i\in I\}
\] 
in $\D$ where each $b_i$ belongs to $\C$. We have an isomorphism
\[
\yo_a(b)\cong \lim_{i\in I}\yo_a(b_i)
\]
in the category of sets. Let us assume that $a$ is of finite $\C$-type, so that each $\yo_a(b_i)=N_a(b_i)$ is finite. Note that the diagram consisting of the $b_i$'s is directed (see Remark~\ref{rem:assumptions-dense-subcat}). So, if we equip the sets $\yo_a(b_i)$ with the discrete topologies, the hom-set $\yo_a(b)$ carries a natural Stone (i.e.\ compact, Hausdorff and zero-dimensional) topology, namely the inverse limit topology. Explicitly, this is the topology generated by the sets of the form
\begin{equation*}
\begin{tikzcd}
\mathscr{U}_{\langle u,v\rangle}\coloneqq \{h\in \yo_a(b)\mid h\circ u=v\}
\end{tikzcd}
\quad \quad \quad \quad 
\begin{tikzcd}[column sep=1em]
{} & c \arrow{dl}[swap]{u} \arrow{dr}{v} & {} \\
b \arrow{rr}{h} & {} & a 
\end{tikzcd}
\end{equation*}
for $c\in \C$, $u\in N_b(c)$ and $v\in N_a(c)$.

Let us denote by $E_a(b)$ the `hom-space' obtained by endowing the set $\yo_a(b)$ with the Stone topology just described.\footnote{Although we shall not need this fact in the following, let us point out that the assignment $b\mapsto E_a(b)$ yields a functor $E_a\colon \D^\op\to\Stone$ which extends $\yo_a\colon \D^\op\to\Set$.}
Next, we prove some useful properties of the space of endomorphisms $E_a(a)$:

\begin{lemma}\label{lem:finite-type-topology}
The following statements hold for every $a\in\D$ of finite $\C$-type:
\begin{enumerate}[label=(\alph*)]
\item $E_a(a)$ is a topological monoid with respect to composition.
\item $\M(a,a)$ is a closed submonoid of $E_a(a)$.
\end{enumerate}
\end{lemma}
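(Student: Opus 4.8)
The plan is to read both statements off the explicit description of the topology on $E_a(a)$ as an inverse limit of finite discrete sets. For $c\in\C$ and $u\in\hom_\D(c,a)$ write $\mathrm{ev}_u\colon E_a(a)\to\hom_\D(c,a)$ for the map $h\mapsto h\circ u$. Since $a$ is of finite $\C$-type, $\hom_\D(c,a)$ is finite, and $\mathrm{ev}_u^{-1}(\{v\})=\mathscr{U}_{\langle u,v\rangle}$ is open by definition of the topology; hence $\mathrm{ev}_u$ is continuous for the discrete topology on its codomain --- in fact locally constant, so that each $\mathscr{U}_{\langle u,v\rangle}$ is clopen and $\mathrm{ev}_u$ pulls arbitrary subsets back to clopen sets. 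This observation underlies everything.

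\emph{Part (a).} Composition on $E_a(a)$ is associative with unit $\id_a$, so the only point is continuity of $\mu\colon E_a(a)\times E_a(a)\to E_a(a)$, $(h,k)\mapsto h\circ k$, and it suffices to pull back a subbasic open $\mathscr{U}_{\langle u,v\rangle}$ ($c\in\C$, $u,v\in\hom_\D(c,a)$). Splitting according to the value $w\coloneqq k\circ u\in\hom_\D(c,a)$ gives
\[
\mu^{-1}\bigl(\mathscr{U}_{\langle u,v\rangle}\bigr)=\bigcup_{w\in\hom_\D(c,a)}\bigl(\mathscr{U}_{\langle w,v\rangle}\times\mathscr{U}_{\langle u,w\rangle}\bigr),
\]
a finite union of products of opens, hence open. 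So $E_a(a)$ is a topological monoid.

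\emph{Part (b), the easy half.} That $\M(a,a)$ is a submonoid of $E_a(a)$ is immediate: identities are embeddings and $\M$ is closed under composition (Lemma~\ref{l:factorisation-properties}). For closedness the plan is to exhibit $\M(a,a)$ as an intersection of closed sets. For each $c\in\C$ and each embedding $u\colon c\emb a$, the set
\[
V_{c,u}\coloneqq\mathrm{ev}_u^{-1}\bigl(\M(c,a)\bigr)=\{\,h\in E_a(a)\mid h\circ u\text{ is an embedding}\,\}
\]
is clopen by the observation above, and I claim
\[
\M(a,a)=\bigcap_{c\in\C}\ \bigcap_{u\in\M(c,a)}V_{c,u}.
\]
The inclusion $\subseteq$ is just that a composite of embeddings is an embedding, and closedness of $\M(a,a)$ follows once the claim is proved.

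\emph{Part (b), the main obstacle.} The delicate direction is $\supseteq$: if $h\circ u$ is an embedding for every $c\in\C$ and every embedding $u\colon c\emb a$, then $h$ is an embedding. I would take the $(\Q,\M)$-factorisation $h=\iota\circ q$ with $q\colon a\epi d$ and $\iota\colon d\emb a$; since $\iota$ is monic, $\iota\circ-$ reflects embeddings (by uniqueness of proper factorisations, Lemma~\ref{l:factorisation-properties}), so the hypothesis says $q\circ u$ is an embedding for every embedding $u\colon c\emb a$ with $c\in\C$, and it is enough to show $q$ is an isomorphism --- equivalently, since the nerve functor $N$ is conservative, that $q\circ-\colon\hom_\D(c,a)\to\hom_\D(c,d)$ is bijective for every $c\in\C$. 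This is where density of $\C$ in $\D$ is genuinely needed: $a=\colim_{(c,u)\in\C\down a}c$ is a directed colimit, and I would factor each canonical cocone leg $u=\iota_u\circ e_u$ as $c\overset{e_u}{\epi}\bar c_u\overset{\iota_u}{\emb}a$, check that $a$ is also the colimit of the resulting diagram of embeddings $\{\iota_u\}$, and compare the quotient parts of $u$ and of $q\circ u$ to see that each $q\circ\iota_u$ is an embedding. Bijectivity of $q\circ-$ is then obtained by combining this with the fact that the hom-space $\hom_\D(d,a)$ is itself compact (being, by density, an inverse limit of the finite sets $\hom_\D(c',a)$), so that a ``dense subobject'' phenomenon forces $q$ to split --- exactly as a dense closed subgroup of a profinite group must be the whole group. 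The bookkeeping of which objects $\bar c_u$ actually lie in $\C$ (they need not, since $a\notin\C$ in general) together with the comparison of quotient parts is the technical heart of the argument; once $q$ is shown invertible, $h=\iota\circ q$ is an embedding and we are done.
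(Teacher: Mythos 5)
Part (a) is correct and is essentially the paper's argument (the paper exhibits, for each $(f,g)$ and each subbasic neighbourhood $\mathscr{U}_{\langle u,v\rangle}$ of $f\circ g$, the basic neighbourhood $\mathscr{U}_{\langle g\circ u,v\rangle}\times\mathscr{U}_{\langle u,g\circ u\rangle}$; your preimage formula is the same computation). The problem is in part (b). The inclusion $\supseteq$ of your claimed identity $\M(a,a)=\bigcap_{c,u}V_{c,u}$ is the entire content of closedness, and what you offer for it is a programme, not a proof: the reduction to ``$q$ is an isomorphism'' is fine, but the subsequent appeal to density, to compactness of $\hom_{\D}(d,a)$, and to a ``dense closed subgroup'' analogy never produces the required bijectivity of $q\circ-$. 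Worse, the claim itself is in doubt under the standing hypotheses of Section~\ref{s:nerves-hom-spaces}. Your argument needs $a$ to have \emph{enough} $\M$-subobjects with domain in $\C$ (e.g.\ so that any pair $x,y\colon c\to a$ factors through a common embedding $c'\emb a$ with $c'\in\C$). But assumption~(iii) only guarantees that the $\Q$-image of a morphism \emph{between objects of $\C$} lies in $\C$; the image $c'$ of $c+c\to a$ sits in a composite $c+c\epi c'\emb a$ whose codomain $a$ is generally not in $\C$, so nothing forces $c'\in\C$ --- this is exactly the ``bookkeeping'' you flag and do not resolve. In the degenerate situation where $a$ has few or no embedded $\C$-subobjects, your intersection is much larger than $\M(a,a)$ and the identity fails.

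The paper sidesteps all of this by proving directly that the set of non-embeddings is open, using \emph{arbitrary} morphisms from $\C$-objects rather than embeddings. If $f\in E_a(a)$ is not an embedding, then $f$ is not an isomorphism (isomorphisms lie in $\M$ by Lemma~\ref{l:factorisation-properties}\ref{isos}); since the nerve functor is conservative, there is $c\in\C$ such that $f\circ-\colon N_a(c)\to N_a(c)$ is not a bijection, and since $N_a(c)$ is a \emph{finite} set this self-map must fail to be injective. Choosing $u\neq v$ in $N_a(c)$ with $f\circ u=f\circ v$, the clopen set $\mathscr{U}_{\langle u,f\circ u\rangle}\cap\mathscr{U}_{\langle v,f\circ v\rangle}$ is a neighbourhood of $f$ containing no monomorphism, hence no embedding. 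This is where conservativity and finite $\C$-type actually do the work; I recommend you replace your part (b) with this argument (your observation that the $\mathscr{U}_{\langle u,v\rangle}$ are clopen is still the right starting point).
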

\begin{proof}
For item (a), we must prove that the composition operation 
\[
\circ \colon E_a(a)\times E_a(a)\to E_a(a), \enspace (f,g)\mapsto f\circ g
\] 
is continuous. Consider $(f,g)\in E_a(a)\times E_a(a)$ and an open neighbourhood $\mathscr{U}_{\langle u,v\rangle}$ of $f\circ g$. Then the set $\mathscr{U}_{\langle g\circ u,v\rangle}\times \mathscr{U}_{\langle u,g\circ u \rangle}$ is an open neighbourhood of $(f,g)$ whose image is contained in $\mathscr{U}_{\langle u,v\rangle}$. Just observe that, for all $(f',g')\in \mathscr{U}_{\langle g\circ u,v\rangle}\times \mathscr{U}_{\langle u,g\circ u \rangle}$, we have $f'\circ g'\circ u=f'\circ g\circ u=v$. Hence, the composition operation is continuous.

For item (b), suppose that $f\in E_a(a)$ is not an embedding. We must find an open neighbourhood $V$ of $f$ disjoint from $\M(a,a)$. Because $f$ is not an isomorphism and the nerve functor $N\colon \D\to\w{\C}$ is conservative, there exists $c\in\C$ such that the map 
\[
f\circ -\colon N_a(c)\to N_a(c)
\] 
is not a bijection. Since $N_a(c)$ is a finite set, $f\circ -$ cannot be injective. Hence, there exist distinct morphisms $u,v\in N_a(c)$ such that $f\circ u=f\circ v$. The set $V\coloneqq \mathscr{U}_{\langle u,f\circ u\rangle}\cap \mathscr{U}_{\langle v,f\circ v\rangle}$ is clearly an open neighbourhood of $f$. We claim that $V$ is disjoint from $\M(a,a)$. Assume, by way of contradiction, that $g\colon a\emb a$ is an embedding that belongs to $V$. Then $g\circ u=f\circ u=f\circ v=g\circ v$. As $g$ is a monomorphism we get $u=v$, a contradiction.
\end{proof}

\begin{remark}
Recall that a topological monoid is profinite if, and only if, its underlying space is Stone~\cite{Numakura1957}. Thus, it follows by Lemma~\ref{lem:finite-type-topology} that $E_a(a)$ and $\M(a,a)$ are profinite monoids.
\end{remark}

Lemma~\ref{l:left-can-group} for finite monoids admits a non-trivial generalisation to topological monoids, which we now recall. This result will be applied in (the proof of) Theorem~\ref{th:Lovasz-finite-type} to the topological monoids $\M(a,a)$ as in Lemma~\ref{lem:finite-type-topology}.
\begin{lemma}[{Numakura's Lemma~\cite[Lemma 2L]{Numakura1952}}]\label{lem:numakura-lemma} 
Let $S$ be a topological monoid whose topology is compact and Hausdorff. If $S$ satisfies the left-cancellation law then it is a group.
\end{lemma}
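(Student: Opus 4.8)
The plan is to prove that every $x\in S$ has a two-sided inverse. The starting point is the observation that the left-cancellation law says exactly that each left translation
\[
\lambda_x\colon S\to S,\qquad s\mapsto xs,
\]
is injective; moreover $\lambda_x$ is continuous, since the multiplication of $S$ is. As $S$ is compact and Hausdorff, $xS=\lambda_x(S)$ is a compact, hence closed, subset of $S$, and it is closed under multiplication because $(xs)(xs')=x(sxs')\in xS$. Thus $xS$ is itself a nonempty compact Hausdorff topological semigroup.

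The first real step is to show that every $x$ has a \emph{right} inverse, i.e.\ that $1\in xS$. For this I would use the fact that every nonempty compact Hausdorff semigroup contains an idempotent (the Ellis--Numakura lemma): applied to $xS$ it produces $e\in xS$ with $e^2=e$, and then $e\cdot e=e=e\cdot 1$, so left-cancelling $e$ gives $e=1$; hence $1\in xS$ and there is $y\in S$ with $xy=1$. If a self-contained argument is preferred, one proves the idempotent statement by a brief Zorn argument: among the nonempty closed subsemigroups of $xS$, ordered by reverse inclusion, every chain has an upper bound (its intersection, which is nonempty by compactness and the finite intersection property, closed, and a subsemigroup), so there is a minimal such subsemigroup $T$; for $t\in T$ the set $tT=\lambda_t(T)$ is a nonempty closed subsemigroup of $T$, hence $tT=T$ by minimality, so $t\in T=tT$ and therefore $\{s\in T\mid ts=t\}$ is nonempty; it is closed and a subsemigroup (if $ts=ts'=t$ then $t(ss')=(ts)s'=ts'=t$), so it equals $T$ by minimality, whence $tt=t$.

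Finally I would upgrade right inverses to two-sided inverses by the routine monoid argument: given $x$, pick $y$ with $xy=1$, then pick $z$ with $yz=1$; then
\[
x=x\cdot 1=x(yz)=(xy)z=1\cdot z=z,
\]
so $yx=yz=1$, and $y$ is a genuine inverse of $x$. Hence $S$ is a group.

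The main obstacle is the middle step, the production of the idempotent $e$: this is the only place where compactness is genuinely used, and it is the substantive content of the lemma — everything else is either a direct restatement of the hypotheses (injectivity of $\lambda_x$, continuity of $\lambda_x$) or elementary algebra in a monoid. It is worth checking the small topological points carefully, namely that $\lambda_x$ is continuous and that the continuous image of a compact space is closed in a Hausdorff space, but these are standard.
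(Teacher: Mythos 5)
The paper does not prove this lemma; it is quoted verbatim from Numakura's 1952 paper, so there is no internal proof to compare against. Your argument is correct and complete: $xS$ is a nonempty compact subsemigroup, the Zorn/minimal-closed-subsemigroup argument (Ellis--Numakura) produces an idempotent $e\in xS$, left cancellation forces $e=1$, so every element has a right inverse, and the elementary monoid computation $x = x(yz) = (xy)z = z$ upgrades this to two-sided inverses. All the small points you flag do hold as you state them: $\lambda_x$ is continuous because multiplication is jointly continuous, compact images are closed in a Hausdorff space, the intersection of a chain of nonempty closed subsemigroups is a nonempty closed subsemigroup by the finite intersection property, and both $tT$ and $\{s\in T\mid ts=t\}$ are closed subsemigroups of $T$. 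Note that your argument only ever uses continuity of the left translations, so it in fact proves the lemma for compact Hausdorff left-topological left-cancellative monoids, which is slightly more general than the statement; this is consistent with how the lemma is used in the paper, where the relevant monoids $\M(a,a)$ carry a jointly continuous composition.
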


We are interested in determining when two non-isomorphic objects $a,b\in\D$ of finite $\C$-type can be distinguished by counting the number of morphisms from objects of $\C$. To this end, we will assume that $a$ and $b$ satisfy an additional `separability' property. (The latter is satisfied by any object of a locally finitely presentable category $\D$, for an appropriate choice of the subcategory $\C$, cf.~the proof of Theorem~\ref{th:Lovasz-lfp-categories}.)

\begin{definition}\label{def:C-separable}
An object of $\D$ is \emph{$\C$-separable} if it is the colimit of a directed diagram $J$ in $\C$ such that:
\begin{enumerate}[label=(\roman*)]
\item The colimit cocone consists of embeddings.
\item For every compatible cocone of embeddings over $J$, the unique connecting morphism is also an embedding.
\end{enumerate}
\end{definition}

Intuitively, an object $a$ is $\C$-separable if there exists a (directed) family $S$ of $\M$-subobjects of $a$ such that, for all objects $d\in\D$, $a$ can be embedded into $d$ precisely when all objects in $S$ can be embedded coherently into $d$ (and, moreover, $a$ is the union of its subobjects in $S$).  

\begin{remark}\label{rem:small-diag-embeddings}
Note that, by Lemma~\ref{l:factorisation-properties}\ref{cancellation-m}, item~(i) in the previous definition implies that the diagram $J$ consists entirely of embeddings.
\end{remark}

We are now in a position to prove the main result of this section:

\begin{theorem}\label{th:Lovasz-finite-type}
For any two $\C$-separable objects $a,b\in\D$ of finite $\C$-type, 
\[
a\cong b \enspace \Longleftrightarrow \enspace \hom_{\D}(c,a)\cong \hom_{\D}(c,b) \ \text{ for all } c\in \C.
\]
\end{theorem}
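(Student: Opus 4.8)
The left-to-right implication is trivial, so the plan is to prove the converse by mimicking the proof of Theorem~\ref{thm:right-combinatorial-cats}, but carried out at the level of the subcategory $\C$. Assume $\hom_\D(c,a)\cong\hom_\D(c,b)$ for every $c\in\C$; equivalently, the nerves $N_a,N_b\colon\C^\op\to\FinSet$ are pointwise isomorphic. First I would descend this to the Stirling kernels: by Lemma~\ref{lem:nerve-discrete-polyadic-space}\ref{nerv-pol-set} the nerves $N_a$ and $N_b$ are polyadic finite sets on $\C$, and by part~\ref{Stirling-nerve} their Stirling kernels are the functors $c\mapsto\M(c,a)$ and $c\mapsto\M(c,b)$ on $\C_*^\op$. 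Since $\C$ has all finite colimits, and in particular pushouts, Proposition~\ref{pr:pointwise-iso-kernel-pushouts} applies and gives $\M(c,a)\cong\M(c,b)$ for all $c\in\C$.

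Next I would produce an embedding $a\emb b$. Write $a=\colim_{i\in I}a_i$ for a directed diagram $J=(a_i)_{i\in I}$ in $\C$ witnessing $\C$-separability of $a$; by Remark~\ref{rem:small-diag-embeddings} every connecting morphism of $J$ is an embedding, so precomposition turns $(\M(a_i,b))_{i\in I}$ into an inverse system of sets (a composite of embeddings being an embedding, by Lemma~\ref{l:factorisation-properties}). Each $\M(a_i,b)$ is finite because $b$ is of finite $\C$-type, and non-empty because $\M(a_i,a)$ contains the colimit-cocone embedding $a_i\emb a$ and hence, by the previous paragraph, $\M(a_i,b)\neq\emptyset$. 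By the standard compactness fact that an inverse limit of non-empty finite sets over a directed index set is non-empty, $\lim_{i\in I}\M(a_i,b)$ is inhabited; an element of it is exactly a compatible family of embeddings $\{a_i\emb b\}_{i\in I}$, i.e.\ a cocone of embeddings over $J$, so condition~(ii) in the definition of $\C$-separability forces the induced morphism $a\to b$ to be an embedding. Running the same argument with the roles of $a$ and $b$ exchanged (using that $b$ is $\C$-separable of finite $\C$-type and that $\M(c,a)\cong\M(c,b)$), I obtain embeddings $i\colon a\emb b$ and $j\colon b\emb a$.

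Then I would upgrade these to an isomorphism exactly as in the locally finite case. The composite $j\circ i$ is an embedding $a\emb a$, hence an element of the monoid $\M(a,a)$. By Lemma~\ref{lem:finite-type-topology}, $\M(a,a)$ is a closed submonoid of the compact Hausdorff topological monoid $E_a(a)$, so it is itself compact and Hausdorff, and it is left-cancellative because embeddings are monomorphisms. Numakura's Lemma~\ref{lem:numakura-lemma} then shows $\M(a,a)$ is a group, so $j\circ i$ is invertible in $\D$. Consequently $j$ is a split epimorphism; being also a monomorphism, $j$ is an isomorphism (a split epimorphism that is monic is invertible, as used in the proof of Theorem~\ref{thm:right-combinatorial-cats}), and therefore $a\cong b$.

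The hard part is the second paragraph. The hypothesis only supplies, for each individual $c\in\C$, a bijection between the finite sets $\M(c,a)$ and $\M(c,b)$, with no compatibility as $c$ varies, whereas what is needed is one global embedding $a\emb b$. The role of $\C$-separability is precisely to bridge this gap: it lets one replace the sought embedding $a\emb b$ by a coherent family of embeddings out of the approximating diagram $J$, and the existence of such a family then follows from the compactness (König-type) statement that an inverse limit of non-empty finite sets is non-empty. Everything else is a faithful transcription of the argument for Theorem~\ref{thm:right-combinatorial-cats}, with Lemma~\ref{lem:finite-type-topology} and Numakura's Lemma doing the work that finiteness of $\M(a)$ and Lemma~\ref{l:left-can-group} did there.
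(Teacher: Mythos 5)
Your proposal is correct and follows essentially the same route as the paper's proof: reduce pointwise isomorphism of the nerves to pointwise isomorphism of their Stirling kernels via Proposition~\ref{pr:pointwise-iso-kernel-pushouts} and Lemma~\ref{lem:nerve-discrete-polyadic-space}, use $\C$-separability together with the non-emptiness of an inverse limit of non-empty finite sets to produce embeddings in both directions, and conclude with Numakura's Lemma applied to the compact Hausdorff monoid $\M(a,a)$. The only cosmetic difference is that the paper packages the inverse-limit step as a homeomorphism $\M(b,a)\cong\lim_{i}\M(b_i,a)$ of Stone spaces, whereas you argue directly with the inverse system of finite sets; the content is identical.
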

\begin{proof}
Let $a,b\in \D$ be as in the statement. For the non-trivial direction, suppose that $\hom_{\D}(c,a)\cong \hom_{\D}(c,b)$ for all $c\in\C$. By assumption, $b$ is the colimit of a directed diagram $\{b_i\mid i\in I\}$ in $\C$ satisfying the conditions in Definition~\ref{def:C-separable}.
Therefore, $b\cong \colim_{D} b_i$ entails
\begin{equation*}
\yo_a(b)\cong \lim_{i\in I} N_a(b_i),
\end{equation*}
i.e.\ the map associating with a compatible cocone $\{b_i\to a\mid i\in I\}$ the unique connecting morphism $b\to a$ is a bijection. If each finite set $N_a(b_i)$ is equipped with the discrete topology, then the induced inverse limit topology $\tau$ on $\yo_a(b)$ coincides with the topology of $E_a(b)$ and so we have a homeomorphism of Stone spaces 
\begin{equation}\label{eq:homeo-E_a(b)-inv-lim}
E_a(b)\cong \lim_{i\in I} N_a(b_i).
\end{equation}
Just observe that the diagram of the $b_i$'s is a subdiagram of the canonical diagram given by (the image of) the forgetful functor $\C\down b \to \D$, and so the identity map $E_a(b)\to (\yo_a(b),\tau)$ is continuous. In turn, since any two comparable compact Hausdorff topologies on a set must coincide, the two topologies are one and the same.

The space $\lim_{i\in I} \M(b_i,a)$, endowed with the inverse limit topology, can be identified with a subspace of $\lim_{i\in I} N_a(b_i)$. We claim that the homeomorphism in~\eqref{eq:homeo-E_a(b)-inv-lim} restricts to a homeomorphism between $\M(a,b)$ and $\lim_{i\in I} \M(b_i,a)$.
If a compatible cocone $\{b_i\to a\mid i\in I\}$ induces a connecting morphism $b\to a$ that is an embedding then Lemma~\ref{l:factorisation-properties}\ref{compositions}, combined with item~(i) in the definition of a $\C$-separable object and Remark~\ref{rem:small-diag-embeddings}, entails that each $b_i\to a$ is an embedding. Conversely, if $\{b_i\to a\mid i\in I\}$ is a compatible cocone consisting of embeddings then the unique connecting morphism is an embedding by item~(ii) in the definition of a $\C$-separable object. Hence, $\M(b,a)$ is homeomorphic to $\lim_{i\in I} \M(b_i,a)$. 

By assumption, the nerves $N_a, N_b\colon \C^\op\to\FinSet$ are pointwise isomorphic. It follows by Proposition~\ref{pr:pointwise-iso-kernel-pushouts} and Lemma~\ref{lem:nerve-discrete-polyadic-space}\ref{nerv-pol-set} that their Stirling kernels are also pointwise isomorphic. So, by Lemma~\ref{lem:nerve-discrete-polyadic-space}\ref{Stirling-nerve},  
\[
\M(b_i,a)\cong \M(b_i,b)
\] 
for every $i\in I$. As the sets $\M(b_i,b)$ are non-empty (they contain the colimit maps), we conclude that the space $\M(b,a)$ is the inverse limit of non-empty finite discrete spaces and thus $\M(b,a)\neq\emptyset$ (see e.g.\ \cite[Theorem~2-85]{HY1988}). That is, there exists an embedding $\alpha\colon b\emb a$. By symmetry, there exists also an embedding $\beta\colon a\emb b$.

The composite $\alpha\circ \beta$ belongs to the monoid $\M(a,a)$, which is a Stone topological monoid by Lemma~\ref{lem:finite-type-topology}. It follows by Numakura's Lemma that $\alpha\circ\beta$ has an inverse  and so $\alpha$ is an isomorphism by Lemma~\ref{l:factorisation-properties}\ref{isos},\ref{cancellation-e}.
\end{proof}

\subsection{Locally finitely presentable categories}\label{s:lfp}
In this section we specialise Theorem~\ref{th:Lovasz-finite-type} to the case of locally finitely presentable categories. To start with, we recall some basic definitions; for a more thorough treatment, the reader can consult e.g.~\cite{AR1994}. 

An object $a$ of a category $\A$ is \emph{finitely presentable} (respectively, \emph{finitely generated}) if the associated covariant hom-functor 
\[
\hom_{\A}(a,-)\colon \A\to\Set
\]
preserves directed colimits (respectively, directed colimits of monomorphisms). A category $\A$ is said to be \emph{locally finitely presentable} if it is cocomplete, every object is a directed colimit of finitely presentable objects, and there exists, up to isomorphism, only a set of finitely presentable objects.

Let $\A$ be a locally finitely presentable category. Then $\A$ admits a proper factorisation system $(\Q,\M)$ where $\Q$ consists of the strong epimorphisms and $\M$ of the monomorphisms. Further, an object $a\in \A$ is finitely generated if, and only if, there exist a finitely presentable object $b\in \A$ and a quotient (i.e., a strong epimorphism) $b\epi a$. For a proof of these facts see, e.g., \cite[Propositions~1.61 and~1.69(ii)]{AR1994}. Throughout, we denote by $\A_{\fp}$ and $\A_{\fg}$ the full subcategories of $\A$ consisting, respectively, of the finitely presentable and finitely generated objects.

In this context, the role of the dense subcategory $\C$ in Theorem~\ref{th:Lovasz-finite-type} is played by $\A_{\fg}$. However, it is an easy observation that any object of finite $\A_{\fp}$-type is also of finite $\A_{\fg}$-type:
\begin{lemma}\label{l:finite-fp-fg-type}
Let $\A$ be a locally finitely presentable category. An object of $\A$ is of finite $\A_{\fp}$-type if, and only if, it is of finite $\A_{\fg}$-type.
\end{lemma}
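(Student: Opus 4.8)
The plan is to prove both implications directly from the characterisation of finitely generated objects in a locally finitely presentable category. Recall that, by \cite[Proposition~1.69(ii)]{AR1994}, an object $a\in\A$ is finitely generated if and only if there exist a finitely presentable object $b\in\A$ and a quotient (strong epimorphism) $q\colon b\epi a$. So every finitely generated object is a quotient of a finitely presentable one; this is the bridge between the two subcategories.

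The easy direction is: if $a$ is of finite $\A_{\fg}$-type, then it is of finite $\A_{\fp}$-type. This is immediate because $\A_{\fp}\subseteq \A_{\fg}$ (every finitely presentable object is finitely generated, e.g.\ via the identity quotient), so finiteness of $\hom_\A(c,a)$ for all $c\in\A_{\fg}$ specialises to finiteness of $\hom_\A(c,a)$ for all $c\in\A_{\fp}$.

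For the converse, suppose $a$ is of finite $\A_{\fp}$-type and let $c\in\A_{\fg}$ be arbitrary; I must show $\hom_\A(c,a)$ is finite. Pick a finitely presentable object $b$ together with a strong epimorphism $q\colon b\epi c$. Precomposition with $q$ yields a function
\[
q^*\colon \hom_\A(c,a)\to \hom_\A(b,a), \quad f\mapsto f\circ q.
\]
Since $q$ is an epimorphism, $q^*$ is injective: if $f\circ q = g\circ q$ then $f=g$. Hence $|\hom_\A(c,a)|\le |\hom_\A(b,a)|$, and the right-hand side is finite because $b\in\A_{\fp}$ and $a$ is of finite $\A_{\fp}$-type. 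Therefore $\hom_\A(c,a)$ is finite, as desired.

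There is no real obstacle here: the only point requiring the structure theory of locally finitely presentable categories is the fact that every finitely generated object admits a strong-epi from a finitely presentable one, and this is quoted as \cite[Proposition~1.69(ii)]{AR1994}. The remainder is the elementary observation that precomposition along an epimorphism is injective on hom-sets, combined with the trivial inclusion $\A_{\fp}\subseteq\A_{\fg}$.
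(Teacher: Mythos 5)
Your proof is correct and follows essentially the same argument as the paper: take a strong epimorphism from a finitely presentable object onto the given finitely generated one, and use that precomposition along an epimorphism is injective on hom-sets. The paper leaves the easy direction implicit, but your treatment of it (via $\A_{\fp}\subseteq\A_{\fg}$) is fine.
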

\begin{proof}
For the non-trivial direction, suppose that $a$ is of finite $\A_{\fp}$-type and consider an arbitrary $b\in \A_{\fg}$. If $f\colon c\epi b$ is a quotient with $c$ finitely presentable, the map $-\circ f \colon \hom_{\A}(b,a)\to \hom_{\A}(c,a)$ is injective. Since the latter set is finite, so is the former.
\end{proof}

\begin{theorem}\label{th:Lovasz-lfp-categories}
Let $\A$ be a locally finitely presentable category. For any two objects $a,b\in\A$ of finite $\A_{\fp}$-type, 
\[
a\cong b \enspace \Longleftrightarrow \enspace \hom_{\A}(c,a)\cong \hom_{\A}(c,b) \ \text{ for all } c\in \A_{\fg}.
\]
\end{theorem}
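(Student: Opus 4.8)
The plan is to obtain Theorem~\ref{th:Lovasz-lfp-categories} as a direct application of Theorem~\ref{th:Lovasz-finite-type}, with the choices $\D\coloneqq\A$ and $\C\coloneqq\A_\fg$. Everything then reduces to checking that this pair satisfies the standing hypotheses of Section~\ref{s:nerves-hom-spaces}, together with the two object-level hypotheses (finite $\C$-type and $\C$-separability) for the given $a,b$. The proper factorisation system on $\A$ is the one by strong epimorphisms and monomorphisms already recalled in the text.

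For the density condition~(i), I would use the standard fact that $\A_\fp$ is a dense subcategory of any locally finitely presentable category~\cite{AR1994}, together with the observation that a full subcategory containing a dense subcategory is itself dense; since $\A_\fp\subseteq\A_\fg$, density of $\A_\fg$ follows. For condition~(ii), I would verify that $\A_\fg$ is closed in $\A$ under finite colimits: the initial object is finitely presentable, hence finitely generated; a binary coproduct of finitely generated objects is a strong quotient of a (finitely presentable) coproduct of finitely presentable objects; a coequaliser of a parallel pair between finitely generated objects is a strong quotient of its finitely generated codomain; and a strong quotient of a finitely generated object is again finitely generated, since a composite of strong epimorphisms is a strong epimorphism. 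This last observation also gives condition~(iii): in a composite $a\epi b\emb c$ with $a\in\A_\fg$, the object $b$ is a strong quotient of $a$, hence lies in $\A_\fg$.

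It then remains to deal with the two objects $a,b$. That they are of finite $\A_\fg$-type is exactly Lemma~\ref{l:finite-fp-fg-type}. For $\C$-separability (Definition~\ref{def:C-separable}), I would take $J$ to be the directed poset of finitely generated subobjects of the object in question, equipped with the subobject inclusions: this poset is directed because the image of the coproduct of two finitely generated subobjects is a finitely generated subobject containing both, and the object is the colimit of this diagram by another standard property of locally finitely presentable categories~\cite{AR1994}. Condition~(i) of Definition~\ref{def:C-separable} holds by construction, and for condition~(ii) I would argue that, given a compatible cocone of embeddings $\{b_i\emb d\}$ over $J$, the induced morphism $f$ out of the colimit is a monomorphism by testing against finitely presentable objects: any parallel pair out of a finitely presentable object into the colimit factors, by directedness, through a single $b_i$, and equality after postcomposition with $f$ then follows from $b_i\emb d$ being monic; since monomorphisms are detected by finitely presentable objects, $f$ is monic. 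With all hypotheses verified, Theorem~\ref{th:Lovasz-finite-type} yields precisely the stated equivalence. The only real work lies in these bookkeeping verifications — chiefly the closure properties of $\A_\fg$ and the monomorphism-detection argument for $\C$-separability — and none of it requires ideas beyond those already in place in Section~\ref{s:beyond-loc-finite}.
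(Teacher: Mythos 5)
Your proposal is correct and follows essentially the same route as the paper: both obtain the theorem by applying Theorem~\ref{th:Lovasz-finite-type} with $\D\coloneqq\A$ and $\C\coloneqq\A_{\fg}$ (under the (strong epi, mono) factorisation system), verifying density of $\A_{\fg}$, its closure under finite colimits and strong quotients, finite $\A_{\fg}$-type via Lemma~\ref{l:finite-fp-fg-type}, and $\A_{\fg}$-separability of every object. The only difference is that the paper cites \cite{AR1994} for the closure properties and the separability statement, whereas you sketch direct arguments for them; these sketches are sound.
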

\begin{proof}
Recall that $\A_{\fg}$ is a dense subcategory of $\A$ because so is $\A_{\fp}$ (see e.g.\ \cite[Proposition~1.22]{AR1994}), and it is closed in $\A$ under $\Q$-images (see e.g.\ \cite[Proposition~1.69(i)]{AR1994}). Further, it is a folklore result that $\A_{\fg}$ has all finite colimits and these are preserved by the inclusion functor $\A_{\fg}\hookrightarrow \A$.
Finally, every object of $\A$ is $\A_{\fg}$-separable (cf.\ \cite[Proposition~1.62 and Theorem~1.70]{AR1994}). Therefore, an application of Theorem~\ref{th:Lovasz-finite-type} with $\D\coloneqq \A$ and $\C\coloneqq \A_{\fg}$, combined with Lemma~\ref{l:finite-fp-fg-type}, yields the statement.
\end{proof}

We now specialise Theorem~\ref{th:Lovasz-lfp-categories} to ind- and pro-categories. The ensuing results are then applied in concrete cases in Section~\ref{s:examples}. A further application of Theorem~\ref{th:Lovasz-lfp-categories}, this time to categories of coalgebras for certain comonads, is presented in Section~\ref{s:coalg-finite-rank}.

Given a (essentially small) category $\C$, denote its \emph{ind-completion} and \emph{pro-completion} by $\ind(\C)$ and $\pro(\C)$, respectively (see e.g.~\cite[\S VI.1]{Johnstone1986}). These are, respectively, the free cocompletion of $\C$ under filtered colimits, and the free completion of $\C$ under cofiltered limits. Up to an equivalence of categories, we can and will identify $\C$ with a full subcategory of $\ind(\C)$ and $\pro(\C)$, respectively.

\begin{corollary}\label{cor:counting-indC}
Let $\C$ be an essentially small category with finite colimits that is closed under strong epimorphic images in $\ind(\C)$. For all objects $a,b\in\ind(\C)$ of finite $\C$-type, 
\[
a\cong b \enspace \Longleftrightarrow \enspace \hom_{\ind(\C)}(c,a)\cong \hom_{\ind(\C)}(c,b) \ \text{ for all } c\in \C.
\]
\end{corollary}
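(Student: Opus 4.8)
The plan is to obtain Corollary~\ref{cor:counting-indC} as a direct consequence of Theorem~\ref{th:Lovasz-lfp-categories}, applied with $\A\coloneqq\ind(\C)$. First I would invoke the classical facts that, since $\C$ is essentially small and has finite colimits, the ind-completion $\ind(\C)$ is locally finitely presentable and its full subcategory $\A_{\fp}$ of finitely presentable objects is (equivalent to) the idempotent completion of $\C$; in particular, $\C\subseteq\A_{\fp}$ and every finitely presentable object of $\ind(\C)$ is a retract of an object of $\C$ (see e.g.~\cite{AR1994} and~\cite[\S VI.1]{Johnstone1986}).

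The crux of the argument is the identification $\A_{\fg}=\C$, up to isomorphism. One direction is immediate from $\C\subseteq\A_{\fp}\subseteq\A_{\fg}$. For the converse, let $a\in\A$ be finitely generated. By the characterisation of finitely generated objects in a locally finitely presentable category recalled in Section~\ref{s:lfp} (cf.~\cite[Propositions~1.61 and~1.69(ii)]{AR1994}), there is a quotient $b\epi a$ with $b$ finitely presentable, hence a retract of some $c\in\C$. A retraction is a split epimorphism, and therefore a (regular, hence) strong epimorphism; composing it with $b\epi a$ yields a quotient $c\epi a$ (Lemma~\ref{l:factorisation-properties}\ref{compositions}) with $c\in\C$. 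Since $\C$ is assumed closed under strong epimorphic images in $\ind(\C)$, it follows that $a$ is isomorphic to an object of $\C$, giving $\A_{\fg}\subseteq\C$ up to isomorphism.

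Granting this, the rest is bookkeeping. As $\A_{\fg}$ and $\C$ have the same objects up to isomorphism, an object of $\A$ is of finite $\C$-type precisely when it is of finite $\A_{\fg}$-type, which by Lemma~\ref{l:finite-fp-fg-type} is the same as being of finite $\A_{\fp}$-type. Hence $a$ and $b$ are of finite $\A_{\fp}$-type, and Theorem~\ref{th:Lovasz-lfp-categories} gives $a\cong b$ if and only if $\hom_{\ind(\C)}(c,a)\cong\hom_{\ind(\C)}(c,b)$ for all $c\in\A_{\fg}$, equivalently for all $c\in\C$, since hom-counting from the source object is insensitive to isomorphism. I expect the only genuinely delicate point to be the identification $\A_{\fg}=\C$: one must check that the hypothesis on closure under strong epimorphic images is exactly strong enough, which hinges precisely on the Cauchy-completion description of $(\ind\C)_{\fp}$ — so that finitely presentable objects, though not necessarily lying in $\C$ on the nose, are at least retracts of objects of $\C$.
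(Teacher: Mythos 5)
Your proposal is correct and follows essentially the same route as the paper: identify the finitely generated objects of $\ind(\C)$ with the objects of $\C$ up to isomorphism and then invoke Theorem~\ref{th:Lovasz-lfp-categories}. The only (harmless) variation is that the paper uses idempotent-completeness of $\C$ (which has finite colimits) to conclude that finitely presentable objects of $\ind(\C)$ are already isomorphic to objects of $\C$, whereas you keep them as retracts and absorb the splitting into the strong-epimorphic-image argument.
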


\begin{proof}
If $\C$ is essentially small and has finite colimits then $\ind(\C)$ is a locally finitely presentable category; see e.g.\ \cite[Corollary~VI.1.3]{Johnstone1986} and \cite[Theorem~1.46]{AR1994}. 
Furthermore, every finitely presentable object of $\ind(\C)$ is isomorphic to an object of $\C$ (this follows from the fact that $\C$ has finite colimits, hence it is idempotent-complete, combined with \cite[Exercise~6.1(iii)]{KS2006}). The same is true of finitely generated objects of $\ind(\C)$, as $\C$ is closed under strong epimorphic images in $\ind(\C)$. Therefore, the statement follows directly from Theorem~\ref{th:Lovasz-lfp-categories}.
\end{proof}

We record for future reference the dual version of Corollary~\ref{cor:counting-indC}. 
\begin{corollary}\label{cor:counting-proC}
Let $\C$ be an essentially small category with finite limits that is closed under strong subobjects in $\pro(\C)$. Let $a,b\in\pro(\C)$ be such that the sets $\hom_{\pro(\C)}(a,c)$ and $\hom_{\pro(\C)}(b,c)$ are finite for all $c\in\C$. Then
\[
a\cong b \enspace \Longleftrightarrow \enspace \hom_{\pro(\C)}(a,c)\cong \hom_{\pro(\C)}(b,c) \ \text{ for all } c\in \C.
\]
\end{corollary}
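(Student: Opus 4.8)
The plan is to obtain Corollary~\ref{cor:counting-proC} from Corollary~\ref{cor:counting-indC} by passing to opposite categories. First I would recall the standard fact that $\pro(\C) \simeq (\ind(\C^\op))^\op$ for any essentially small category $\C$ (see e.g.~\cite[\S VI.1]{Johnstone1986}), which converts all the hypotheses and conclusions into their $\ind$-counterparts under the contravariant equivalence. Under this identification, a hom-set $\hom_{\pro(\C)}(a,c)$ becomes $\hom_{\ind(\C^\op)}(c, a)$ where now $c$ ranges over $\C^\op \hookrightarrow \ind(\C^\op)$, so the finiteness hypothesis on $\hom_{\pro(\C)}(a,c)$ and $\hom_{\pro(\C)}(b,c)$ for all $c\in\C$ translates exactly into $a$ and $b$ being of finite $\C^\op$-type in $\ind(\C^\op)$, and the isomorphism condition on hom-sets likewise transfers verbatim. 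Since isomorphism of objects is preserved and reflected by any equivalence (in particular by $(-)^\op$), the biconditional conclusion transports as well.

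The remaining point is to check that the hypotheses of Corollary~\ref{cor:counting-indC} hold for $\C^\op$. The assumption that $\C$ has finite limits says precisely that $\C^\op$ has finite colimits, so that part is immediate. The assumption that $\C$ is closed under strong subobjects in $\pro(\C)$ needs to be unwound: a strong subobject of $c\in\C$ inside $\pro(\C) \simeq (\ind(\C^\op))^\op$ corresponds, under the equivalence, to a strong epimorphic quotient of $c$ inside $\ind(\C^\op)$ — recall that strong monomorphisms and strong epimorphisms swap roles under $(-)^\op$. Hence ``$\C$ closed under strong subobjects in $\pro(\C)$'' is exactly ``$\C^\op$ closed under strong epimorphic images in $\ind(\C^\op)$'', which is the hypothesis required by Corollary~\ref{cor:counting-indC}. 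With all hypotheses verified, applying Corollary~\ref{cor:counting-indC} to $\C^\op$ and reading the result back through the equivalence $\pro(\C) \simeq (\ind(\C^\op))^\op$ gives the statement.

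I expect the only mild obstacle to be bookkeeping: making sure the variances line up correctly (in particular that ``finite $\C$-type'' on the $\pro$-side really becomes ``finite $\C^\op$-type'' on the $\ind$-side, and that the hom-set isomorphism condition matches up with the correct variable placed in the correct argument), and confirming the standard fact that $\C\hookrightarrow\pro(\C)$ corresponds under the duality to $\C^\op\hookrightarrow\ind(\C^\op)$ as a full dense subcategory. None of this is substantive; since the statement is explicitly labelled as ``the dual version of Corollary~\ref{cor:counting-indC}'', a one-line proof reading simply ``Apply Corollary~\ref{cor:counting-indC} to $\C^\op$, using $\pro(\C)\simeq\ind(\C^\op)^\op$ and the fact that strong subobjects in $\pro(\C)$ correspond to strong epimorphic images in $\ind(\C^\op)$'' is essentially the whole argument.
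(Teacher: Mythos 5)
Your proof is correct and is exactly what the paper intends: the corollary is stated as the dual of Corollary~\ref{cor:counting-indC} with no written proof, the implicit argument being precisely the passage through $\pro(\C)\simeq(\ind(\C^\op))^\op$ that you spell out. Your bookkeeping (finite $\C$-type versus finite $\C^\op$-type, strong subobjects versus strong epimorphic images) is all accurate.
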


\subsection{Coalgebras for comonads of finite rank}\label{s:coalg-finite-rank}
In this section we specialise Theorem~\ref{th:Lovasz-lfp-categories} to categories of coalgebras for comonads on locally finitely presentable categories. 

Whenever $T$ is a comonad on a category $\A$, we write $\EM(T)$ for the category of Eilenberg-Moore coalgebras for $T$, and 
\[\begin{tikzcd}[column sep=1.5em]
\A \arrow[yshift=-6pt]{rr}[swap]{F} & {\text{\scriptsize{$\bot$}}} & \EM(T) \arrow[yshift=6pt]{ll}[swap]{U}
\end{tikzcd}\]
for the associated adjunction. For the latter notions, see Appendix~\ref{s:finite-rank}.
In the next result, we will assume that $\A$ is locally finitely presentable and $T$ is of \emph{finite rank}, and so the category $\EM(T)$ is also locally finitely presentable. Comonads of finite rank were defined by Diers in~\cite{Diers1986}; for a definition and some basic facts, we refer the reader to Appendix~\ref{s:finite-rank}. 

\begin{corollary}\label{cor:hom-counting-comonads-finite-rank}
Let $\A$ be a locally finitely presentable category and let $T$ be a comonad of finite rank on $\A$ with associated adjunction $U\dashv F$. The following statements are equivalent for all objects $a,b\in\A$ of finite $\A_{\fp}$-type: 
\begin{enumerate}
\item $F(a)\cong F(b)$.
\item For all $x\in \EM(T)_{\fg}$, 
\[
\hom_{\A}(U(x),a)\cong \hom_{\A}(U(x),b).
\]
\end{enumerate}
\end{corollary}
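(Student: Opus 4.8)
The plan is to deduce Corollary~\ref{cor:hom-counting-comonads-finite-rank} by applying Theorem~\ref{th:Lovasz-lfp-categories} inside the category $\EM(T)$ rather than $\A$, transporting the hypotheses across the adjunction $U\dashv F$. First I would recall the two standard facts (to be placed in the appendix on comonads of finite rank): when $T$ has finite rank, $\EM(T)$ is again locally finitely presentable, and the right adjoint $F\colon \A\to\EM(T)$ preserves finitely presentable objects, so that $F(a)$ and $F(b)$ are finitely presentable whenever $a,b$ are. Since $U$ is a left adjoint it preserves all colimits, and in particular every object of $\EM(T)_{\fg}$ has the form $F(c)\epi x$ for some finitely presentable $c\in\A$, so $U(x)$ is a quotient-free description is not needed: what matters is the adjunction isomorphism $\hom_{\EM(T)}(F(c),y)\cong\hom_{\A}(U(F(c)),\,\text{---})$? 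Let me reorganise: the correct bijection is $\hom_{\EM(T)}(x,F(a))\cong\hom_{\A}(U(x),a)$, natural in $x\in\EM(T)$ and $a\in\A$.

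With this in hand the argument is essentially a dictionary translation. The hypothesis in (2) says exactly that $\hom_{\EM(T)}(x,F(a))\cong\hom_{\EM(T)}(x,F(b))$ for all $x\in\EM(T)_{\fg}$; the conclusion in (1) is $F(a)\cong F(b)$; and Theorem~\ref{th:Lovasz-lfp-categories}, applied with the locally finitely presentable category $\EM(T)$ in place of $\A$, delivers precisely this equivalence — \emph{provided} that $F(a)$ and $F(b)$ are of finite $\EM(T)_{\fp}$-type. So the two steps that need genuine verification are: (i) $F$ carries objects of finite $\A_{\fp}$-type to objects of finite $\EM(T)_{\fp}$-type; and (ii) the indexing category in the conclusion of Theorem~\ref{th:Lovasz-lfp-categories} is $\EM(T)_{\fg}$, which matches the indexing set appearing in statement~(2) after applying $U$. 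For (i): if $y\in\EM(T)_{\fp}$ then, since $F$ preserves finitely presentable objects and $\EM(T)_{\fp}$ is generated under the adjunction, one reduces to showing $\hom_{\EM(T)}(y,F(a))$ is finite, which by adjunction equals $\hom_{\A}(U(y),a)$; and $U(y)$ is a finitely generated — indeed, when $y$ is finitely presentable, finitely presentable — object of $\A$ because $U$ is a left adjoint preserving finitely presentable objects (a consequence of $F$ preserving directed colimits, which is the finite-rank hypothesis on $T$). Hence finiteness of $\hom_{\A}(U(y),a)$ follows from $a$ being of finite $\A_{\fp}$-type, possibly invoking Lemma~\ref{l:finite-fp-fg-type} to pass between finitely generated and finitely presentable.

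For the converse direction of the corollary — that (1) implies (2) — no counting is needed: $F(a)\cong F(b)$ gives $\hom_{\EM(T)}(x,F(a))\cong\hom_{\EM(T)}(x,F(b))$ for every $x$, and applying the adjunction isomorphism yields $\hom_{\A}(U(x),a)\cong\hom_{\A}(U(x),b)$ for all $x$, in particular for $x\in\EM(T)_{\fg}$. I would write the proof compactly as: ``By the adjunction $U\dashv F$, for every $x\in\EM(T)$ and every object $z\in\A$ there is a natural bijection $\hom_{\EM(T)}(x,F(z))\cong\hom_{\A}(U(x),z)$. Since $T$ has finite rank, $\EM(T)$ is locally finitely presentable and $F$ preserves finitely presentable objects; combined with the previous bijection and the fact that $U$ preserves finitely presentable objects, $F(a)$ and $F(b)$ are of finite $\EM(T)_{\fp}$-type whenever $a,b$ are of finite $\A_{\fp}$-type. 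The equivalence now follows from Theorem~\ref{th:Lovasz-lfp-categories} applied to $\EM(T)$, using that an object of $\EM(T)_{\fg}$ is of the form $x$ with $U(x)\in\A_{\fg}$.''

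The main obstacle I anticipate is bookkeeping around which objects $U$ and $F$ preserve: one must be careful that $F$ preserving finitely presentable objects (equivalently, $U$ preserving directed colimits) is the correct reading of ``$T$ of finite rank,'' and that $U$ — being a left adjoint whose right adjoint $F$ preserves directed colimits — therefore preserves finitely presentable objects, so that $U(\EM(T)_{\fp})\subseteq\A_{\fp}$ and hence $U(\EM(T)_{\fg})\subseteq\A_{\fg}$. Everything else is a formal consequence of the adjunction isomorphism and Theorem~\ref{th:Lovasz-lfp-categories}; the only subtlety is ensuring the indexing class $\EM(T)_{\fg}$ in that theorem is matched, via $U$, with the class of hom-sets $\hom_{\A}(U(x),-)$ named in statement~(2), which is immediate once one knows $U$ sends $\EM(T)_{\fg}$ into $\A_{\fg}$ but may also require checking this map is essentially surjective onto the $c$'s that matter, or rather noting that it suffices to test on $\{U(x)\mid x\in\EM(T)_{\fg}\}$ because those are exactly the hom-sets controlled by the nerve $N_{F(a)}$ restricted to $\EM(T)_{\fg}$.
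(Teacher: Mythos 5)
Your proposal is correct and follows essentially the same route as the paper: apply Theorem~\ref{th:Lovasz-lfp-categories} to the locally finitely presentable category $\EM(T)$, use the adjunction bijection $\hom_{\EM(T)}(x,F(a))\cong\hom_{\A}(U(x),a)$ to translate both the finite-type hypothesis and the counting condition, and invoke the appendix result that $U$ preserves finitely presentable objects. The only cosmetic difference is your initial (unneeded and not quite right) claim that $F$ preserves finitely presentable objects, which you correctly replace by the relevant statement that $F$ sends objects of finite $\A_{\fp}$-type to objects of finite $\EM(T)_{\fp}$-type.
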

\begin{proof}
Since $T$ is of finite rank, Theorem~\ref{t:EM-lfp} in the appendix entails that the category $\EM(T)$ is locally finitely presentable and the forgetful functor 
\[
U\colon \EM(T)\to \A
\] 
preserves (and reflects) finitely presentable objects. 

Note that the object $F(a)$ is of finite $\EM(T)_{\fp}$-type whenever $a$ is of finite $\A_{\fp}$-type. Just observe that, for all $x\in \EM(T)_{\fp}$, 
\[
\hom_{\EM(T)}(x,F(a))\cong \hom_{\A}(U(x),a)
\]
which is a finite set because $U(x)$ is finitely presentable.
Therefore, the statement follows by an application of Theorem~\ref{th:Lovasz-lfp-categories}.
\end{proof}

\begin{remark}\label{rem:replace-fg-with-fp}
Suppose we are in the situation of the previous corollary. In view of Corollary~\ref{cor:fp-equal-fg} in the appendix, if the finitely presentable objects in $\A$ coincide with the finitely generated ones, then the same holds in $\EM(T)$. 

In this case, Corollary~\ref{cor:hom-counting-comonads-finite-rank} states that, for all objects $a,b\in\A$ of finite $\A_{\fp}$-type, $F(a)\cong F(b)$ if, and only if,
\[
\hom_{\A}(U(x),a)\cong \hom_{\A}(U(x),b)
\]
for all $x\in\EM(T)$ such that $U(x)$ is finitely presentable. 

This occurs, for instance, when $\A=\Set$ or $\A=\R(\sigma)$ is the category of $\sigma$-structures for a finite relational signature $\sigma$. A similar remark applies to Corollary~\ref{cor:hom-counting-comonads-finite-rank-relative} below.
\end{remark}

We include a `relative' version of Corollary~\ref{cor:hom-counting-comonads-finite-rank} which will be needed in Section~\ref{s:finite-variable-logic} for applications to finite-variable logics. To this end, given a functor $G\colon \C\to \D$ and a full subcategory $\tilde{\C}$ of~$\C$, we write $G[\tilde{\C}]$ for the full subcategory of $\D$ defined by the objects of the form $G(c)$ with $c\in\tilde{\C}$.
\begin{corollary}\label{cor:hom-counting-comonads-finite-rank-relative}
Let $\A'$ be a locally finitely presentable category and assume that there is a full and faithful functor $J\colon \A\hookrightarrow \A'$ with a left adjoint $H$. Let $T,T'$ be comonads on $\A$ and $\A'$, respectively, with associated adjunctions ${U\dashv F}$ and ${U'\dashv F'}$. 
Suppose $T'$ is of finite rank and the adjunction $H\dashv J$ restricts to functors $U[\EM_\fg(T)]\leftrightarrows U'[\EM_\fg(T')]$:
\[\begin{tikzcd}[column sep=1em]
\A \arrow[yshift=-6pt]{rr}[swap]{J} & {\text{\scriptsize{$\bot$}}} & \A' \arrow[yshift=6pt]{ll}[swap]{H} \\ 
U[\EM_\fg(T)] \arrow[hookrightarrow]{u} \arrow[dashed,yshift=-4pt]{rr} & {} & U'[\EM_\fg(T')] \arrow[hookrightarrow]{u} \arrow[dashed,yshift=4pt]{ll}
\end{tikzcd}\]
 The following are equivalent for all objects $a,b\in\A$ of finite $\A_{\fp}$-type: 
\begin{enumerate}
\item $F'J(a)\cong F'J(b)$.
\item For all $x\in \EM(T)_{\fg}$, 
\[
\hom_{\A}(U(x),a)\cong \hom_{\A}(U(x),b).
\]
\end{enumerate}
\end{corollary}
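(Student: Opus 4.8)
The plan is to transport Corollary~\ref{cor:hom-counting-comonads-finite-rank} across the adjunction $H\dashv J$: rather than applying it to $\A$ and $T$, I would apply it to the locally finitely presentable category $\A'$, the finite-rank comonad $T'$, and the objects $J(a),J(b)\in\A'$. The first step is to verify that $J(a)$ and $J(b)$ are of finite $\A'_{\fp}$-type. For any finitely presentable $c'\in\A'$ the adjunction $H\dashv J$ yields a bijection
\[
\hom_{\A'}(c',J(a))\;\cong\;\hom_{\A}(H(c'),a),
\]
and, since the left adjoint $H$ carries finitely presentable objects of $\A'$ to finitely presentable objects of $\A$, the set on the right is finite because $a$ is of finite $\A_{\fp}$-type; the same applies to $b$. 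Granting this, Corollary~\ref{cor:hom-counting-comonads-finite-rank} applied in $\A'$ gives
\[
F'J(a)\cong F'J(b)\quad\Longleftrightarrow\quad \hom_{\A'}(U'(x'),J(a))\cong\hom_{\A'}(U'(x'),J(b))\ \text{ for all }x'\in\EM(T')_{\fg}.
\]

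\textbf{Matching the counting conditions.} It then remains to identify the right-hand condition above with condition~(2). Suppose~(2) holds and fix $x'\in\EM(T')_{\fg}$. By the assumed restriction of $H$ to a functor $U'[\EM_{\fg}(T')]\to U[\EM_{\fg}(T)]$, there is $x\in\EM(T)_{\fg}$ with $HU'(x')\cong U(x)$; then the adjunction $H\dashv J$ gives $\hom_{\A'}(U'(x'),J(a))\cong\hom_{\A}(HU'(x'),a)\cong\hom_{\A}(U(x),a)$, and likewise with $b$ in place of $a$, so~(2) transfers to the $\A'$-side. Conversely, assume the $\A'$-side condition and fix $x\in\EM(T)_{\fg}$. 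By the restriction of $J$ to a functor $U[\EM_{\fg}(T)]\to U'[\EM_{\fg}(T')]$, there is $x'\in\EM(T')_{\fg}$ with $JU(x)\cong U'(x')$; since $J$ is full and faithful, $\hom_{\A}(U(x),a)\cong\hom_{\A'}(JU(x),J(a))\cong\hom_{\A'}(U'(x'),J(a))$, and similarly for $b$, so the $\A'$-side condition yields~(2). Composing the two equivalences proves the corollary.

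\textbf{Main obstacle.} The delicate step is the first one: one must make sure that the finiteness hypotheses genuinely transfer along $H\dashv J$, i.e.\ that $J(a)$ and $J(b)$ (equivalently, $F'J(a)$ and $F'J(b)$) belong to the class of objects to which the finite-rank counting result of Corollary~\ref{cor:hom-counting-comonads-finite-rank} applies. This hinges on $H$ preserving finite presentability, which is what controls the hom-sets $\hom_{\A}(H(c'),a)$, together with the hypothesis that $H\dashv J$ restricts to the subcategories $U[\EM_{\fg}(T)]$ and $U'[\EM_{\fg}(T')]$ — it is precisely this restriction that makes the index sets of the two counting conditions correspond. Once these compatibilities are secured, the remainder is a routine rewriting of Corollary~\ref{cor:hom-counting-comonads-finite-rank} under the adjunction, using full faithfulness of $J$ together with the natural bijections $\hom_{\A'}(U'(x'),J(-))\cong\hom_{\A}(HU'(x'),-)$ and $\hom_{\A}(U(x),-)\cong\hom_{\A'}(JU(x),J(-))$.
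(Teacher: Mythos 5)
Your proposal takes the same route as the paper's proof: apply Corollary~\ref{cor:hom-counting-comonads-finite-rank} to the objects $J(a),J(b)$ in the locally finitely presentable category $\A'$ with the finite-rank comonad $T'$, and then translate the resulting counting condition over $\EM(T')_{\fg}$ into condition~(2). Your ``matching the counting conditions'' step is essentially verbatim the paper's argument: one direction uses the restriction of $H$ together with the adjunction bijection $\hom_{\A'}(U'(x'),J(-))\cong\hom_{\A}(HU'(x'),-)$, the other uses the restriction of $J$ together with full faithfulness of $J$.

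The one place where you go beyond the paper is the preliminary verification that $J(a)$ and $J(b)$ are of finite $\A'_{\fp}$-type, and there your justification does not hold up: you assert that ``the left adjoint $H$ carries finitely presentable objects of $\A'$ to finitely presentable objects of $\A$'' as though this were automatic. It is not a general property of left adjoints; a left adjoint preserves finitely presentable objects when its right adjoint preserves directed colimits, and finitarity of $J$ is not among the stated hypotheses. To be fair, you have put your finger on a point that the paper itself passes over in silence (it invokes Corollary~\ref{cor:hom-counting-comonads-finite-rank} for $J(a),J(b)$ without checking the finite-type hypothesis), so raising the issue is to your credit --- but the fix you offer is not a proof. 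A more robust way out is to observe that the proof of Corollary~\ref{cor:hom-counting-comonads-finite-rank} uses the finite-type hypothesis only through the hom-sets $\hom_{\A'}(U'(x'),J(a))$ for $x'$ a finitely presentable (hence finitely generated) coalgebra; by the adjunction these are $\hom_{\A}(HU'(x'),a)$ with $HU'(x')\in U[\EM_\fg(T)]$, which is where the restriction hypothesis, rather than a preservation property of $H$ on all of $\A'_{\fp}$, does the work. Even then one must know that such objects are finitely generated in $\A$, which is immediate in the intended applications (finite structures) but deserves a word in the general statement. Apart from this point, your argument is correct and coincides with the paper's.
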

\begin{proof}
In view of Corollary~\ref{cor:hom-counting-comonads-finite-rank}, item~1 in the statement is equivalent to saying that, for all $x'\in\EM(T')_{\fg}$, 
\begin{equation}\label{eq:bij-T'-coalg}
\hom_{\A'}(U'(x),J(a))\cong \hom_{\A'}(U'(x),J(b)).
\end{equation}
In turn, it is an easy observation that the latter condition is equivalent to item 2 in the statement. This is essentially the content of \cite[Lemma~27]{DJR2021}; for the sake of completeness, we provide a proof. Suppose that equation~\eqref{eq:bij-T'-coalg} holds for all $x'\in\EM(T')_{\fg}$, and fix an arbitrary $x\in \EM(T)_{\fg}$. We have
\begin{align*}
\hom_{\A}(U(x),a) &\cong \hom_{\A'}(JU(x),J(a)) \tag{$J$ full and faithful}  \\
& \cong \hom_{\A'}(JU(x),J(b)) \tag{$JU(x)\in U'[\EM_\fg(T')]$} \\
&\cong \hom_{\A}(U(x),b). \tag{$J$ full and faithful}
\end{align*}
Conversely, suppose item 2 in the statement holds. For all $x'\in\EM(T')_{\fg}$,
\begin{align*}
\hom_{\A'}(U'(x),J(a)) &\cong \hom_{\A}(HU'(x),a) \tag{$H\dashv J$}  \\
& \cong \hom_{\A}(HU'(x),b) \tag{$HU'(x)\in U[\EM_\fg(T)]$} \\
&\cong \hom_{\A'}(U'(x),J(b)) \tag{$H\dashv J$}
\end{align*}
and so equation~\eqref{eq:bij-T'-coalg} holds. This concludes the proof.
\end{proof}

\section{Examples}\label{s:examples}

\subsection{Trees} 
If $(P, {\leq})$ is a poset, then $C \subseteq P$ is a \emph{chain}  if it is linearly ordered. A \emph{forest} is a poset $(P,\leq)$ such that, for all $u\in P$, the set 
\[
\down u\coloneqq \{v\in P\mid v\leq u\}
\] 
is a finite chain. 
The \emph{covering relation} $\cvr$ associated with a partial order $\leq$ is defined by $u\cvr v$ if and only if $u<v$ and there is no $w$ such that $u<w< v$.
The \emph{roots} of a forest are the minimal elements. A \emph{tree} is a forest with at most one root (note that a tree is either empty, or has a unique root, the least element in the order).
Morphisms of trees are maps which preserve the root and the covering relation. 
The category of trees is denoted by $\T$. Monomorphisms and strong epimorphisms in $\T$ coincide, respectively, with the injective and surjective tree morphisms.

It is well known that $\T$ is a locally finitely presentable category in which the finitely presentable objects, which coincide with the finitely generated ones, are precisely the finite trees. Moreover, it is not difficult to see that a tree $(P,\leq)$ has finite $\T_{\fp}$-type if, and only if, it is \emph{finitely branching}. That is, for every $u\in P$, the set $\{v\in P\mid u\cvr v\}$ is finite.

Thus, Theorem~\ref{th:Lovasz-lfp-categories} entails at once the following result:
\begin{theorem}
Let $P,Q$ be any two finitely branching trees. Then $P\cong Q$ if and only if, for all finite trees $R$, the number of tree morphisms $R\to P$ is the same as the number of tree morphisms $R\to Q$.
\end{theorem}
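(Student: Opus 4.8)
The plan is to obtain the theorem as a direct instance of Theorem~\ref{th:Lovasz-lfp-categories}, applied to the locally finitely presentable category $\T$ of trees, taking $a\coloneqq P$ and $b\coloneqq Q$. Everything reduces to pinning down the relevant subcategories of $\T$ and the class of objects of finite type; there is no deep obstacle, and the real content lies in assembling a few folklore facts about $\T$.

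First I would record the structural facts about $\T$: it is locally finitely presentable, and its finitely presentable objects coincide with its finitely generated ones, both classes being exactly the finite trees. Cocompleteness of $\T$ and the fact that every tree is the directed colimit of its finite subtrees are routine; that each finite tree is finitely presentable follows because a tree morphism out of a finite tree is determined by finitely many pieces of data (the images of its nodes, subject only to preservation of root and covering relation), so the associated covariant hom-functor commutes with directed colimits. Conversely, by \cite[Propositions~1.61 and~1.69(ii)]{AR1994} a finitely generated object of $\T$ is a strong epimorphic quotient of a finitely presentable one, hence (strong epimorphisms in $\T$ being the surjective tree morphisms) has finite underlying set, i.e.\ is a finite tree. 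With $\T_{\fp}=\T_{\fg}=\{\text{finite trees}\}$ in hand, the right-hand side of Theorem~\ref{th:Lovasz-lfp-categories} becomes precisely the hypothesis of the theorem to be proved: $\hom_{\T}(R,P)\cong\hom_{\T}(R,Q)$ for all finite trees $R$, which for finite hom-sets is the same as equality of homomorphism counts.

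Next I would verify that the finitely branching trees are exactly the trees of finite $\T_{\fp}$-type. The key remark is that tree morphisms are depth-preserving: since $f$ preserves the root and the covering relation, it sends the finite chain $\down u$ onto a chain of covers of the same length, so $f(u)$ has the same depth as $u$. Hence a morphism from a finite tree $R$ into $P$ amounts to choosing, node by node along the covering relation of $R$, a cover of the image of the parent node; if $P$ is finitely branching there are only finitely many such choices in total, so $\hom_{\T}(R,P)$ is finite. For the converse, if some node $u$ of $P$ at depth $k$ has infinitely many covers, let $R$ be the chain $0\cvr 1\cvr\cdots\cvr k$ with one extra child attached above $k$; mapping the chain isomorphically onto $\down u$ and sending the extra child to an arbitrary cover of $u$ already yields infinitely many morphisms $R\to P$. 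Thus $P$ is of finite $\T_{\fp}$-type iff every node has finitely many covers, i.e.\ iff $P$ is finitely branching, and likewise for $Q$.

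Finally, applying Theorem~\ref{th:Lovasz-lfp-categories} with $\A\coloneqq\T$, $\C\coloneqq\T_{\fg}$, $a\coloneqq P$, $b\coloneqq Q$ gives the statement. The only point requiring any care is keeping track of the fact that tree morphisms are depth-preserving, as this is what makes both the implication ``finitely branching $\Rightarrow$ finite $\T_{\fp}$-type'' and the identification of $\T_{\fp}$ with the finite trees go through cleanly; once these are in place the theorem is immediate.
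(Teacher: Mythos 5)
Your proposal is correct and follows exactly the paper's route: the paper likewise obtains the theorem as an immediate instance of Theorem~\ref{th:Lovasz-lfp-categories}, citing as folklore that $\T$ is locally finitely presentable with $\T_{\fp}=\T_{\fg}=\{\text{finite trees}\}$ and that an object has finite $\T_{\fp}$-type precisely when it is finitely branching. The one blemish is a small circularity in your justification of the folklore part: you deduce that finitely generated objects are finite trees from the finiteness of finitely presentable ones, which you have not established (you only prove the inclusion finite $\Rightarrow$ finitely presentable); the standard fix is to observe that a finitely generated tree, being the directed union of its finite subtrees along embeddings, must already coincide with one of them.
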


\begin{remark}
In the last part of the proof of Theorem~\ref{th:Lovasz-finite-type}, we used the fact that 
\[
\forall i\in I. \ \M(b_i, a)\neq \emptyset \enspace \Longrightarrow \enspace \M(\colim_{i\in I} b_i, a)\neq\emptyset
\]
where, using the notation of the aforementioned theorem, the objects $b_i$ sit in the category $\C$ and $a$ is a $\C$-separable object of finite $\C$-type. 

This can be regarded as a generalisation of K\"{o}nig's Lemma for trees, stating that every finitely branching infinite tree contains an infinite simple path. Just observe that a countably infinite simple path $P_\omega$ is a colimit in $\T$ of finite simple paths $P_n$ of (increasing) length $n$. If $Q$ is an infinite tree, then for all $n$ there exists an embedding $P_n \emb Q$. If, in addition, $Q$ is finitely branching, then it has finite $\T_{\fp}$-type (and is $\T_{\fp}$-separable) and so there is an embedding $P_\omega\emb Q$.\footnote{In the specific case of trees, embeddings could be replaced by arbitrary arrows. Just note that every morphism in $\T$ whose domain is linearly ordered is automatically injective.}
In this sense, a version of K\"{o}nig's Lemma holds, in particular, in every locally finitely presentable category.
\end{remark}

\subsection{Profinite algebras}
In this section we focus on profinite universal algebras; a nice expository paper on the subject is~\cite{Banaschewski1972}.

Let $\V$ be an arbitrary variety of universal algebras, regarded as a category with morphisms the homomorphisms. The full subcategory $\V_{\fin}$ of $\V$ defined by the finite algebras is essentially small and has finite limits, which are computed in the category of sets. The same holds for any full subcategory $\C$ of $\V_{\fin}$ that is closed under subalgebras and finite products.

Let us fix a full subcategory $\C$ of $\V_{\fin}$ closed under subalgebras and finite products. The category $\pro(\C)$ can be identified with a full subcategory of the category $\K_{\V}$, whose objects are the topological $\V$-algebras carrying a compact Hausdorff topology and whose morphisms are the continuous homomorphisms.\footnote{Under this identification, an algebra in $\C$ is regarded as a topological algebra with respect to the discrete topology.} Explicitly, a topological algebra $A\in \K_{\V}$ belongs to $\pro(\C)$ if and only if, whenever $f, g\colon B\to A$ are distinct morphisms in $\K_{\V}$, there exists a morphism $h\colon A\to C$ with $C\in \C$ such that $h\circ f\neq h\circ g$. We shall refer to the objects of $\pro(\C)$ as \emph{pro-$\C$} algebras. If $\C=\V_{\fin}$, these coincide with the usual profinite $\V$-algebras.

The monomorphisms in $\K_{\V}$ are precisely the injective maps, i.e.\ the maps that provide (topological and algebraic) isomorphisms with the image. This holds in $\pro(\C)$ as well, because the latter is a reflective subcategory of $\K_{\V}$. In particular, the subobjects in $\pro(\C)$ can be identified with the closed subalgebras. For the previous assertions, cf.~\cite{Banaschewski1972} and the references therein. It follows that $\C$ is closed under (strong) subobjects in $\pro(\C)$. 

Finally, recall that a universal algebra $A$ is said to be \emph{finitely generated} if there exists a finite subset $S\subseteq A$ such that the inclusion-smallest subalgebra $\langle S\rangle$ of $A$ containing $S$ is $A$ itself. In the setting of topological algebras it is customary to relax the previous condition and say that a topological algebra $A$ is \emph{topologically finitely generated} if there exists a finite subset $S\subseteq A$ such that $\langle S\rangle$ is dense in the topology of $A$. 
Now, if $A\in \pro(\C)$ is topologically finitely generated and $C\in \C$, the set $\hom_{\pro(\C)}(A, C)$ is finite. Just observe that, if $S\subseteq A$ is a finite subset such that $\langle S\rangle$ is dense in $A$, the obvious restriction function
\[
\hom_{\pro(\C)}(A, C)\to C^S
\]
is injective because a continuous map into a Hausdorff space is completely determined by its behaviour on any dense subset of its domain.

Hence, the following is an immediate consequence of Corollary~\ref{cor:counting-proC}:
\begin{theorem}\label{t:counting-pro-C}
Let $\V$ be a variety of universal algebras, $\C\subseteq \V_{\fin}$ a class of finite algebras closed under subalgebras and finite products, and $A,B$ two topologically finitely generated pro-$\C$ algebras. Then $A\cong B$ if and only if, for all (discrete) algebras $C\in \C$, the number of continuous homomorphisms $A\to C$ is the same as the number of continuous homomorphisms $B\to C$. 
\end{theorem}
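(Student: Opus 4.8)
The plan is to deduce the theorem directly from Corollary~\ref{cor:counting-proC}, so the only real work is to check that the class $\C$ of the statement satisfies the hypotheses of that corollary and then to translate its conclusion back into topological language.

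First I would verify the hypotheses on $\C$. As a full subcategory of $\V_{\fin}$, the class $\C$ is essentially small. Being closed under finite products and subalgebras, $\C$ has all finite limits, computed as in $\Set$: the terminal object is the empty product, binary products are given, and the equaliser of a pair $f,g\colon C_1\to C_2$ is realised as the subalgebra $\{x\in C_1\mid f(x)=g(x)\}\subseteq C_1$. That $\C$ is closed under strong subobjects in $\pro(\C)$ was established in the discussion preceding the statement, via the identification of $\pro(\C)$ with a full (reflective) subcategory of $\K_{\V}$ together with the description of its subobjects as the closed subalgebras. Finally, for the finiteness hypothesis, fix $C\in\C$ and a finite $S\subseteq A$ with $\langle S\rangle$ dense in $A$; since a continuous homomorphism from $A$ into the finite discrete (hence Hausdorff) algebra $C$ is determined by its restriction to the dense subset $\langle S\rangle$, and so by its values on $S$, the set $\hom_{\pro(\C)}(A,C)$ embeds into $C^S$ and is therefore finite, and similarly for $B$. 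Hence Corollary~\ref{cor:counting-proC} applies with $a\coloneqq A$ and $b\coloneqq B$.

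It then remains to unwind the corollary. Under the embedding $\pro(\C)\hookrightarrow\K_{\V}$, an isomorphism in $\pro(\C)$ is exactly an isomorphism of topological $\V$-algebras, and $\hom_{\pro(\C)}(A,C)$ is exactly the set of continuous homomorphisms $A\to C$ into the discrete algebra $C$. As these hom-sets are finite, a bijection between $\hom_{\pro(\C)}(A,C)$ and $\hom_{\pro(\C)}(B,C)$ exists if and only if the two sets have equal cardinality; so the conclusion of Corollary~\ref{cor:counting-proC}, read for all $C\in\C$, is precisely the statement of the theorem. I do not foresee a genuine obstacle here: all the non-formal input — that the abstract pro-completion $\pro(\C)$ is realised concretely inside $\K_{\V}$ with the expected hom-sets, monomorphisms and closed subobjects, and that topologically finitely generated algebras have finite hom-sets into finite discrete algebras — is already in place in the preceding discussion (and ultimately rests on standard facts about profinite algebras, see~\cite{Banaschewski1972}). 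The only points requiring a little attention are the bookkeeping identifications just described and the easy observation that closure under finite products and subalgebras really does supply all finite limits.
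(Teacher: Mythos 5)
Your proposal is correct and follows exactly the paper's route: the paper likewise derives the theorem as an immediate consequence of Corollary~\ref{cor:counting-proC}, with the hypotheses (finite limits in $\C$, closure under strong subobjects in $\pro(\C)$, and finiteness of $\hom_{\pro(\C)}(A,C)$ via density of $\langle S\rangle$ and the Hausdorff property of the discrete codomain) checked in the surrounding discussion just as you do. Your filling-in of the finite-limit verification (terminal object, binary products, equalisers as subalgebras) is a welcome explicit detail but does not change the argument.
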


The previous result applies, e.g., when $\C$ is the class of finite lattices, or finite Heyting algebras, or finite semigroups, and provides a characterisation of the isomorphism relation for (topologically finitely generated) profinite lattices, profinite Heyting algebras, and profinite semigroups, respectively. 

\vspace{0.5em}
Next, we restrict our attention to topologically finitely generated profinite groups, whose topological structure is determined by the algebraic one.

Instantiating Theorem~\ref{t:counting-pro-C} with $\C$ the class of finite groups, we obtain the following result: \emph{Two topologically finitely generated profinite groups $G,H$ are isomorphic (as topological groups) if and only if, for all finite discrete groups $K$, the number of continuous group homomorphisms $G\to K$ is the same as the number of continuous group homomorphisms $H\to K$.}\footnote{This fact was first conjectured by Mima Stanojkovski (private e-mail communication).}

In turn, a remarkable result of Nikolov and Segal~\cite{NS2007a,NS2007b} states that the topological structure of a topologically finitely generated profinite group is completely determined by its algebraic structure. More precisely, the subgroups of finite index of a topologically finitely generated profinite group coincide with the open subgroups. (In the special case of pro-$p$-groups, this was proved by Serre in the 1970s, cf.\ \cite[Exercise~6 p.~32]{Serre2002}.) This implies that (i) any group homomorphism from a topologically finitely generated profinite group to a finite discrete group is continuous, and (ii) any two topologically finitely generated profinite groups are isomorphic as topological groups if, and only if, they are isomorphic as abstract groups. The homomorphism counting result in the previous paragraph can then be  restated as follows: \emph{Two topologically finitely generated profinite groups $G,H$ are isomorphic if and only if, for all finite groups $K$, the number of group homomorphisms $G\to K$ is the same as the number of group homomorphisms $H\to K$.}

A similar result, whose statement we shall omit, holds for (topologically finitely generated) Abelian profinite groups, also known as \emph{proabelian groups}, by taking as $\C$ the class of finite Abelian groups. 

For an example where $\C$ is a proper subclass of $\V_{\fin}$, let $\C$ consist of the finite $p$-groups. We deduce that: \emph{Two topologically finitely generated pro-$p$-groups $G,H$ are isomorphic if and only if, for all finite $p$-groups $K$, the number of group homomorphisms $G\to K$ is the same as the number of group homomorphisms $H\to K$.} 

\vspace{0.5em}
A typical homomorphism counting result will count the number of morphisms (into, or from a given object) in the monoid $\N$ of natural numbers. Now, recall that the proof of Theorem~\ref{th:Lovasz-finite-type} (and similarly, the proof of Theorem~\ref{thm:right-combinatorial-cats} in the locally finite case) consists essentially of two steps. First, using the Stirling kernel construction, we show that the functors $\M(-,a)$ and $\M(-,b)$ are pointwise isomorphic whenever the functors $\hom(-,a)$ and $\hom(-,b)$ are pointwise isomorphic. Second, Numakura's Lemma is invoked to show that $a\cong b$ if, for all $c$,
\[
\M(c,a)\neq\emptyset \enspace \Longleftrightarrow \enspace \M(c,b)\neq\emptyset.
\] 
Dispensing with the first step, we obtain a (weaker) result whereby the isomorphism type of an object is determined by counting the number of embeddings (or, dually, quotients) in the two-element monoid $(\{0,1\},\vee,0)$. We state a special case of this result, which generalises a known fact in profinite group theory (cf.\ \cite[Theorems~3.2.7 and~3.2.9]{RZ2010}).

\begin{proposition}
Let $\V$ be a variety of universal algebras, $\C\subseteq \V_{\fin}$ a class of finite algebras closed under subalgebras and finite products, and $A,B$ two topologically finitely generated pro-$\C$ algebras. Suppose that, for all (discrete) algebras $C\in \C$, there exists a quotient\footnote{That is, a continuous surjective homomorphism.} $A\epi C$ if and only if there exists a quotient $B\epi C$. Then $A\cong B$.
\end{proposition}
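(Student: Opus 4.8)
The plan is to re-run the proof of Theorem~\ref{th:Lovasz-finite-type}, transported to the pro-completion setting of Corollary~\ref{cor:counting-proC}, with its first half---the Stirling kernel step (Proposition~\ref{pr:pointwise-iso-kernel-pushouts})---simply deleted. The hypothesis here is strictly weaker than in Theorem~\ref{t:counting-pro-C}: we are told not that certain hom-sets are equipotent, but only that certain hom-sets are simultaneously empty or non-empty, and it is precisely this information that feeds the second, topological, half of the proof of Theorem~\ref{th:Lovasz-finite-type}. So, exactly as in the proof of Corollary~\ref{cor:counting-proC}, I would work in $\D\coloneqq\pro(\C)^\op\simeq\ind(\C^\op)$---a locally finitely presentable category, since $\C$ has finite limits---with dense subcategory $\C^\op$, and apply the machinery of Section~\ref{s:nerves-hom-spaces} and Theorem~\ref{th:Lovasz-lfp-categories}. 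Under this identification the proper factorisation system on $\D$ has as embeddings the duals of the quotients (continuous surjective homomorphisms) of $\pro(\C)$, so that for $C\in\C$ the set $\M_{\D}(C,A)$ is the set of quotients $A\epi C$ in $\pro(\C)$. Since $A$ and $B$ are topologically finitely generated, the discussion in the Profinite algebras section shows that, regarded as objects of $\D$, they are of finite $\C^\op$-type; and every object of $\ind(\C^\op)$ is $\C^\op$-separable.

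The first step is to produce a quotient in each direction. Presenting $B$, as an object of $\D$, as a directed colimit $\colim_{i\in I}B_i$ of objects $B_i\in\C$ along embeddings, the dual description is $B=\lim_{i\in I}B_i$ as a cofiltered limit in $\pro(\C)$ whose limiting cone consists of quotients $B\epi B_i$. In particular each $\M_{\D}(B_i,B)$ is non-empty, so by the hypothesis each $\M_{\D}(B_i,A)$---the set of quotients $A\epi B_i$---is non-empty as well; and each is finite because $A$ is of finite $\C^\op$-type. Restricting the homeomorphism~\eqref{eq:homeo-E_a(b)-inv-lim} to embeddings, exactly as in the proof of Theorem~\ref{th:Lovasz-finite-type}, identifies $\M_{\D}(B,A)$ with the inverse limit $\lim_{i\in I}\M_{\D}(B_i,A)$ of non-empty finite discrete spaces, which is non-empty by compactness. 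Hence there is a quotient $\alpha\colon A\epi B$ in $\pro(\C)$, and, by the symmetric argument, a quotient $\beta\colon B\epi A$.

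To finish, I would invoke the endomorphism-monoid argument from the end of the proof of Theorem~\ref{th:Lovasz-finite-type}: by Lemma~\ref{lem:finite-type-topology} the monoid $\M_{\D}(A,A)$ of quotient endomorphisms of $A$, under composition, is a compact Hausdorff topological monoid, and since its elements are monomorphisms of $\D$ it satisfies the left-cancellation law; hence it is a group by Numakura's Lemma (Lemma~\ref{lem:numakura-lemma}). Consequently the quotient endomorphism $\beta\circ\alpha$ of $A$ is invertible, so $\alpha$ is an isomorphism by Lemma~\ref{l:factorisation-properties}, i.e.\ $A\cong B$. I do not expect a genuine obstacle: the only real content is verifying that the pro-$\C$ data instantiates the framework of Section~\ref{s:nerves-hom-spaces} after dualisation (density, the induced factorisation system, finite $\C^\op$-type of topologically finitely generated pro-$\C$ algebras, and $\C^\op$-separability), and this has already been done in the proofs of Corollary~\ref{cor:counting-proC} and Theorem~\ref{t:counting-pro-C}. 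What makes the present statement easier than Theorem~\ref{t:counting-pro-C} is exactly that the M\"{o}bius / inclusion--exclusion step of Proposition~\ref{pr:pointwise-iso-kernel-pushouts} is no longer required: it is replaced by the trivial remark that the limiting cone maps $B\epi B_i$ exist, which is what forces each $\M_{\D}(B_i,A)$ to be non-empty.
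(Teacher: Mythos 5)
Your proposal is correct and follows essentially the same route as the paper, which derives this Proposition by dropping the Stirling-kernel/M\"obius step from the proof of Theorem~\ref{th:Lovasz-finite-type} and running only its topological second half (the inverse-limit compactness argument to produce quotients in both directions, followed by Numakura's Lemma) in the dualised setting $\pro(\C)^\op\simeq\ind(\C^\op)$ already set up for Corollary~\ref{cor:counting-proC} and Theorem~\ref{t:counting-pro-C}. The instantiation details you check (finite $\C^\op$-type from topological finite generation, $\C^\op$-separability, and the identification of embeddings in $\D$ with continuous surjections in $\pro(\C)$) are exactly the ones the paper relies on.
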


\subsection{Finite-variable logics}\label{s:finite-variable-logic}
In this section we assume familiarity with the basic notions of first-order logic and finite model theory. We shall present an application of the results in Section~\ref{s:beyond-loc-finite} to homomorphism counting in finite model theory; this is the topic of the recent work~\cite{DJR2021}. Following~\cite{DJR2021}, we apply the framework of \emph{game comonads} in (finite) model theory introduced by Abramsky, Dawar \emph{et al.} in~\cite{Abramsky2017b,AbramskyShah2018}. 

Let us fix a finite relational signature $\sigma$ and a positive integer $k$. We recall from~\cite{Abramsky2017b} the \emph{pebbling comonad} $\Pk$ on the category $\R(\sigma)$ of $\sigma$-structures, which models $k$-pebble games. 

Set $\k\coloneqq \{1,\dots,k\}$. Given a $\sigma$-structure $A$, we consider the set ${(\k\times A)^+}$ of \emph{plays} in $A$, i.e.\ all non-empty finite sequences of elements of $\k\times A$. A pair $(p,a) \in {\k\times A}$ is called a \emph{move}. Intuitively, the move $(p,a)$ corresponds to placing the pebble $p$ on the element $a$. Whenever $[(p_1,a_1),\dots,(p_l,a_l)]$ is a play, $p_i$ is called the \emph{pebble index} of the move $(p_i,a_i)$. Define the map
\[
\epsilon_A\colon (\k\times A)^+\to A, \ \ [(p_1,a_1),\dots,(p_l,a_l)] \mapsto a_l
\]
sending a play to the element of $A$ in its last move. Let $\Pk(A)$ be the $\sigma$-structure with universe $(\k\times A)^+$ and such that, for every $R\in\sigma$ of arity~$j$, its interpretation $R^{\Pk(A)}$ consists of the tuples of plays $(s_1,\ldots,s_j)$ such that: 
\begin{enumerate}[label=(\roman*)]
\item The $s_i$'s are pairwise comparable in the prefix order.
\item Whenever $s_i$ is a prefix of~$s_{i'}$, the pebble index of the last move in $s_i$ does not appear in the suffix of $s_i$ in $s_{i'}$.
\item ${(\epsilon_A(s_1),\dots,\epsilon_A(s_j))\in R^A}$.
\end{enumerate}
This assignment extends to a functor $\R(\sigma)\to\R(\sigma)$ by setting, for all homomorphisms of $\sigma$-structures $h\colon A\to B$,
\[
\Pk(h)\colon \Pk(A)\to \Pk(B), \ \ [(p_1,a_1),\dots,(p_l,a_l)] \mapsto [(p_1,h(a_1)),\dots,(p_l,h(a_l))].
\]

In fact, $\Pk$ is a comonad on $\R(\sigma)$ when equipped with the counit $\epsilon$ described above and the comultiplication $\delta_A \colon \Pk(A)\to \Pk\Pk(A)$ given by
\[
[(p_1,a_1),\dots,(p_l,a_l)]\mapsto [(p_1,s_1),\ldots,(p_l,s_l)],
\]
where $s_i\coloneqq [(p_1,a_1),\ldots,(p_i,a_i)]$ for $i\in\{1,\ldots,l\}$. 
Moreover, it follows directly from \cite[Proposition~22]{Abramsky2017b} that a $\sigma$-structure $A$ admits a coalgebra structure for $\Pk$ if, and only if, it has \emph{tree-width} less than $k$ (the notion of tree-width for relational structures was introduced in~\cite{FV1999} and generalises the homonymous concept for graphs). 

Denote by $\CL$ (respectively, $\CL(\woeq)$) the extension of first-order logic (respectively, first-order logic \emph{without equality}) obtained by adding \emph{counting quantifiers} $\exists_{{\geq}i}$ for all natural numbers $i$. The $k$-variable fragments of $\CL$ and $\CL(\woeq)$ are denoted by $\CL^k$ and $\CL^k(\woeq)$, respectively. 
Further, consider the adjunction $U\dashv F\colon \R(\sigma)\to\EM(\Pk)$ associated with the comonad $\Pk$. For any two finite $\sigma$-structures $A$ and $B$ we have
\[
F(A)\cong F(B) \enspace \Longleftrightarrow \enspace A\equiv_{\CL^k(\text{\scriptsize{$\overset{w.o.}=$}})} B,
\] 
i.e.\ $F(A)\cong F(B)$ precisely when $A$ and $B$ satisfy the same sentences of $\CL^k(\woeq)$. For the latter assertion, cf.\ \cite[Theorem~18]{Abramsky2017b} and \cite[\S VI]{DJR2021}.

It is well known that $\R(\sigma)$ is a locally finitely presentable category, and every finitely presentable $\sigma$-structure is finite. Since we assumed that the signature $\sigma$ is finite, the converse holds as well, and so the finitely generated objects in $\R(\sigma)$ coincide with the finitely presentable ones and are precisely the finite $\sigma$-structures. See e.g.\ \cite[pp.~200--201]{AR1994}.
It follows easily from the criterion in Lemma~\ref{l:sufficient-cond-finite-rank} in the appendix that $\Pk$ is finitary, and it can be verified that it satisfies the conditions in the definition of comonad of finite rank (cf.\ also Remark~\ref{rem:item-iii-monic}). As the forgetful functor $U\colon\EM(\Pk)\to \R(\sigma)$ preserves and reflects finitely presentable objects by Lemma~\ref{l:preserves-reflects-fp-objects}, a coalgebra $x\in\EM(\Pk)$ is finitely presentable if and only if $U(x)$ is a finite $\sigma$-structure.

Therefore, in view of Corollary~\ref{cor:hom-counting-comonads-finite-rank} and Remark~\ref{rem:replace-fg-with-fp}, the following statements are equivalent for all finite $\sigma$-structures $A,B$:
\begin{enumerate}
\item $A\equiv_{\CL^k(\text{\scriptsize{$\overset{w.o.}=$}})} B$.
\item For all finite $\sigma$-structures $C$ with tree-width less than $k$, the number of homomorphisms $C\to A$ is the same as the number of homomorphisms $C\to B$.
\end{enumerate}

To characterise equivalence in the logic $\CL^k$ \emph{with equality}, we proceed as follows. Let $\sigma'$ be the relational signature obtained by adding a binary relation symbol $I$ to $\sigma$. There is an adjunction ${H\dashv J\colon \R(\sigma)\to \R(\sigma')}$, where:
\begin{itemize}
\item $J$ sends a $\sigma$-structure $A$ to the $\sigma'$-structure obtained by interpreting $I$ as the identity relation on $A$;
\item $H$ sends a $\sigma'$-structure $B$ to the quotient structure $B^-/{\sim}$, where $B^-$ is the $\sigma$-reduct of $B$ and $\sim$ is the equivalence relation generated by the interpretation of $I$ in $B$.
\end{itemize}
For a proof of this fact, see \cite[Lemma~25]{DJR2021}. 

Since the comonad $\Pk$ was defined for an arbitrary relational signature $\sigma$, we have a corresponding comonad $\Pk'$ on $\R(\sigma')$ with associated adjunction $U'\dashv F'$. It follows from \cite[Theorem~18]{Abramsky2017b} that, for all finite $\sigma$-structures~$A,B$, 
\[
F'J(A)\cong F'J(B) \enspace \Longleftrightarrow \enspace A\equiv_{\CL^k} B.
\]
Moreover, the adjunction $H\dashv J$ restricts to the full subcategories of $\R(\sigma)$ and $\R(\sigma')$ defined by the structures with tree-width less than $k$ (this assertion is trivial for $J$; for $H$, this follows directly from \cite[Proposition~23]{DJR2021}).

Thus, Corollary~\ref{cor:hom-counting-comonads-finite-rank-relative} and Remark~\ref{rem:replace-fg-with-fp} entail the following result, which was first proved in \cite[Theorem~21]{DJR2021} for an arbitrary relational signature $\sigma$ (by means of an indirect argument) and generalises a result of Dvo\v{r}\'{a}k for graphs~\cite{dvovrak2010recognizing}:
\begin{theorem}
Let $\sigma$ be a finite relational signature and let $A,B$ be any two finite $\sigma$-structures. Then $A\equiv_{\CL^k} B$ if and only if, for all finite $\sigma$-structures $C$ with tree-width less than $k$, the number of homomorphisms $C\to A$ is the same as the number of homomorphisms $C\to B$.
\end{theorem}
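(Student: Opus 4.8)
The plan is to obtain the theorem as a direct instance of Corollary~\ref{cor:hom-counting-comonads-finite-rank-relative}, in exact parallel with the way the preceding characterisation of $\equiv_{\CL^k(\woeq)}$ followed from Corollary~\ref{cor:hom-counting-comonads-finite-rank}. Concretely, I would instantiate that corollary with $\A \coloneqq \R(\sigma)$, $\A' \coloneqq \R(\sigma')$ and the adjunction $H \dashv J$ recalled above (where $\sigma'$ adjoins a binary symbol $I$ to $\sigma$, interpreted by $J$ as the identity relation), and with $T \coloneqq \Pk$ on $\R(\sigma)$, $T' \coloneqq \Pk'$ on $\R(\sigma')$, together with their associated adjunctions $U \dashv F$ and $U' \dashv F'$. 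Since $\sigma$ and $\sigma'$ are finite relational signatures, $\R(\sigma)$ and $\R(\sigma')$ are locally finitely presentable categories in which finitely presentable and finitely generated objects coincide and are precisely the finite structures.

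Then I would verify the hypotheses of Corollary~\ref{cor:hom-counting-comonads-finite-rank-relative}. That $J$ is full and faithful with left adjoint $H$ is \cite[Lemma~25]{DJR2021}; that $\Pk'$ (and likewise $\Pk$) is finitary and of finite rank follows, as in the discussion preceding the theorem, from Lemma~\ref{l:sufficient-cond-finite-rank}, so by Lemma~\ref{l:preserves-reflects-fp-objects} the forgetful functors $U, U'$ preserve and reflect finitely presentable objects. Next, by \cite[Proposition~22]{Abramsky2017b} a $\sigma$-structure (resp.\ $\sigma'$-structure) carries a $\Pk$-coalgebra (resp.\ $\Pk'$-coalgebra) structure exactly when it has tree-width less than $k$; combining this with the coincidence of finitely generated and finitely presentable objects, the subcategory $U[\EM_\fg(\Pk)]$ is the full subcategory of $\R(\sigma)$ on the finite structures of tree-width less than $k$, and similarly $U'[\EM_\fg(\Pk')]$ on the $\sigma'$ side. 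It then remains to see that $H \dashv J$ restricts to functors between these subcategories: for $J$ this is immediate, since adjoining the identity relation leaves tree-width unchanged; for $H$ it is \cite[Proposition~23]{DJR2021}, which ensures $H$ does not raise tree-width.

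With the hypotheses in place, Corollary~\ref{cor:hom-counting-comonads-finite-rank-relative} gives that $F'J(A) \cong F'J(B)$ if and only if $\hom_{\R(\sigma)}(U(x), A) \cong \hom_{\R(\sigma)}(U(x), B)$ for all $x \in \EM(\Pk)_\fg$. By Remark~\ref{rem:replace-fg-with-fp} (applicable because finitely presentable $=$ finitely generated in $\R(\sigma)$), this family of conditions is equivalent to requiring $\hom_{\R(\sigma)}(U(x), A) \cong \hom_{\R(\sigma)}(U(x), B)$ only for those $x$ with $U(x)$ finitely presentable; by the identification above these $U(x)$ range precisely over the finite $\sigma$-structures $C$ of tree-width less than $k$. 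Since an isomorphism of the finite sets $\hom(C,A)$ and $\hom(C,B)$ is merely the equality of homomorphism counts, and since $F'J(A) \cong F'J(B)$ is equivalent to $A \equiv_{\CL^k} B$ by \cite[Theorem~18]{Abramsky2017b}, chaining these equivalences yields the theorem.

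The substantive content is all imported --- the finite-rank property of the pebbling comonads, the coalgebra/tree-width correspondence of \cite{Abramsky2017b}, the behaviour of $H$ on tree-width, and the logical characterisations of $F$ and $F'J$ --- so the only real work is bookkeeping: making sure the ``relative'' hypothesis of Corollary~\ref{cor:hom-counting-comonads-finite-rank-relative}, that $H \dashv J$ restricts to $U[\EM_\fg(\Pk)] \leftrightarrows U'[\EM_\fg(\Pk')]$, is correctly matched with these facts and that the finitely generated / finitely presentable distinction is handled uniformly across both signatures. This matching of the two subcategories with the tree-width-bounded structures is the step I expect to be the most delicate.
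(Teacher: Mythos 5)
Your proposal is correct and follows essentially the same route as the paper: the theorem is obtained by instantiating Corollary~\ref{cor:hom-counting-comonads-finite-rank-relative} with the adjunction $H\dashv J$ between $\R(\sigma)$ and $\R(\sigma')$, the pebbling comonads $\Pk$ and $\Pk'$, the coalgebra/tree-width correspondence, and the logical characterisation $F'J(A)\cong F'J(B)\Leftrightarrow A\equiv_{\CL^k}B$, with Remark~\ref{rem:replace-fg-with-fp} handling the finitely generated versus finitely presentable bookkeeping exactly as you describe. The identification of $U[\EM_\fg(\Pk)]$ with the finite structures of tree-width less than $k$ and the restriction of $H\dashv J$ to these subcategories are verified just as in the paper's argument.
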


\appendix

\section{Proper Factorisation Systems}\label{s:fact-systems}

In this section we recall the notion of proper factorisation system in a category $\A$, and some of its main properties.

Given arrows $e$ and $m$ in $\A$, we say that $e$ has the \emph{left lifting property} with respect to $m$, or that $m$ has the \emph{right lifting property} with respect to $e$, if for every commutative square as on the left-hand side below
\begin{equation*}
\begin{tikzcd}
{\cdot} \arrow{d} \arrow{r}{e} & {\cdot} \arrow{d} \\
{\cdot} \arrow{r}{m} & {\cdot}
\end{tikzcd}
\ \ \ \ \ \ \ \ \ \ \ \ \ 
\begin{tikzcd}
{\cdot} \arrow{d} \arrow{r}{e} & {\cdot} \arrow{d} \arrow[dashed]{dl}[swap, outer sep=-1pt]{d} \\
{\cdot} \arrow{r}{m} & {\cdot}
\end{tikzcd}
\end{equation*}
there exists a (not necessarily unique) \emph{diagonal filler}, i.e.\ an arrow $d$ such that the right-hand diagram above commutes. If this is the case, we write $e {\,\pit\,} m$. For any class $\mathscr{H}$ of morphisms in $\A$, let ${}^{\pit}\mathscr{H}$ (respectively $\mathscr{H}^{\pit}$) be the class of morphisms having the left (respectively right) lifting property with respect to every morphism in $\mathscr{H}$.

\begin{definition}\label{def:weak-f-s}
A pair of classes of morphisms $(\Q,\M)$ in a category $\A$ is a \emph{weak factorisation system} provided it satisfies the following conditions:
\begin{enumerate}[label=(\roman*)]
\item Every arrow $f$ in $\A$ decomposes as $f = m \circ e$ with $e\in \Q$ and $m\in \M$.
\item $\Q={}^{\pit}\M$ and $\M=\Q^{\pit}$.
\end{enumerate}
A \emph{proper factorisation system} is a weak factorisation system $(\Q,\M)$ such that all arrows in $\Q$ are epimorphisms and all arrows in $\M$ are monomorphisms. 

We refer to $\Q$-morphisms and $\M$-morphisms, respectively, as \emph{quotients} (denoted by $\epi$) and \emph{embeddings} (denoted by $\emb$).
\end{definition}

It is easy to see that any proper factorisation system is an \emph{orthogonal} factorisation system, meaning that the diagonal fillers are unique. In particular, factorisations are unique up to (unique) isomorphism. 

Furthermore, recall that a \emph{strong epimorphism} is an epimorphism that has the left lifting property with respect to all monomorphisms, and a \emph{strong monomorphism} is a monomorphism that has the right lifting property with respect to all epimorphisms. Given any proper factorisation system $(\Q,\M)$, the following inclusions are immediate:
\begin{gather*}
\{\text{strong monomorphisms}\}\subseteq \M \subseteq \{\text{monomorphisms}\}, \\
\{\text{strong epimorphisms}\}\subseteq \Q \subseteq \{\text{epimorphisms}\}.
\end{gather*}

Next, we state some well known properties of weak factorisation systems (cf.\ \cite{freyd1972categories} or~\cite{riehl2008factorization}):
\begin{lemma}\label{l:factorisation-properties}
Let $(\Q,\M)$ be a weak factorisation system in $\A$. The following statements hold:
\begin{enumerate}[label=(\alph*)]
\item\label{compositions} $\Q$ and $\M$ are closed under compositions.
\item\label{isos} $\Q\cap\M=\{\text{isomorphisms}\}$.
\item\label{pullbacks} The pullback of an embedding along any morphism, if it exists, is again an embedding.
\item\label{pushouts} The pushout of a quotient along any morphism, if it exists, is again a quotient.
\end{enumerate}
Moreover, if $(\Q,\M)$ is proper, then the following hold:
\begin{enumerate}[label=(\alph*)]\setcounter{enumi}{4}
\item\label{cancellation-e} $g\circ f\in \Q$ implies $g\in\Q$.
\item\label{cancellation-m} $g\circ f\in\M$ implies $f\in\M$.
\end{enumerate}
\end{lemma}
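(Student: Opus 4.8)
The plan is to argue entirely from the characterisation $\Q={}^{\pit}\M$ and $\M=\Q^{\pit}$, so that membership in either class is tested by a lifting condition against the other; parts~\ref{compositions}--\ref{pushouts} then follow formally, whereas \ref{cancellation-e}--\ref{cancellation-m} use properness in an essential way. For \ref{compositions}, to see that $\M=\Q^{\pit}$ is closed under composition I would take $m_1\colon x\to y$, $m_2\colon y\to z$ in $\M$, a quotient $e$, and an arbitrary lifting problem of $e$ against $m_2\circ m_1$; solving it first against $m_2$ (keeping the top and right sides, and replacing the left side by its composite with $m_1$) produces an arrow landing in $y$, and solving the resulting problem against $m_1$ produces a diagonal which, by construction, also fills the original square. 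Closure of $\Q$ under composition is the dual statement.

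For \ref{isos}: any isomorphism lies in ${}^{\pit}\mathscr H$ and in $\mathscr H^{\pit}$ for every class $\mathscr H$, since a lifting problem with an isomorphism on one side is solved by composing an appropriate side map with the inverse; hence $\{\text{isomorphisms}\}\subseteq\Q\cap\M$. Conversely, if $f\in\Q\cap\M$ then $f\,{\pit}\,f$, and applying this to the commutative square whose horizontal sides are identities and whose two vertical sides are both $f$ yields an arrow that is a two-sided inverse of $f$. For \ref{pullbacks}, given $m\in\M$ and a pullback of $m$ along a morphism $g$, with projections $g'$ and $m'$, I would transport a lifting problem of a quotient $e$ against $m'$ along the projection $g'$ into a lifting problem against $m$, solve the latter, and observe that its solution together with the original data forms a cone over the pullback; the induced mediating morphism is the required diagonal, commutativity of the two triangles coming from the universal property and from the construction, respectively. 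Part~\ref{pushouts} is the formal dual, applied to the pushout of a quotient.

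The crux is \ref{cancellation-e}--\ref{cancellation-m}. For \ref{cancellation-e}, assuming $g\circ f\in\Q$, I would factor $g=m\circ e$ with $e\in\Q$ and $m\in\M$: the square with top side $e\circ f$, left side $g\circ f$, right side $m$ and bottom side an identity commutes, so — since $g\circ f\in{}^{\pit}\M$ — it admits a diagonal filler, which exhibits $m$ as a split epimorphism. As $m$ is moreover a monomorphism (here properness enters), it is an isomorphism, whence $g=m\circ e$ lies in $\Q$ by \ref{compositions} and \ref{isos}. Part~\ref{cancellation-m} is dual: factor $f=m\circ e$, use $g\circ f\in\Q^{\pit}$ to split $e$, and note that a split monomorphism that is also an epimorphism is an isomorphism.

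I expect the routine bookkeeping in the composition and pullback chases — keeping track of which lifting problem is being solved and checking the auxiliary commutativities — to be the only delicate point in \ref{compositions}--\ref{pushouts}. The genuinely substantive step is the factorisation-plus-lifting manoeuvre in \ref{cancellation-e}--\ref{cancellation-m}: this is exactly where the hypothesis that every $\Q$-arrow is epic and every $\M$-arrow is monic is used, namely to force the auxiliary split epimorphism (resp.\ split monomorphism) to be an isomorphism.
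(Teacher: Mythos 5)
Your proof is correct. The paper states this lemma without proof, citing Freyd--Kelly and Riehl's notes, and your argument is precisely the standard one found there: parts (a)--(d) are formal consequences of the lifting characterisations $\Q={}^{\pit}\M$ and $\M=\Q^{\pit}$ (closure of a lifting class under composition, the retract argument for $\Q\cap\M$, and transport of lifting problems across (co)limit squares), while (e)--(f) use the factor-then-lift manoeuvre to produce a split epimorphism in $\M$ (resp.\ split monomorphism in $\Q$), which properness forces to be an isomorphism. All the auxiliary commutativities you defer to "routine bookkeeping" do check out, including the uniqueness clause of the pullback's universal property needed for the upper triangle in (c).
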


Let $\A$ be a category equipped with a proper factorisation system $(\Q,\M)$. In the same way that one usually defines the poset of subobjects of a given object $a\in\A$, we can define the poset of $\M$-subobjects of $a$. Given embeddings $m\colon b\emb a$ and $n\colon c\emb a$, let us say that $m\trianglelefteq n$ provided there is a morphism $i\colon b\to c$ such that $m=n\circ i$ (note that, if it exists, $i$ is necessarily an embedding). 
This yields a preorder on the class of all embeddings with codomain $a$. The symmetrization $\sim$ of $\trianglelefteq$ can be characterised as follows: $m\sim n$ if, and only if, $m=n\circ i$ for some isomorphism $i\colon b\to c$. 

Let $\M(a)$ be the class of $\sim$-equivalence classes of embeddings with codomain $a$, equipped with the natural partial order $\leq$ induced by~$\trianglelefteq$. We systematically represent a $\sim$-equivalence class by any of its representatives. As every embedding is a monomorphism, and we are assuming that all categories under consideration are well-powered (cf.\ Assumption~\ref{assump:well-(co)powered}), we see that $\M(a)$ is a set. We refer to $\M(a)$ as the \emph{poset of embeddings} of $a$. Likewise, dualising the construction outlined above, we consider the \emph{poset of quotients} of $a$, denoted by $\Q(a)$. Hence, for any two (equivalence classes of) quotients $f\colon a\epi b$ and $g\colon a\epi c$, we have $f\leq g$ if and only if $f$ factors through $g$.

\begin{remark}\label{rem:dual-proper-f-s}
Let $(\Q,\M)$ be a proper factorisation system in $\A$. Recall from Section~\ref{s:polyad-spaces-SK} that $\A$ is $\Q$-well-founded (respectively, $\M$-well-founded) if, for all objects $a\in \A$, the poset $\Q(a)$ (respectively, $\M(a)$) is well-founded. 

Let $\Q'$ be the class of morphisms in $\A^\op$ obtained by reversing the arrows in $\M$, and $\M'$ the class of morphisms in $\A^\op$ obtained by reversing the arrows in $\Q$. Then $(\Q',\M')$ is a proper factorisation system in~$\A^\op$, and $\A$ is $\Q$-well-founded if and only if $\A^\op$ is $\M'$-well-founded. Just observe that the posets $\Q(a)$ and $\M'(a)$ are isomorphic for all objects $a$ (because, intuitively, the notion of `factoring through' is self-dual).
\end{remark}

\begin{lemma}\label{l:amalg-quot}
Let $\A$ be a category equipped with a proper factorisation system $(\Q,\M)$. The following statements hold:
\begin{enumerate}[label=(\alph*)]
\item\label{comma-cat-quot} Any commutative triangle in $\A$ admits a commutative subdivision as displayed below.
\[\begin{tikzcd}[column sep=0.7, row sep=4]
{} & {} & {} & {} & {\cdot} \arrow{dddllll} \arrow{dddrrrr} \arrow[twoheadrightarrow,dashed,shorten >=-0.5ex]{ddl} \arrow[twoheadrightarrow,dashed,shorten >=-0.5ex]{ddr} & {} & {} & {} & {} \\
{} & {} & {} & {} & {} & {} & {} & {} & {} \\
{} & {} & {} & {\cdot} \arrow[twoheadrightarrow,dashed]{rr} \arrow[rightarrowtail,dashed,shorten >=1.5ex]{dlll} & {} & {\cdot} \arrow[rightarrowtail,dashed,shorten >=1.5ex]{drrr} & {} & {} & \\
{\cdot} \arrow{rrrrrrrr} & {} & {} & {} & {} & {} & {} & {} & {\cdot}
\end{tikzcd}\]
\item\label{span-of-quot-compl} If $\A$ has the amalgamation property, then any span of the form 
$\begin{tikzcd}[column sep=1.5em]
{\cdot} & {\cdot} \arrow{l} \arrow[twoheadrightarrow]{r} & {\cdot} 
\end{tikzcd}$ 
can be completed to a commutative square as follows:
\[\begin{tikzcd}
{\cdot} \arrow[twoheadrightarrow]{r} \arrow{d} & {\cdot} \arrow[dashed]{d} \\
{\cdot} \arrow[dashed,twoheadrightarrow]{r} & {\cdot}
\end{tikzcd}\]
In particular, any span of quotients in $\A$ can be completed to a commutative square of quotients.
\end{enumerate}
\end{lemma}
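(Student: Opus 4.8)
The plan is to establish the two parts using nothing beyond the $(\Q,\M)$-factorisation and the lifting property it grants. For part~\ref{comma-cat-quot}, start from a commutative triangle with apex $a$, feet $b$ and $c$, and edges $a\to b$, $a\to c$, $b\to c$ satisfying $(b\to c)\circ(a\to b)=(a\to c)$. First take the $(\Q,\M)$-factorisation $a\epi p\emb b$ of $a\to b$, and then the $(\Q,\M)$-factorisation $p\epi q\emb c$ of the composite $p\emb b\to c$. These two steps supply precisely the arrows appearing in the subdivision: the quotients $a\epi p$ and $p\epi q$, whose composite $a\epi q$ is again a quotient by Lemma~\ref{l:factorisation-properties}\ref{compositions}, together with the embeddings $p\emb b$ and $q\emb c$. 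Every triangular face of the subdivision commutes directly from one of the two factorisations, except the outer right face $a\to c=(q\emb c)\circ(p\epi q)\circ(a\epi p)$, which I obtain by substituting the first factorisation and then the second into $(b\to c)\circ(a\to b)=(a\to c)$; this incidentally exhibits $(q\emb c)\circ(a\epi q)$ as the $(\Q,\M)$-factorisation of $a\to c$.

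For part~\ref{span-of-quot-compl}, given a quotient $f\colon a\epi c$ and a morphism $u\colon a\to b$, first forget that $f$ is a quotient and invoke the amalgamation property of $\A$ to complete the plain span $b\xleftarrow{u}a\xrightarrow{f}c$ to a commutative square, say with new arrows $f'\colon b\to d'$ and $g'\colon c\to d'$ satisfying $f'\circ u=g'\circ f$. Now factor $f'$ as $f'=m\circ e$ with a quotient $e\colon b\epi d$ and an embedding $m\colon d\emb d'$. The square with top edge $f$, left edge $e\circ u\colon a\to d$, bottom edge $m$ and right edge $g'$ commutes, both composites being equal to $g'\circ f$; since $f\in\Q$ has the left lifting property with respect to $m\in\M$, there is a diagonal filler $h\colon c\to d$ with $h\circ f=e\circ u$. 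Then $f$, $u$, $h$, $e$ form the desired commutative square, with $e$ a quotient. For the final clause, if $u$ is moreover a quotient, then $h\circ f=e\circ u$ is a composite of quotients and hence a quotient, so $h\in\Q$ by Lemma~\ref{l:factorisation-properties}\ref{cancellation-e}, and all four edges of the completing square are quotients.

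Neither step presents a genuine obstacle, and both are essentially exercises in the correct use of factorisation and lifting. The only point needing care is in part~\ref{span-of-quot-compl}: rather than trying to ``repair'' the square produced by amalgamation after the fact, one feeds the factorisation of $f'$ into the lifting of $f$ against the embedding $m$, which is what forces the relevant edge to be a quotient. In part~\ref{comma-cat-quot} the work is purely bookkeeping, namely checking that two successive factorisations populate every arrow and every face of the displayed subdivision.
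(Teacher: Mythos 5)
Your proof is correct in both parts. Part~\ref{span-of-quot-compl} is essentially the paper's own argument: amalgamate the plain span, factorise the new arrow out of $b$, and lift the quotient $f$ against the resulting embedding to redirect the square into the $\Q$-part of that factorisation; the final clause via Lemma~\ref{l:factorisation-properties}\ref{compositions},\ref{cancellation-e} also matches. Part~\ref{comma-cat-quot} takes a mildly different route: the paper factorises \emph{both} legs $f_1=m_1\circ e_1$ and $f_2=m_2\circ e_2$ independently and then produces the middle arrow as a diagonal filler in the square $m_2\circ e_2=(g\circ m_1)\circ e_1$, needing Lemma~\ref{l:factorisation-properties}\ref{cancellation-e} to see that this filler is a quotient; you instead factorise only the left leg and then factorise the composite of the left embedding with the base, which hands you the middle quotient and the right leg's factorisation simultaneously. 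Your version dispenses with both the lifting property and the cancellation law in part~\ref{comma-cat-quot}, so it would in fact go through for an arbitrary weak factorisation system (only closure of $\Q$ under composition is used), whereas the paper's argument leans on properness there; the two constructions of course agree up to isomorphism by uniqueness of $(\Q,\M)$-factorisations.
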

\begin{proof}
For item (a), consider morphisms $f_1,f_2,g$ in $\A$ such that ${g\circ f_1=f_2}$. Let $(e_1,m_1)$ and $(e_2,m_2)$ be the $(\Q,\M)$ factorisations of $f_1$ and $f_2$, respectively. We obtain a commutative diagram as follows.
\[\begin{tikzcd}[column sep=0.7, row sep=4]
{} & {} & {} & {} & {\cdot} \arrow[dddllll, relay arrow=-1.5ex, swap, "f_1"]  \arrow[dddrrrr, relay arrow=1.5ex, "f_2"] \arrow[twoheadrightarrow,dashed,swap,shorten >=-0.5ex]{ddl}{e_1}\arrow[twoheadrightarrow,dashed,shorten >=-0.5ex]{ddr}{e_2} & {} & {} & {} & {} \\
{} & {} & {} & {} & {} & {} & {} & {} & {} \\
{} & {} & {} & {\cdot} \arrow[rightarrowtail,dashed,swap,shorten >=1.5ex]{dlll}{m_1} & {} & {\cdot} \arrow[rightarrowtail,dashed,shorten >=1.5ex]{drrr}{m_2} & {} & {} & \\
{\cdot} \arrow{rrrrrrrr}{g} & {} & {} & {} & {} & {} & {} & {} & {\cdot}
\end{tikzcd}\]
Then the following square admits a diagonal filler,
\[\begin{tikzcd}[column sep=1.7em, row sep=1.2em]
{\cdot} \arrow[twoheadrightarrow]{dd}[swap]{e_2} \arrow[twoheadrightarrow]{rr}{e_1} & & {\cdot} \arrow[rightarrowtail,shorten >=-0.5ex]{d}{m_1} \arrow[dashed,shorten >=1.0ex,shorten <=1.0ex]{ddll} \\
& & \arrow[shorten <=-0.5ex]{d}{g} \\
{\cdot} \arrow[rightarrowtail]{rr}{m_2} & & {\cdot}
\end{tikzcd}\]
which is a quotient by Lemma~\ref{l:factorisation-properties}\ref{cancellation-e}. This settles the statement.

For item (b), suppose we have a span 
$\begin{tikzcd}[column sep=1.5em]
{\cdot} & {\cdot} \arrow{l} \arrow[twoheadrightarrow]{r} & {\cdot} 
\end{tikzcd}$ 
in $\A$. As $\A$ has the amalgamation property, we can complete this span to a commutative square as on the left-hand side below.
\begin{center}
\begin{tikzcd}
{\cdot} \arrow[twoheadrightarrow]{r} \arrow{d} & {\cdot} \arrow[dashed]{d} \\
{\cdot} \arrow[dashed]{r} & {\cdot}
\end{tikzcd}
\ \ \ \ \ \ \ \ 
\begin{tikzcd}
{\cdot} \arrow[twoheadrightarrow]{r} \arrow{d} & {\cdot} \arrow[dashed]{d} \arrow{dr} & \\
{\cdot} \arrow[twoheadrightarrow]{r} & {\cdot} \arrow[rightarrowtail]{r} & {\cdot}
\end{tikzcd}
\end{center}
If we consider the $(\Q,\M)$ factorisation of the lower horizontal morphism, then we obtain a diagonal filler as on the right-hand side above. This yields the desired completion of the original span. For the second part of the statement, note that in any commuting square of the form
\[\begin{tikzcd}
{\cdot} \arrow[twoheadrightarrow]{r} \arrow[twoheadrightarrow]{d} & {\cdot} \arrow{d} \\
{\cdot} \arrow[twoheadrightarrow]{r} & {\cdot}
\end{tikzcd}\]
the right vertical morphism must be a quotient by Lemma~\ref{l:factorisation-properties}\ref{compositions},\ref{cancellation-e}.
\end{proof}

Let $F\colon \A^\op\to \Set$ be a presheaf, and let $U\colon \int F \to \A$ be the natural forgetful functor defined on the category of elements of $F$. If $\A$ is equipped with a proper factorisation system $(\Q,\M)$, then we can equip $\int F$ with a factorisation system $(\Q',\M')$ by simply letting $\Q'$ be the class of morphisms $f$ in $\int F$ such that $U(f)\in \Q$, and $\M'$ the class of morphisms $g$ in $\int F$ such that $U(g)\in\M$. It is not difficult to see that $(\Q',\M')$ is a proper factorisation system. In the proof of the next lemma, we will assume that $\int F$ is equipped with this factorisation system.

\begin{lemma}\label{l:amalg-quotients}
Let $\A$ be a category equipped with a proper factorisation system, and let $F\colon \A^\op \to\Set$ be a polyadic set. For all spans 
\[\begin{tikzcd}
b & {\cdot} \arrow{l}[swap]{g} \arrow[twoheadrightarrow]{r}{f} & a
\end{tikzcd}\]
in $\A$ and elements $x\in F(a)$ and $y\in F(b)$ such that $F(f)(x)=F(g)(y)$, there exist a commutative square
\[\begin{tikzcd}
{\cdot} \arrow[twoheadrightarrow]{r}{f} \arrow{d}[swap]{g} & a \arrow[dashed]{d}{g'} \\
b \arrow[dashed,twoheadrightarrow]{r}{f'} & c
\end{tikzcd}\]
in $\A$ and $z\in F(c)$ such that $F(g')(z)=x$ and $F(f')(z)=y$. Moreover, if $g$ is a quotient, we can assume that $g'$ is also a quotient.
\end{lemma}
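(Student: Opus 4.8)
The plan is to transfer the problem into the category of elements $\int F$, where the amalgamation property is available by hypothesis, and then invoke Lemma~\ref{l:amalg-quot}\ref{span-of-quot-compl}. Recall from the paragraph preceding the statement that $\int F$ carries the proper factorisation system $(\Q',\M')$ in which a morphism lies in $\Q'$ exactly when its image under the forgetful functor $U\colon \int F\to\A$ lies in $\Q$. Since $F$ is a polyadic set, $\int F$ has the amalgamation property by definition; thus $\int F$, together with $(\Q',\M')$, meets the hypotheses of Lemma~\ref{l:amalg-quot}.

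First I would rewrite the data of the lemma inside $\int F$. Denote by $d$ the apex of the given span, so $g\colon d\to b$ and $f\colon d\to a$, and put $w\coloneqq F(f)(x)=F(g)(y)\in F(d)$. Then $f$ and $g$ become morphisms $(d,w)\to(a,x)$ and $(d,w)\to(b,y)$ of $\int F$, and $f$ lies in $\Q'$ because $U(f)=f$ is a quotient in $\A$. We therefore have a span
\[\begin{tikzcd}
(b,y) & (d,w) \arrow{l}[swap]{g} \arrow[twoheadrightarrow]{r}{f} & (a,x)
\end{tikzcd}\]
in $\int F$ whose right leg is a quotient.

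Next I would apply Lemma~\ref{l:amalg-quot}\ref{span-of-quot-compl} to this span. It produces an object $(c,z)$ of $\int F$ and morphisms $g'\colon(a,x)\to(c,z)$ and $f'\colon(b,y)\to(c,z)$ of $\int F$, with $f'\in\Q'$, making the resulting square commute. Translating back along $U$, we obtain $c\in\A$, an element $z\in F(c)$, morphisms $g'\colon a\to c$ and $f'\colon b\to c$ in $\A$ with $g'\circ f=f'\circ g$ and $f'$ a quotient, and---since $g'$ and $f'$ are morphisms of $\int F$ with codomain $(c,z)$---the identities $F(g')(z)=x$ and $F(f')(z)=y$. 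This is precisely the first assertion. For the ``moreover'' clause, suppose in addition that $g$ is a quotient. Then $f'\circ g\in\Q$ by Lemma~\ref{l:factorisation-properties}\ref{compositions}, hence $g'\circ f\in\Q$ by commutativity, and therefore $g'\in\Q$ by Lemma~\ref{l:factorisation-properties}\ref{cancellation-e}. (Alternatively, in this case the span above is a span of quotients in $\int F$, so one may instead invoke the final sentence of Lemma~\ref{l:amalg-quot}\ref{span-of-quot-compl}.)

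There is no genuinely hard step here; the only points requiring care are the bookkeeping of arrow directions in $\int F$---a morphism $(a',x')\to(a,x)$ being a map $h\colon a'\to a$ with $F(h)(x)=x'$, which is exactly what makes the conditions $F(g')(z)=x$ and $F(f')(z)=y$ fall out automatically---and the observation that the proof of Lemma~\ref{l:amalg-quot}\ref{span-of-quot-compl} uses only the amalgamation property together with the proper factorisation system, so that it does apply verbatim to $\int F$.
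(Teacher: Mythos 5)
Your proposal is correct and is exactly the paper's argument: the paper's proof consists of the single line ``This follows directly from Lemma~\ref{l:amalg-quot}\ref{span-of-quot-compl} by setting $\A\coloneqq \int F$'', and your write-up simply spells out the bookkeeping (the induced proper factorisation system on $\int F$, the translation of the span and elements, and the cancellation argument for the ``moreover'' clause) that the paper leaves implicit.
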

\begin{proof}
This follows directly from Lemma~\ref{l:amalg-quot}\ref{span-of-quot-compl} by setting $\A\coloneqq \int F$.
\end{proof}

\section{Comonads of Finite Rank}\label{s:finite-rank}

Given a monad on a locally finitely presentable category, we may ask if the associated category of Eilenberg-Moore algebras is locally finitely presentable. The answer is positive, provided the monad is finitary (see e.g.~\cite[p.~124]{AR1994}). The analogous question for \emph{comonads} requires more care and leads to the notion of comonad of finite rank (Definition~\ref{def:finite-rank} below). 
In this section we shall collect some basic facts about comonads of finite rank; most of the material presented here can be found in Diers' monograph~\cite{Diers1986}, albeit in a more condensed form (detailed references are provided below).

Let $(T,\delta,\epsilon)$ be a comonad on a category $\A$, with comultiplication $\delta$ and counit $\epsilon$. Recall that an \emph{Eilenberg-Moore coalgebra} for $T$ is a pair $(a,g)$ where $a$ is an object of $\A$ and $g\colon a\to T(a)$ is a morphism in $\A$ such that the following diagrams commute.
\begin{equation*}
\begin{tikzcd}
a \arrow{r}{g} \arrow[swap]{d}{g} & T(a) \arrow{d}{\delta_a} \\
T(a) \arrow{r}{T(g)} & T^2 (a)
\end{tikzcd}
\ \ \ \ \ \ 
\begin{tikzcd}
a \arrow{r}{g} \arrow{dr}[swap]{\id_a} & T(a) \arrow{d}{\epsilon_a} \\
{} & a
\end{tikzcd}
\end{equation*}
A morphism of coalgebras $(a,g)\to(b,h)$ is an arrow $f\colon a\to b$ in $\A$ satisfying $T(f)\circ g=h\circ f$. Eilenberg-Moore coalgebras and their morphisms form a category, denoted by $\EM(T)$. The forgetful functor $U\colon \EM(T)\to\A$, sending a coalgebra morphism $(a,g)\to(b,h)$ to the corresponding arrow $a\to b$, has a right adjoint $F\colon \A\to\EM(T)$ which sends an arrow $f\colon a\to b$ in $\A$ to $T(f)\colon (T(a), \delta_a)\to (T(b),\delta_b)$. We refer to $U\dashv F$ as the adjunction canonically associated with $T$.

Recall that a functor is \emph{finitary} if it preserves directed colimits. By extension, we say that a comonad is finitary if so is its underlying functor. 

The next lemma, which provides sufficient conditions for a comonad to be finitary, is an immediate consequence of a result of Ad\'{a}mek, Milius, Sousa and Wissmann \cite[Theorem~3.4]{AMSW2019}. Note that item~(ii) below is a direct generalisation of the `$\epsilon$-$\delta$ characterisation' of continuity for Scott continuous maps between algebraic domains, see e.g.\ \cite[Proposition~II-2.1]{GHKLMS2003} (we are grateful to Samson Abramsky for pointing this out to us).

\begin{lemma}\label{l:sufficient-cond-finite-rank}
Let $\A$ be a locally finitely presentable category in which every finitely generated object is finitely presentable, and let $T$ be a comonad on $\A$ that preserves monomorphisms. The following statements are equivalent:
\begin{enumerate}[label=(\roman*)]
\item $T$ is finitary.
\item For all $a\in \A$, every subobject $b\emb T(a)$ with $b$ finitely presentable factors through the $T$-image of a finitely presentable subobject of $a$.
\end{enumerate}
\end{lemma}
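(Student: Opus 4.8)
The plan is to reduce the statement to \cite[Theorem~3.4]{AMSW2019}, which characterises the finitary endofunctors of a locally finitely presentable category $\A$ in which every finitely generated object is finitely presentable: a functor $F\colon\A\to\A$ is finitary if and only if, for every $a\in\A$, each morphism $f\colon b\to F(a)$ with $b$ finitely presentable factors through $F(m)$ for some subobject $m\colon a'\emb a$ with $a'$ finitely presentable. Since $T$ preserves monomorphisms by hypothesis, $T(m)$ is again a monomorphism, so `the $T$-image of a finitely presentable subobject of $a$' is literally a subobject of $T(a)$; condition~(ii) is then precisely the restriction of the above factorisation property to those $f$ which are themselves monomorphisms with finitely presentable domain. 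In particular, the implication $(i)\Rightarrow(ii)$ is immediate.

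For $(ii)\Rightarrow(i)$ I would recover the unrestricted factorisation property from its restriction to monomorphisms and then invoke \cite[Theorem~3.4]{AMSW2019}. Let $f\colon b\to T(a)$ be any morphism with $b$ finitely presentable, and take a factorisation of $f$ as a strong epimorphism $e\colon b\epi b'$ followed by a monomorphism $i\colon b'\emb T(a)$; such a factorisation exists because $\A$, being locally finitely presentable, carries the proper factorisation system in which $\Q$ is the class of strong epimorphisms and $\M$ the class of monomorphisms. Then $b'$ is a strong quotient of the finitely presentable object $b$, hence finitely generated, hence finitely presentable by the hypothesis on $\A$; thus $i$ exhibits $b'$ as a finitely presentable subobject of $T(a)$. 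By~(ii), $i$ factors through $T(m)$ for some finitely presentable subobject $m\colon a'\emb a$, and precomposing with $e$ shows that $f$ factors through $T(m)$ as well. This is exactly the criterion of \cite[Theorem~3.4]{AMSW2019}, whence $T$ is finitary.

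The substance of the argument lies entirely in \cite[Theorem~3.4]{AMSW2019}; what remains are two routine checks which I do not expect to cause any difficulty. First, one confirms that the hypotheses of that theorem coincide with those assumed here, namely local finite presentability of $\A$ together with the coincidence of finitely generated and finitely presentable objects. Second, should \cite[Theorem~3.4]{AMSW2019} be phrased in terms of preservation of the canonical filtered diagrams $\A_\fp\down a\to\A$ rather than in terms of finitely presentable subobjects, one additionally appeals to the standard fact from the theory of locally finitely presentable categories that, when $\A_\fg=\A_\fp$, the finitely presentable subobjects of $a$ form a cofinal subdiagram of $\A_\fp\down a$ — again via the strong-epimorphism/monomorphism factorisation — so that preservation of the one colimit amounts to preservation of the other; see~\cite{AR1994}.
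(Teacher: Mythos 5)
Your reduction to \cite[Theorem~3.4]{AMSW2019} is exactly the paper's proof: the lemma is presented there as an immediate consequence of that theorem, with no further argument given. Your additional bridging step --- passing between the factorisation condition for arbitrary morphisms out of finitely presentable objects and its restriction to monomorphisms, via the (strong epimorphism, monomorphism) factorisation system and the hypothesis that finitely generated objects are finitely presentable --- is correct and simply makes the citation robust to the exact phrasing of the cited theorem.
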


The following useful fact was proved by Diers in \cite[p.~45]{Diers1986}, where some details were omitted; for the sake of completeness, we offer a detailed proof.

\begin{lemma}\label{l:preserves-reflects-fp-objects}
Let $T$ be a finitary comonad on a category $\A$, with canonically associated adjunction $U\dashv F\colon \A\to \EM(T)$. The following statements hold:
\begin{enumerate}[label=(\alph*)]
\item\label{co-free-directed-colim} $F$ preserves directed colimits.
\item\label{forgetful-fp} $U$ preserves and reflects finitely presentable objects.
\end{enumerate}
\end{lemma}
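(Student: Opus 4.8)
The plan is to establish part~\ref{co-free-directed-colim} first and to deduce part~\ref{forgetful-fp} from it, relying on two structural facts about $\EM(T)$: the forgetful functor $U$ is faithful, and it creates all colimits --- the latter being dual to the classical fact that the forgetful functor from the Eilenberg--Moore category of a \emph{monad} creates all limits (cf.~\cite{MacLane1998}). In particular, $U$ creates directed colimits and, being a left adjoint, preserves all colimits.

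\textbf{Part~\ref{co-free-directed-colim}.} Let $(a_i)_{i\in I}$ be a directed diagram in $\A$ with colimit $a$ and cocone $(\mu_i\colon a_i\to a)$. Since $T$ is finitary, the diagram $i\mapsto UF(a_i)=T(a_i)$ has colimit $T(a)$ with cocone $(T(\mu_i))$. As $U$ creates directed colimits, the diagram $(F(a_i))$ has a colimit in $\EM(T)$, and the unique cocone over it lying above $(T(a),(T(\mu_i)))$ is a colimit cocone. But $(F(a),(F(\mu_i)))$ is a cocone over $(F(a_i))$ whose $U$-image is precisely $(T(a),(T(\mu_i)))$; by uniqueness of the lift, $(F(a),(F(\mu_i)))$ is the colimit cocone. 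Hence $F(a)\cong\colim_i F(a_i)$, i.e.\ $F$ preserves directed colimits.

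\textbf{Part~\ref{forgetful-fp}.} \emph{Preservation.} If $x\in\EM(T)$ is finitely presentable and $(a_i)$ is a directed diagram in $\A$ with colimit $a$, then successively applying the adjunction $U\dashv F$, part~\ref{co-free-directed-colim}, finite presentability of $x$, and the adjunction again produces natural isomorphisms $\hom_\A(U(x),a)\cong\hom_{\EM(T)}(x,F(a))\cong\hom_{\EM(T)}(x,\colim_i F(a_i))\cong\colim_i\hom_{\EM(T)}(x,F(a_i))\cong\colim_i\hom_\A(U(x),a_i)$, compatibly with the canonical comparison maps; so $U(x)$ is finitely presentable. \emph{Reflection.} Write $x=(a,\gamma)$ with $a=U(x)$ finitely presentable, and let $(y_i)_{i\in I}=((a_i,\gamma_i))_{i\in I}$ be a directed diagram in $\EM(T)$ with colimit $y=(b,\delta)$ and cocone $(\lambda_i)$; since $U$ preserves this colimit, $b\cong\colim_i a_i$ with cocone $(\mu_i:=U(\lambda_i))$, and the underlying maps $\phi_{ij}\colon a_i\to a_j$ of the transition morphisms, as well as the $\mu_i$, are coalgebra morphisms. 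We must show the canonical map $\colim_i\hom_{\EM(T)}(x,y_i)\to\hom_{\EM(T)}(x,y)$ is a bijection. Injectivity is immediate from faithfulness of $U$ and finite presentability of $a$. For surjectivity, let $g\colon x\to y$ be a coalgebra morphism; as $a$ is finitely presentable, $U(g)=\mu_i\circ h$ for some $i\in I$ and some $h\colon a\to a_i$ in $\A$, which need not respect the coalgebra structures. Since $g$ and $\mu_i$ are coalgebra morphisms, however, one computes $T(\mu_i)\circ(T(h)\circ\gamma)=T(\mu_i)\circ(\gamma_i\circ h)$; and $T(b)\cong\colim_k T(a_k)$ because $T$ is finitary, so the two parallel maps $T(h)\circ\gamma$ and $\gamma_i\circ h$ from $a$ to $T(a_i)$, which coincide after postcomposition with $T(\mu_i)$, must already coincide after postcomposition with $T(\phi_{ik})$ for some $k\geq i$, as $a$ is finitely presentable. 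Putting $h':=\phi_{ik}\circ h$ and using that $\phi_{ik}$ is a coalgebra morphism gives $T(h')\circ\gamma=\gamma_k\circ h'$, so $h'\colon x\to y_k$ is a coalgebra morphism; moreover $U(\lambda_k\circ h')=\mu_k\circ\phi_{ik}\circ h=\mu_i\circ h=U(g)$, whence $\lambda_k\circ h'=g$ by faithfulness of $U$. Thus $x$ is finitely presentable.

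The crux of the argument is the last surjectivity step: promoting the factoring morphism $h$ to an honest coalgebra morphism by passing to a later stage of the diagram. This is the only point where finitariness of $T$ (needed to identify $T(b)$ with $\colim_k T(a_k)$) and finite presentability of the underlying object $U(x)$ are both genuinely used; the rest is formal bookkeeping with adjunctions and with the creation of colimits by $U$.
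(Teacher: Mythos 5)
Your proof is correct and follows essentially the same route as the paper's: part~(a) from finitarity of $T=UF$ together with the standard colimit-behaviour of the Eilenberg--Moore forgetful functor (you invoke creation of colimits where the paper invokes preservation plus reflection of isomorphisms, which is only a cosmetic difference), and part~(b) via the same adjunction computation for preservation and the same two-stage argument for reflection, in which the factoring map $h$ is upgraded to a coalgebra morphism at a later stage of the diagram using finitarity of $T$ and finite presentability of $U(x)$. No gaps.
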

\begin{proof}
For item~(a), let $\{c_i\mid i\in I\}$ be an arbitrary directed diagram in $\A$. Since $T$ is finitary and $U$ preserves colimits (as it is left adjoint),
\[
UF(\colim_{i\in I}a_i)\cong \colim_{i\in I} UF(a_i)\cong U(\colim_{i\in I}F(a_i)).
\]
That is, if $\phi\colon \colim_{i\in I}F(a_i)\to F(\colim_{i\in I}a_i)$ denotes the obvious mediating morphism, then $U(\phi)$ is an isomorphism.
Using the fact that $U$ reflects isomorphisms (see e.g.\ \cite[Proposition~20.12(5)]{AHS1990} for a proof of the dual fact), we deduce that $F(\colim_{i\in I}a_i)\cong \colim_{i\in I}F(a_i)$.

For item~(b), consider an arbitrary coalgebra $x=(b,h)\in \EM(T)$. If $x$ is finitely presentable then, for any directed diagram $\{a_i\mid i\in I\}$ in $\A$,
\begin{align*}
\hom_{\A}(U(x),\colim_{i\in I}a_i)&\cong \hom_{\EM(T)}(x, F(\colim_{i\in I}a_i)) \\
&\cong \hom_{\EM(T)}(x, \colim_{i\in I}F(a_i)) \tag*{by~(a)}\\
&\cong \colim_{i\in I} \hom_{\EM(T)}(x, F(a_i)) \\
&\cong \colim_{i\in I} \hom_{\A}(U(x), a_i).
\end{align*}
Thus, $U$ preserves finitely presentable objects.

To see that $U$ reflects finitely presentable objects, assume that $b=U(x)$ is finitely presentable. Let $\{(a_i,g_i)\mid i\in I\}$ be a directed diagram in $\EM(T)$ and $\alpha_i\colon (a_i,g_i)\to (a,g)$ a colimit cocone. We claim that the function
\begin{equation}\label{eq:comparison-homsets}
\colim_{i\in I}\hom_{\EM(T)}((b,h),(a_i,g_i)) \ \longrightarrow \ \hom_{\EM(T)}((b,h),(a,g))
\end{equation}
induced by the cocone
\[
\alpha_i\circ -\colon \hom_{\EM(T)}((b,h),(a_i,g_i)) \ \longrightarrow \ \hom_{\EM(T)}((b,h),(a,g))
\] 
is bijective, thus showing that $x=(b,h)$ is finitely presentable.

Suppose that $f_i\colon (b,h)\to (a_i,g_i)$ and $f_j\colon (b,h)\to (a_j,g_j)$ are morphisms in $\EM(T)$ such that $\alpha_i \circ f_i=\alpha_j \circ f_j$. As the diagram $I$ is directed, there are transition morphisms $\phi_{ik}\colon (a_i,g_i)\to (a_k,g_k)$ and $\phi_{jk}\colon (a_j,g_j)\to (a_k,g_k)$ such that 
\[
\alpha_k \circ \phi_{ik}=\alpha_i \ \text{ and } \ \alpha_k \circ \phi_{jk}=\alpha_j.
\] 
Hence $\alpha_k\circ \phi_{ik}\circ f_i=\alpha_k \circ \phi_{jk} \circ f_j$. By the definition of finitely presentable object, there exists a transition morphism $\phi_{kk'}$ such that $\phi_{ik'}\circ f_i=\phi_{jk'}\circ f_j$. That is, $f_i=f_j$ in $\colim_{i\in I}\hom_{\EM(T)}((b,h),(a_i,g_i))$.
Therefore, the map in~\eqref{eq:comparison-homsets} is injective.

For surjectivity, consider an arbitrary morphism $f\colon (b,h)\to (a,g)$ in $\EM(T)$. Because $U$ preserves colimits, $U(f)$ belongs to 
\[
\hom_{\A}(b,a)\cong \hom_{\A}(b,\colim_{i\in I}a_i)\cong \colim_{i\in I} \hom_{\A}(b, a_i),
\]
so there exist $i\in I$ and $f_i\colon b\to a_i$ such that $U(f)=U(\alpha_i) \circ f_i$.
\begin{claim*}
$TU(\alpha_i) \circ g_i \circ f_i = TU(\alpha_i) \circ T(f_i) \circ h$.
\end{claim*}
\begin{proof}[Proof of Claim]
The commutativity of the diagrams
\begin{equation*}
\begin{tikzcd}
b \arrow{r}{f_i} \arrow{dr}[swap]{U(f)} & a_i \arrow{d}{U(\alpha_i)} \arrow{r}{g_i} & T(a_i) \arrow{d}{TU(\alpha_i)} \\
{} & a \arrow{r}{g} & T(a)
\end{tikzcd}
\quad \quad \quad \quad 
\begin{tikzcd}
b \arrow{r}{h} \arrow{d}[swap]{U(f)} & T(b) \arrow{d}{TU(f)} \\
a \arrow{r}{g} & T(a)
\end{tikzcd}
\end{equation*}
entails that $TU(\alpha_i) \circ g_i \circ f_i = TU(f) \circ h$. Since $U(f)=U(\alpha_i) \circ f_i$, we obtain
\[
TU(\alpha_i) \circ g_i \circ f_i = T(U(\alpha_i) \circ f_i) \circ h = TU(\alpha_i) \circ T(f_i) \circ h.\qedhere
\]
\end{proof}
\noindent As $T$ preserves directed colimits, $T(a)$ is the colimit of the cocone given by the morphisms $TU(\alpha_i)\colon T(a_i)\to T(a)$. By the Claim and the definition of finitely presentable object, there is a transition morphism $U(\phi_{ij})\colon a_i\to a_j$ satisfying $TU(\phi_{ij})\circ g_i \circ f_i = TU(\phi_{ij}) \circ T(f_i) \circ h$. Because $TU(\phi_{ij})\circ g_i=g_j \circ U(\phi_{ij})$, we obtain
\[
g_j \circ U(\phi_{ij}) \circ f_i = T(U(\phi_{ij}) \circ f_i) \circ h,
\]
showing that $U(\phi_{ij})\circ f_i\colon (b,h)\to (a_j,g_j)$ is a morphism in $\EM(T)$.
Furthermore, note that the following diagram commutes
\[\begin{tikzcd}
b \arrow{dr}[swap]{U(\phi_{ij})\circ f_i} \arrow{rr}{U(f)} & & a \\
{} & a_j \arrow{ur}[swap]{U(\alpha_j)} &
\end{tikzcd}\]
and so the function in~\eqref{eq:comparison-homsets} is surjective.
\end{proof}

\begin{definition}\label{def:finite-rank}
Let $T$ be a comonad on a category $\A$, with forgetful functor $U\colon \EM(T)\to \A$. We say that $T$ is of \emph{finite rank} provided that the following conditions are satisfied:
\begin{enumerate}[label=(\roman*)]
\item $T$ is finitary.
\item Every morphism $f\colon a\to U(x)$ in $\A$, with $a$ finitely presentable, admits a factorisation
\[\begin{tikzcd}[column sep=3em]
a \arrow[rr, relay arrow=3ex, "f"] \arrow[r, "\beta"] & U(y) \arrow[r, "U(\gamma)"] & U(x)
    \end{tikzcd}\]
where $U(y)$ is finitely presentable.
\item The factorisation in~(ii) is essentially unique, i.e., if $\beta'\colon a\to U(y)$ satisfies $U(\gamma) \circ \beta'=f$, then there exist $y'\in \EM(T)$ with $U(y')$ finitely presentable and morphisms 
\[\begin{tikzcd}
y \arrow{r}{\lambda} & y' \arrow{r}{\gamma'} & x
\end{tikzcd}\]
in $\EM(T)$ such that $U(\lambda)\circ \beta=U(\lambda)\circ \beta'$ and $\gamma=\gamma'\circ \lambda$.
\end{enumerate} 
\end{definition}

\begin{remark}\label{rem:item-iii-monic}
Note that, if the morphism $U(\gamma)$ in item~(ii) is a monomorphism, then item~(iii) is automatically satisfied.
\end{remark}

Comonads of finite rank were introduced by Diers in \cite[Definition~1.12.0]{Diers1986}, where item~(iii) in Definition~\ref{def:finite-rank} was omitted. This condition of essential uniqueness is needed to prove Theorem~\ref{t:EM-lfp} below, which appears as Proposition~1.12.1 in \emph{op.\ cit}. To remedy this omission, we offer a complete proof of the following result:

\begin{theorem}\label{t:EM-lfp}
Let $T$ be a comonad on a locally finitely presentable category $\A$. If $T$ is of finite rank then the Eilenberg-Moore category $\EM(T)$ is locally finitely presentable and the forgetful functor $U\colon \EM(T)\to\A$ preserves and reflects finitely presentable objects.
\end{theorem}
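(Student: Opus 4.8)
The plan is to verify the three defining properties of a locally finitely presentable category for $\EM(T)$, and to obtain the statement about $U$ from Lemma~\ref{l:preserves-reflects-fp-objects}\ref{forgetful-fp} (which applies, since condition~(i) of Definition~\ref{def:finite-rank} makes $T$ finitary). The easy ingredients are the following. The forgetful functor $U\colon\EM(T)\to\A$ is faithful and creates all colimits: given a diagram of $T$-coalgebras, its colimit in $\A$ carries the unique coalgebra structure compatible with the colimit cocone, and this is the colimit in $\EM(T)$ (dually to the fact that the forgetful functor from the algebras of a monad creates limits). Hence $\EM(T)$ is cocomplete. By Lemma~\ref{l:preserves-reflects-fp-objects}\ref{forgetful-fp}, $U$ preserves and reflects finitely presentable objects, so a coalgebra $y$ is finitely presentable exactly when $U(y)$ is; this is the second assertion of the theorem, and it also shows that $\EM(T)$ has only a set of finitely presentable objects up to isomorphism, because $\A$ has only a set of finitely presentable objects and, by local smallness, only a set of coalgebra structures on each.

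The substantive step is that every $x\in\EM(T)$ is a directed (equivalently, filtered) colimit of finitely presentable coalgebras. Let $\mathscr{D}_x$ be the comma category whose objects are the pairs $(y,\gamma)$ with $\gamma\colon y\to x$ a coalgebra morphism and $U(y)$ finitely presentable, and whose morphisms are the commuting triangles over $x$; it is essentially small by the previous paragraph. I would first show that $\mathscr{D}_x$ is filtered: non-emptiness comes from item~(ii) of Definition~\ref{def:finite-rank} applied to a morphism $a\to U(x)$ with $a$ finitely presentable (say, out of the initial object); a common cocone over $(y_1,\gamma_1),(y_2,\gamma_2)$ is supplied by $(y_1+y_2,[\gamma_1,\gamma_2])$, whose underlying object $U(y_1)+U(y_2)$ is finitely presentable; and for parallel morphisms $\phi_1,\phi_2\colon(y_1,\gamma_1)\to(y_2,\gamma_2)$ --- where one \emph{cannot} simply form the coequaliser, since a coequaliser of a finitely presentable object need not be finitely presentable --- the essential-uniqueness clause~(iii), applied with $a\coloneqq U(y_1)$, $\gamma\coloneqq\gamma_2$, $f\coloneqq U(\gamma_1)$, $\beta\coloneqq U(\phi_1)$, $\beta'\coloneqq U(\phi_2)$, yields $\lambda\colon y_2\to y'$ with $U(y')$ finitely presentable and $\gamma'\colon y'\to x$ such that $U(\lambda)U(\phi_1)=U(\lambda)U(\phi_2)$ and $\gamma'\lambda=\gamma_2$; faithfulness of $U$ then makes $\lambda$ a coequaliser of $\phi_1,\phi_2$ in $\mathscr{D}_x$.

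Next I would identify the colimit. Since $U$ creates filtered colimits and the cocone $\{U(\gamma)\}_{(y,\gamma)\in\mathscr{D}_x}$ lifts to $\{\gamma\}$, it is enough to show that $U(x)$ together with $\{U(\gamma)\}$ is the colimit of $(y,\gamma)\mapsto U(y)$. For this I would prove that the functor $V\colon\mathscr{D}_x\to\A_{\fp}\down U(x)$, $(y,\gamma)\mapsto(U(y),U(\gamma))$, is cofinal, so that the colimit in question coincides with $\colim\bigl(\A_{\fp}\down U(x)\to\A\bigr)=U(x)$, the last equality being part of the local finite presentability of $\A$. Non-emptiness of a comma category $(a,\nu)\down V$ is once more item~(ii) of Definition~\ref{def:finite-rank}; and given two of its objects one uses the already established filteredness of $\mathscr{D}_x$ to map the two coalgebras to a common $(y_3,\gamma_3)$ over $x$, and then applies item~(iii) to the two resulting morphisms $a\to U(y_3)$ (which both lie over $\nu$) to land them in a single object of $(a,\nu)\down V$; hence these comma categories are connected, $V$ is cofinal, and the theorem follows.

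The step I expect to be the main obstacle is the correct deployment of item~(iii) of Definition~\ref{def:finite-rank}: on its face it only relates two factorisations through the \emph{same} intermediate coalgebra, so to make it serve both as a replacement for a coequaliser (in the filteredness argument) and as a way of merging two lifts over a fixed morphism (in the cofinality argument) one must first manufacture a common intermediate coalgebra --- via a binary coproduct in the first case, via filteredness of $\mathscr{D}_x$ in the second. The remaining points (creation of colimits by $U$, closure of finitely presentable objects under finite coproducts, faithfulness of $U$, and the routine manipulation of cofinal functors) should present no difficulty.
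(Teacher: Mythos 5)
Your proof follows essentially the same route as the paper's: cocompleteness of $\EM(T)$ via creation of colimits by $U$, preservation and reflection of finitely presentable objects via Lemma~\ref{l:preserves-reflects-fp-objects}, and then cofinality of the comparison functor $\EM(T)_{\fp}\down x\to\A_{\fp}\down U(x)$, established exactly as in the paper from items~(ii) and~(iii) of Definition~\ref{def:finite-rank} (item~(ii) for non-emptiness, filteredness plus item~(iii) for connectedness). The only slip is the parenthetical claim that a coequaliser of finitely presentable objects need not be finitely presentable --- in fact finitely presentable objects are closed under all finite colimits in any category, so the coequaliser would also have served in the filteredness step --- but since you route around this via item~(iii), which works, the argument is unaffected.
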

\begin{proof}
Note that $\EM(T)$ is cocomplete because $\A$ is cocomplete and $U$ creates colimits (cf.\ \cite[Proposition~20.12(10)]{AHS1990} for a proof of the dual fact).
Further, $U$ preserves and reflects finitely presentable objects by Lemma~\ref{l:preserves-reflects-fp-objects}\ref{forgetful-fp}. Since in $\A$ there is, up to isomorphism, only a set of finitely presentable objects, the same holds in $\EM(T)$. It remains to prove that every object of $\EM(T)$ is a directed colimit of finitely presentable objects. 

We will show that each $(a,g)\in\EM(T)$ is the colimit of the canonical (directed) diagram given by the forgetful functor 
\[
D\colon\EM(T)_{\fp}\down(a,g)\to \EM(T),
\]
where $\EM(T)_{\fp}$ is the full subcategory of $\EM(T)$ defined by the finitely presentable objects. 
Because $U$ reflects colimits, it is enough to show that $U(a,g)$ is the colimit of the diagram $U\circ D\colon \EM(T)_{\fp}\down(a,g)\to \A$. Further, since $U$ preserves finitely presentable objects, $U\circ D$ factors as
\[\begin{tikzcd}
\EM(T)_{\fp}\down(a,g) \arrow[rr, relay arrow=2ex, "U\circ D"] \arrow{r}{J} & \A_{\fp}\down U(a,g) \arrow{r}{D'} & \A
\end{tikzcd}\]
where $D'$ is the canonical diagram associated with $U(a,g)$ and $J$ is the functor sending a morphism $(b_1,h_1)\to (b_2,h_2)$ in $\EM(T)_{\fp}\down(a,g)$ to its image under $U$. Because the colimit of $D'$ is $U(a,g)$, it suffices to show that the functor $J$ is cofinal. We prove that the following two properties hold, which in turn imply that $J$ is cofinal (cf.\ \cite[Proposition~2.11.2]{Borceux1}):
\begin{itemize}[leftmargin=2em]
\item For all $c\in \A_{\fp}\down U(a,g)$ there is $c\to J(d)$ with $d\in \EM(T)_{\fp}\down(a,g)$.
\item For any two morphisms $\beta_1\colon c\to J(d_1)$ and $\beta_2\colon c\to J(d_2)$ with $c\in \A_{\fp}\down U(a,g)$ and $d_1,d_2\in \EM(T)_{\fp}\down(a,g)$, there are $d\in\EM(T)_{\fp}\down(a,g)$ and morphisms $\phi_1\colon d_1\to d$, $\phi_2\colon d_2\to d$ in $\EM(T)_{\fp}\down(a,g)$ satisfying $J(\phi_1)\circ \beta_1=J(\phi_2)\circ \beta_2$.
\end{itemize}

The first property is precisely item~(ii) in the definition of a comonad of finite rank. 
For the second one, consider any two objects $\gamma_1\colon (b_1,h_1)\to (a,g)$ and $\gamma_2\colon (b_2,h_2)\to (a,g)$ of $\EM(T)_{\fp}\down(a,g)$, an object $\alpha\colon c\to U(a,g)$ of $\A_{\fp}\down U(a,g)$, and arrows $\beta_1\colon c\to U(b_1,h_1)$ and $\beta_2\colon c\to U(b_2,h_2)$ such that 
\[
U(\gamma_1)\circ \beta_1=\alpha=U(\gamma_2)\circ \beta_2.
\]
Since $D$ is directed, there exist an object $\gamma\colon (b,h) \to (a,g)$ in $\EM(T)_{\fp}\down(a,g)$ and morphisms $\psi_1\colon (b_1,h_1)\to (b,h)$ and $\psi_2\colon (b_2,h_2)\to (b,h)$ in $\EM(T)$ such that $\gamma\circ \psi_1=\gamma_1$ and $\gamma\circ \psi_2=\gamma_2$. 
It follows that 
\[
U(\gamma)\circ U(\psi_1)\circ \beta_1 = U(\gamma_1)\circ \beta_1 = U(\gamma_2)\circ \beta_2 = U(\gamma)\circ U(\psi_2)\circ \beta_2.
\]
By item~(iii) in the definition of a comonad of finite rank, and the fact that $U$ reflects finitely presentable objects, there exist $\gamma'\colon (b',h')\to (a,g)$ in $\EM(T)_{\fp}\down(a,g)$ and $\lambda\in\hom_{\EM(T)}((b,h),(b',h'))$ satisfying $\gamma=\gamma'\circ \lambda$ and
\[
U(\lambda)\circ U(\psi_1)\circ \beta_1=U(\lambda)\circ U(\psi_2)\circ \beta_2.
\]
Therefore, the morphisms $\phi_1\coloneqq \lambda\circ \psi_1$ and $\phi_2\coloneqq \lambda\circ \psi_2$ satisfy $J(\phi_1)\circ \beta_1=J(\phi_2)\circ \beta_2$ as desired.
\end{proof}

\begin{corollary}\label{cor:fp-equal-fg}
Let $T$ be a comonad of finite rank on a locally finitely presentable category $\A$. If every finitely generated object in $\A$ is finitely presentable, then the same holds in $\EM(T)$.
\end{corollary}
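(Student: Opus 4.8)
The plan is to transport the statement across the forgetful functor $U\colon\EM(T)\to\A$, exploiting the characterisation of finitely generated objects in a locally finitely presentable category as the strong quotients of finitely presentable ones (this is exactly the fact recalled at the beginning of Section~\ref{s:lfp}, \cite[Propositions~1.61 and~1.69(ii)]{AR1994}). Concretely, let $x\in\EM(T)$ be finitely generated. Since $T$ is of finite rank, Theorem~\ref{t:EM-lfp} tells us that $\EM(T)$ is locally finitely presentable, so there are a finitely presentable object $y\in\EM(T)$ and a strong epimorphism $q\colon y\epi x$. Applying $U$ yields a morphism $U(q)\colon U(y)\to U(x)$ in $\A$ with $U(y)$ finitely presentable, because $U$ preserves finitely presentable objects by Theorem~\ref{t:EM-lfp}.

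The crucial point is that $U(q)$ is again a \emph{strong} epimorphism in $\A$. This is an instance of the general fact that any left adjoint preserves strong epimorphisms: given a commutative square in $\A$ with $U(q)$ along the top and a monomorphism $m$ along the bottom, transpose it along the adjunction $U\dashv F$ to a commutative square with $q$ along the top and $F(m)$ along the bottom; as $F$ is a right adjoint it preserves monomorphisms, so $F(m)$ is a monomorphism, and the strongness of $q$ provides a diagonal filler, which transposes back to a diagonal filler for the original square. Since $U$ also preserves epimorphisms (being a left adjoint it preserves all colimits), $U(q)$ is a strong epimorphism.

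Consequently $U(x)$ is a strong quotient of the finitely presentable object $U(y)$, hence $U(x)$ is finitely generated in $\A$. By hypothesis every finitely generated object of $\A$ is finitely presentable, so $U(x)$ is finitely presentable. Finally, $U$ reflects finitely presentable objects (again Theorem~\ref{t:EM-lfp}), whence $x$ is finitely presentable in $\EM(T)$, which is what we wanted (the reverse implication, that finitely presentable objects are finitely generated, holds in any category).

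The step I expect to require the most care is the claim that $U$ preserves strong epimorphisms, or rather the justification that this is the right tool: the more direct route, using the natural isomorphism $\hom_{\A}(U(x),-)\cong\hom_{\EM(T)}(x,F(-))$ together with the fact that $F$ preserves directed colimits (Lemma~\ref{l:preserves-reflects-fp-objects}\ref{co-free-directed-colim}), would show that $U$ preserves finitely generated objects \emph{only if} $F$ sent directed colimits of monomorphisms to directed colimits of monomorphisms, i.e.\ only if $T$ preserved monomorphisms -- which is not part of the definition of a comonad of finite rank. Passing through the strong-quotient description of finitely generated objects sidesteps this assumption. Apart from that, the only thing to keep in mind is that the argument leans entirely on Theorem~\ref{t:EM-lfp}, so it genuinely uses the finite-rank hypothesis and not merely finitariness of $T$.
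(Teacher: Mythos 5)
Your proof is correct and follows essentially the same route as the paper's: both write the finitely generated coalgebra as a strong quotient of a finitely presentable one, transport this along $U$ using the fact that left adjoints preserve strong epimorphisms (which the paper cites from Borceux and you verify directly by transposing the lifting square), and then reflect finite presentability back through $U$ via Theorem~\ref{t:EM-lfp}. No gaps; your closing remark about why the strong-quotient characterisation is the right tool is a sensible observation but not needed for the argument.
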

\begin{proof}
Suppose that every finitely generated object in $\A$ is finitely presentable, and let $x$ be an arbitrary finitely generated object of $\EM(T)$. By Theorem~\ref{t:EM-lfp}, the category $\EM(T)$ is locally finitely presentable and the forgetful functor $U\colon \EM(T)\to\A$ preserves and reflects finitely presentable objects. Thus there is a strong epimorphism $f\colon y\epi x$ in $\EM(T)$ with $y$ finitely presentable. As left adjoint functors preserve strong epimorphisms (see e.g.\ \cite[Proposition~4.3.9]{Borceux1}), $U(f)$ is a strong epimorphism in $\A$. Since $U$ preserves finitely presentable objects, it follows that $U(x)$ is finitely generated in $\A$ and thus finitely presentable. Using the fact that $U$ reflects finitely presentable objects, we see that $x$ is finitely presentable.
\end{proof}

\addtocontents{toc}{\protect\setcounter{tocdepth}{1}}
\vspace{0.5em}
\subsection*{Acknowledgements} We are grateful to Samson Abramsky and Tom\'{a}\v{s} Jakl for several useful comments on a preliminary version of this paper that helped us to improve the presentation of our results.


\Urlmuskip=0mu plus 1mu\relax
\bibliographystyle{amsplain-nodash}

\providecommand{\bysame}{\leavevmode\hbox to3em{\hrulefill}\thinspace}
\providecommand{\MR}{\relax\ifhmode\unskip\space\fi MR }
\providecommand{\MRhref}[2]{%
  \href{http://www.ams.org/mathscinet-getitem?mr=#1}{#2}
}
\providecommand{\href}[2]{#2}

\end{document}